\documentclass{article}

\usepackage{graphicx} 
\usepackage{amsmath,amsthm,verbatim,amssymb,amsfonts,amscd, graphicx}
\usepackage[utf8]{inputenc}
\usepackage{graphics}
\usepackage{hyperref}
\usepackage[baseline]{euflag}
\usepackage{enumitem}
\usepackage{apptools}
\usepackage{titlesec}
\usepackage{appendix}
\usepackage{xcolor}
\usepackage{dsfont}
\usepackage{appendix}

\title{Conditions for equality and stability in Shannon's and Tao's entropy power inequalities}

\author{Lampros Gavalakis%
	\thanks{Statistical Laboratory, DPMMS,
	University of Cambridge,
	Centre for Mathematical Sciences,
        Wilberforce Road,
	Cambridge CB3 0WB, U.K.
        Email: {\tt lg560@cam.ac.uk}
	}
\and 
Ioannis Kontoyiannis%
	\thanks{Statistical Laboratory, DPMMS,
	University of Cambridge,
	Centre for Mathematical Sciences,
        Wilberforce Road,
	Cambridge CB3 0WB, U.K.
        Email: {\tt yiannis@maths.cam.ac.uk}.
}
}

\date{May 2025}

\topmargin0.0cm
\headheight0.0cm
\headsep0.0cm
\oddsidemargin0.0cm
\textheight23.0cm
\textwidth16.5cm
\footskip1.0cm

\theoremstyle{plain}

\newtheorem{theorem}{Theorem}
\newtheorem{corollary}[theorem]{Corollary}
\newtheorem{lemma}[theorem]{Lemma}
\newtheorem{proposition}[theorem]{Proposition}

\newtheorem{remark}[theorem]{Remark}
 
\newcommand{\SI}{49\sigma+21}

\newcommand{\R}{\mathbb{R}}

\renewcommand{\ref}[1]{(\ref{#1})}

\newcommand{\Is}{{\rm Is}}

\newcommand{\gnt}{g_{t_0}^{(n)}}

\newcommand{\Var}[1] {
\mathrm{Var}#1
}

\newcommand{\Z}{
\mathbb{Z}
}

\newcommand{\depi}[1] {
\delta_{\mathrm{EPI},#1}
}
\newcommand{\Xt}{
\tilde{X}^t
}
\newcommand{\Yt}{
\tilde{Y}^t
}

\newcommand{\essinf}{\mathop{\rm ess\, inf}}
\newcommand{\f} {f_{X+U}}
\newcommand{\F}{F_{X+U}}
\newcommand{\xmp}{x_m^{(p)}}

\newcommand{\Xp} {
X^{\prime}
}
\newcommand{\be}{\begin{equation}}
\newcommand{\ee}{\end{equation}}
\newcommand{\Xl}{\sqrt{\lambda} X_1 + \sqrt{1-\lambda} X_2}
\newcommand{\Yl}{\sqrt{\lambda} \tilde{X}^t_1 + \sqrt{1-\lambda} 
\tilde{X}^t_2}

\begin{document}

\maketitle

\begin{abstract}
We show that there is equality in Shannon's Entropy Power Inequality (EPI) if and only if the random variables 
involved are Gaussian, assuming nothing beyond the existence of differential entropies. This is done by 
justifying de Bruijn's identity without a second moment assumption. Part of the proof also relies on a
re-examination of an example of Bobkov and Chistyakov (2015), which shows that there exists a random variable $X$ with finite differential entropy $h(X),$ such that $h(X+Y) = \infty$ for any independent random variable $Y$ with finite entropy. We prove that either $X$ has this property, or $h(X+Y)$ is finite for any independent $Y$ that does not have this property. Using this, 
we prove the continuity of  $t \mapsto h(X+\sqrt{t}Z)$ at $t=0$, where $Z \sim \mathcal{N}(0,1)$ is independent of $X$, under minimal assumptions. We then establish two stability results: A qualitative stability result for Shannon's EPI in terms of weak convergence under very mild moment conditions, and a quantitative stability result in Tao's discrete analogue of the EPI under log-concavity. The proof for the first stability result is based on a compactness argument, 
while the proof of the second uses the Cheeger inequality and leverages concentration properties of discrete log-concave distributions. 
\end{abstract}

\footnotetext{This work was supported in part by the EPSRC-funded INFORMED-AI project EP/Y028732/1.}

\section{Introduction and main results}
\subsection{Shannon's Entropy Power Inequality}
One of the equivalent formulations of Shannon's celebrated entropy power inequality (EPI) states that, for any pair of independent random variables $X,Y$ in $\mathbb{R}$ and any $\lambda \in (0,1),$
\begin{equation} \label{EPI}
h(\sqrt{\lambda}X + \sqrt{1-\lambda}Y) \geq \lambda h(X) + (1-\lambda)h(Y)
\end{equation}
where $h(X)$ denotes the (differential) entropy of a random variable $X$ having density $f$, 
\begin{equation}
h(X) =h(f)= -\int_{\mathbb{R}}{f(x)\log{f(x)}dx},
\label{eq:h}
\end{equation}
with the  convention that we set $h(X) = -\infty$ whenever the integral in~(\ref{eq:h}) does not exist, including in the case when $X$ does not have a density with respect to Lebesgue measure on $\R$. In particular, when we say that the differential entropy $h(X)$ of some random variable $X$ is finite, we mean that the integral exists and $-\infty<h(X)<\infty $, while ``$h(X) < \infty$" 
also includes the cases when the integral does not exist and when it does exist and equals $-\infty$.
[Logarithms are natural logarithms throughout.]

The first proof of the EPI, after it was stated in Shannon's paper \cite{shannon:48}, was given by Stam \cite{stam:59} in 1959 and made use of de Bruijn's identity,
\be \label{debruijn}
\frac{d}{dt}h(X+\sqrt{t}Z) = \frac{1}{2}I(X+\sqrt{t}Z), \quad t>0.
\ee
Here, $Z$ is a standard Gaussian independent of $X$, and $I(X)$ denotes the Fisher information of $X$,
where
\be \label{FIdef}
I(X) = \int_{\R}{\frac{\bigl(f^{\prime}(x)\bigr)^2}{f(x)}dx},
\ee
whenever $X$ has an absolutely continuous density $f$;
otherwise, we  set $I(X) = \infty$, c.f.~\cite{bobkov:22}. 

Stam's proof was revisited by Blachman~\cite{blachman:65} and it was also presented in the review of Dembo, Cover and Thomas \cite{dembo-cover-thomas:91}.
Using de Bruijn's identity \eqref{debruijn}, the EPI is derived as a consequence of the convolution inequality for Fisher information, also known as Stam's inequality \cite{blachman:65,stam:59}:
$$
I(X+Y)^{-1} \geq I(X)^{-1}+I(Y)^{-1},
$$
where $X,Y$ are arbitrary independent random variables. 
The proof of de Bruijn's identity \eqref{debruijn} requires justification of exchange of differentiation and expectation.
As has been pointed out before \cite{klartag:25, rioul:17}, this was only  done in 1984 by Barron \cite{barron:cltTR}, under the assumption that $X$ has finite variance.  Around the same time, the Bakry-Em{\'e}ry theory was developed \cite{bakry-emery:85}, where a related representation for the time derivative of the relative entropy $D(P_t\|Q)$ between the time-$t$ distribution $P_t$ of a diffusion and its invariant measure $Q$ was obtained. 
A proof of the EPI using de Bruijn’s identity also appeared in a slightly more general form in Carlen and Soffer \cite{carlen:91}, where a qualitative stability result is obtained as well. We will discuss this in more detail in Section \ref{introstabilitysec}. More recently, another proof of the EPI in the same spirit, using a closely related derivative identity for the mutual information, was given by Verd\'{u} and Guo~\cite{verdu:06}, again assuming finite variance.

One of the main contributions of the present work is the justification of de Bruijn's identity~\eqref{debruijn} under minimal assumptions and in particular under no moment conditions, see Theorem~\ref{ourdebruijn}. 

Another part of Stam's proof which also uses the finite variance assumption, is the continuity of $t \mapsto h(X+\sqrt{t}Z)$ at $t=0$. We will show in Theorem \ref{entropycontTh} that this can be justified without moment conditions but, as we will see, this continuity fails without mild additional assumptions.

A different classical proof of the EPI is due to Lieb \cite{lieb:78} (see also \cite{dembo-cover-thomas:91}), using the sharp form of Young's inequality, which yields a family of inequalities for R{\'e}nyi entropies. However, this proof requires integrability of some power of the densities, i.e., $\int{f^{\alpha}} < \infty$ for some $\alpha>1$, in order to obtain the EPI for differential entropy in the limit as $\alpha \to 1$. One can remove this condition to obtain the EPI for any pair of random variables using a truncation argument as in Bobkov and Chistyakov \cite{bobkov:15}. But because of this truncation, this method of proof does not settle the equality case without the above integrability assumption. 

A number of other proofs of the EPI have appeared in the literature, under 
a variety of different assumptions. A simple proof, which also generalizes to give entropy monotonicity along the central limit theorem (CLT), is due to Madiman and Barron \cite{madiman:07}, under the finite variance assumption. Several different proofs 
were proposed by Rioul \cite{rioul:07, rioul:11, rioul:17b,rioul:17}. 
In particular, the equality case is settled in \cite{rioul:17}, 
assuming (among other conditions) that the densities of $X$ and $Y$ 
are everywhere positive and continuous. 

Szarek and Voiculescu \cite{szarek:00} gave a nice, geometric and intuitive proof using the Brunn-Minkowski inequality and a rearrangement inequality by Brascamp, Lieb and Luttinger. Due to the limiting nature of this argument, the equality case is not settled there either. 
Anantharam, Jog and Nair \cite{anantharam:22} obtained a proof the unifies the EPI and the Brascamp-Lieb inequality using a doubling trick, assuming finite second moments. 
Courtade \cite{courtade:16b} also provided a simple proof for the monotonicity of entropy along the CLT using the maximal correlation, assuming finite variances and smooth densities. 
Finally, a recent, remarkable proof using the F{\"o}llmer process is due to Lehec \cite{lehec:13}, which assumes finite second moments. A similar idea was used by Eldan and Mikulincer \cite{eldan:20} to obtain stability under log-concavity. We will revisit this topic in Section \ref{stabilitysec}. 

Since the proofs in the literature only settle the equality case of the EPI in restricted cases (assuming either finite second moments or under regularity assumptions on the densities of $X$ and $Y$), it is natural to ask: 
{\em Assuming only finiteness of $h(X)$ and $h(Y)$, does equality in the EPI \eqref{EPI} hold if and only if  $X$ and $Y$ are Gaussian?}
We answer this question in the affirmative, using our general version of de Bruijn's identity, a result which may also be of independent interest. 

Before stating this in Theorem~\ref{ourEPI} below, we note that, 
as Bobkov and Chistyakov \cite{bobkov:15} point out, one needs to be careful even when formulating the EPI \eqref{EPI}: there exist random variables $X,Y$ such that the integrals in the definitions of $h(X)$ and $h(Y)$ exist, while $h(X+Y)$ does not. Under the convention that $h(X) = -\infty$ when the integral does not exist,  \eqref{EPI} holds true for any pair of independent random variables $X,Y.$ 

Furthermore, Bobkov and Chistyakov showed that the following ``discontinuity'' phenomenon may occur \cite[Proposition V.8]{bobkov:15}:
\begin{proposition}[\cite{bobkov:15}] \label{bobkovCE}
There exists a random variable $X$ such that $-\infty < h(X) < \infty$, while $h(X+Y) = \infty$ for every independent random variable $Y$ with $h(Y) > -\infty$.
\end{proposition}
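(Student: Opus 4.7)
The strategy is to construct $X$ as an infinite mixture of uniform densities on a family of rapidly separated, tiny intervals. The mixture weights $c_n$ are chosen so that their discrete entropy $H(c):=-\sum_n c_n\log c_n$ is \emph{infinite}, while the interval widths $\delta_n$ are chosen so that the differential entropy $h(X)$ remains \emph{finite}. Convolving $X$ with any $Y$ of finite entropy cannot destroy the ``location information'' of $X$ among its components, so $h(X+Y)$ inherits the infinite location entropy.

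Concretely, I would pick probabilities $\{c_n\}_{n\ge 2}$ with $\sum_n c_n=1$ and $H(c)=\infty$, e.g.\ $c_n\propto 1/(n\log^2 n)$; a very rapidly growing sequence $a_n\to\infty$ (say $a_{n+1}\ge 2^{a_n}$); and widths $\delta_n=c_n e^{-k_n}$ with $k_n\ge 0$ and $\sum_n c_n k_n$ finite. Set $I_n=[a_n,a_n+\delta_n]$ and let $X$ have density $f=\sum_n (c_n/\delta_n)\mathbf{1}_{I_n}$. Disjointness of the $I_n$ immediately gives
\[
h(X) \;=\; -\sum_n c_n\log(c_n/\delta_n) \;=\; -\sum_n c_n k_n \;\in\;\mathbb{R}.
\]

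Now let $Y$ be independent with density $g$ and $h(Y)>-\infty$, and let $N$ denote the random mixture index ($\mathbb{P}(N=n)=c_n$). The chain rule yields
\[
h(X+Y) \;=\; h(X+Y\mid N) + I(N;\,X+Y) \;=\; \sum_n c_n\, h(g_n) + I(N;\,X+Y),
\]
where $g_n(z)=\delta_n^{-1}\int_{I_n}g(z-y)\,dy$ is a mild mollification of $g$ translated to lie near $a_n$. The conditional term is finite since $h(g_n)\to h(g)>-\infty$ as $\delta_n\to 0$. It therefore suffices to prove $I(N;X+Y)=\infty$. The intuition is that because $\{a_n\}$ grows super-exponentially, for any integrable $g$ the shifted densities $g(\,\cdot\,-a_n)$ concentrate in essentially disjoint windows around $a_n$, so $N$ is almost a function of $X+Y$; since $H(N)=H(c)=\infty$, this forces $I(N;X+Y)=\infty$. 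Quantitatively, I would use the variational identity $I(N;X+Y)=\sum_n c_n\, D(g_n\,\|\,\sum_m c_m g_m)$, restrict each integral to a ``typical'' window $R_n$ on which $g_n$ dominates, and lower bound $D(g_n\,\|\,\sum_m c_m g_m)$ by $(1-o(1))\log(1/c_n)$; summing against $c_n$ then yields the claimed divergence.

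The hard part is making the window/dominance argument work \emph{uniformly in $Y$}: for any pre-specified growth rate of $\{a_n\}$ one can concoct an admissible $g$ with heavy enough tails that the shifted copies overlap the neighboring windows, undermining a naive disjointness argument. This is overcome (as in the original construction of \cite{bobkov:15}) by a careful joint tuning of $c_n,\delta_n,a_n$ and by exploiting the full strength of $h(Y)>-\infty$ (which controls integrability of $g\log g$, not merely of $g$) to absorb the atypical-overlap contributions, so that the divergent term $H(c)=\infty$ remains the dominant one.
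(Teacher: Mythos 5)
The paper does not prove this proposition; it is cited directly from Bobkov and Chistyakov \cite[Proposition V.8]{bobkov:15}, so there is no in-paper proof to compare against. On its own terms, your construction --- an infinite mixture of uniform densities on tiny, widely-separated intervals $I_n$, with weights $c_n$ chosen so that $H(c)=-\sum_n c_n\log c_n$ diverges while $h(X)=-\sum_n c_n k_n$ stays finite --- is the right blueprint and is in the same spirit as \cite{bobkov:15}. The computation of $h(X)$ is correct, and the reduction via $h(X+Y)=h(X+Y\mid N)+I(N;X+Y)$ is the right one. (Also note $h(X+Y\mid N=n)=h(Y+U_n)\ge h(Y)>-\infty$ by the EPI, so the conditional term is automatically bounded below; claiming it is ``finite'' via $h(g_n)\to h(g)$ is both unproved and more than you need.)

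The genuine gap is the one you flag yourself: you do not prove that $I(N;X+Y)=\infty$ for \emph{every} admissible $Y$, you only describe what such a proof would have to look like and then defer the ``careful joint tuning'' of $c_n,\delta_n,a_n$ to \cite{bobkov:15}. Since $X$ must be fixed before $Y$ is, the entire content of the proposition lives in that uniformity, and it is not supplied here. Two further issues also need attention: (i) the identity $h(X+Y)=h(X+Y\mid N)+I(N;X+Y)$ is formally of the form $\infty-\infty$ when $H(N)=\infty$, so one should instead lower-bound the defining integral $-\int\bar g\log\bar g$ directly, with $\bar g=\sum_n c_n g_n$; and (ii) under the paper's sign conventions, concluding $h(X+Y)=+\infty$ requires also verifying that the integral \emph{exists}, i.e.\ that $\int_{\{\bar g>1\}}\bar g\log\bar g<\infty$, which is a real (if minor) check. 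As written, this is a sound plan but not a proof.
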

This example shows that $t \mapsto h(X+\sqrt{t}Z)$, where $Z$ is a standard Gaussian independent of $X$, need not be continuous at $t=0$. Establishing continuity at $t=0$ is a key step in Stam's proof of the EPI, and it fails when no conditions other than the existence of $h(X), h(Y)$ are imposed. For the same reason, de Bruin's identity \eqref{debruijn} may also fail, as its right-hand side is finite. 

Our first main contribution, Theorem \ref{Thtwopossibilities} below, shows that the family of counterexamples provided by Bobkov and Chistyakov above is the ``only" pathological case. 
For convenience, we first define the class of random variables having the property of the counterexample of Proposition \ref{bobkovCE}:
$$
\mathcal{C}_{\rm BC} := \{X: h(X) \text{ is finite and } h(X+Y) = \infty \text{ for any independent } Y \text{ with $h(Y) > -\infty$} \}.
$$
\begin{theorem} \label{Thtwopossibilities}
 Let $X$ be a random variable in $\mathbb{R}$ whose differential entropy exists and is finite. There are only two possibilities: 
 \begin{enumerate}
\item Either $X\in \mathcal{C}_{\rm BC}$, or 
\item $h(X+Y) < \infty$ for any independent random variable $Y \notin \mathcal{C}_{\rm BC}$ with $h(Y)<\infty$.
 \end{enumerate}
\end{theorem}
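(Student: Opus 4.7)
My approach is by contradiction. Suppose $X$ has finite differential entropy with $X\notin\mathcal{C}_{\rm BC}$, suppose $Y\notin\mathcal{C}_{\rm BC}$ has $h(Y)<\infty$, and suppose for contradiction that $h(X+Y)=\infty$. I first treat the case where $h(Y)$ is actually finite; the case $h(Y)=-\infty$ (where $Y$ need not have a density) requires a separate approximation argument at the end. By the definition of $\mathcal{C}_{\rm BC}$, there exist independent ``witness'' random variables $Y^*$ (for $X\notin\mathcal{C}_{\rm BC}$) and $X^*$ (for $Y\notin\mathcal{C}_{\rm BC}$), with $h(Y^*),\,h(X^*)>-\infty$ and with both $h(X+Y^*)$ and $h(Y+X^*)$ finite. (These witness entropies are in fact automatically finite, since for example $h(Y^*)\le h(X+Y^*)<\infty$ by the monotonicity of entropy under independent sums.) Enlarge the underlying probability space so that $X,Y,X^*,Y^*$ are mutually independent.

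\textbf{Central construction.} I then consider the auxiliary sum
\[
W \;:=\; X+Y+X^*+Y^* \;=\; (X+Y^*)+(Y+X^*) \;=\; (X+Y)+(X^*+Y^*).
\]
Using the monotonicity $h(A+B)\ge h(A)$ for independent $A,B$ with $h(B)>-\infty$, applied with $A=X+Y$ and $B=X^*+Y^*$ (for which $h(B)\ge h(X^*)>-\infty$), the second decomposition yields $h(W)\ge h(X+Y)=\infty$, so $h(W)=\infty$. The first decomposition, on the other hand, writes $W$ as the sum of two independent random variables $X+Y^*$ and $Y+X^*$, each of finite entropy and each of them itself a non-trivial independent convolution. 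The target contradiction is then $h(W)<\infty$.

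\textbf{Main obstacle.} The hard part is exactly this last step: showing that the sum of two independent random variables, each of which is a non-trivial convolution of a finite-entropy random variable with an independent random variable of entropy $>-\infty$, has finite entropy. This is essentially the structural content of the theorem. The route I would take is the one announced in the introduction: a careful re-examination of Bobkov and Chistyakov's Proposition V.8. Their counterexample realises a pathological $X\in\mathcal{C}_{\rm BC}$ as a mixture of very narrow ``spikes'' at a rapidly growing sequence of positions, with mixture weights $\{p_n\}$ satisfying $-\sum_n p_n\log p_n=\infty$ while the spike widths $\{\epsilon_n\}$ are tuned so that $h(X)$ remains finite. The key structural fact I need to establish is that a density of this ``$\mathcal{C}_{\rm BC}$-type'' cannot be written as $f*g$ for any density $g$ with $h(g)>-\infty$, because any such convolution smooths out the spikes irreversibly and destroys the infinite-discrete-entropy component. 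One then has to show that this non-pathological property propagates to the sum of two such convolutions, thereby giving $h(W)<\infty$ from the first decomposition. This stability lemma is the technical heart of the proof and, in my view, its main difficulty. The residual case $h(Y)=-\infty$ I would handle by approximating $Y$ by random variables with densities and invoking the case already proved, possibly via a limiting semicontinuity argument along Gaussian smoothings.
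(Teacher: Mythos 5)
Your proposal correctly sets up the problem and correctly identifies the "witness" random variables $X^*,Y^*$ whose existence is guaranteed by $X,Y\notin\mathcal{C}_{\rm BC}$, but it does not actually prove the theorem: you explicitly flag that "the main obstacle" — showing $h\bigl((X+Y^*)+(Y+X^*)\bigr)<\infty$ — is "the technical heart of the proof" and leave it open, merely sketching a speculative route via a structural re-examination of Bobkov--Chistyakov's spike construction. That is a genuine gap, not a minor omission. Moreover, your reformulation does not reduce the difficulty at all: you have rewritten the goal as "the sum of two independent finite-entropy random variables, each of which is a non-trivial convolution, has finite entropy," which is exactly the kind of statement the $\mathcal{C}_{\rm BC}$ pathology threatens. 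Without a concrete reason why $X+Y^*$ and $Y+X^*$ avoid the pathology, the four-fold decomposition is a relabeling rather than progress.

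The paper's route is substantially simpler and avoids any structural analysis of $\mathcal{C}_{\rm BC}$-type densities. The key workhorse is Lemma \ref{gaussfinite}: if $h(X)$ is finite and $h(X+W)<\infty$ for some independent finite-entropy $W$, then $h(X+V)<\infty$ for \emph{every} independent $V$ with finite variance (in particular, for a Gaussian $Z$). This is proved not by examining spikes, but by two short moves: (i) from the submodularity-for-sums inequality (Lemma \ref{km14Lemma}) one gets $h(X+X')<\infty$ for an i.i.d.\ copy $X'$; (ii) one truncates $X$ to a bounded random variable $X_n$ via the indicator $I_n=\mathbb{I}_{\{|X|\le n\}}$, uses $I(X;I_n)=H(I_n)<\infty$ to show $h(X_n)>-\infty$, uses conditioning-reduces-entropy to get $h(X_n+X')<\infty$, and finally applies submodularity again with the bounded (hence finite-variance) $X_n$ sitting in the middle to conclude $h(X'+V)\le h(X'+X_n)+h(X_n+V)-h(X_n)<\infty$. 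With Lemma \ref{gaussfinite} in hand, Theorem \ref{Thtwopossibilities} is one line: insert a standard Gaussian $Z$ between $X$ and $Y$ and apply submodularity,
\[
h(X+Y)\;\le\; h(X+Z+Y)\;\le\; h(X+Z)+h(Z+Y)-h(Z)\;<\;\infty,
\]
where the two entropies on the right are finite by Lemma \ref{gaussfinite} applied to $X$ and to $Y$ respectively. I would encourage you to abandon the structural analysis of the counterexample and instead look for a truncation argument that lets you reduce to the finite-variance case; the Gaussian-smoothing idea you mention at the very end for the residual case is in fact the right instinct, but it should be the centerpiece (via Lemma \ref{gaussfinite}), not a patch.
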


In addition, we show that $X \notin \mathcal{C}_{\rm BC}$ is enough to justify the continuity of entropy under Gaussian perturbation: 
\begin{theorem} [Entropy continuity under Gaussian perturbation] \label{entropycontTh}
Let $X$ be a random variable in $\R$ with finite differential entropy, for which there exists an independent random variable $Y$ with finite differential entropy such that $h(X+Y) < \infty$. 
Then, if $Z \sim \mathcal{N}(0,1)$ is independent of $X$,
$$
\lim_{t\downarrow 0}{h(X+\sqrt{t}Z)} = h(X). 
$$
\end{theorem}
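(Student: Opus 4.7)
The plan is to establish matching lower and upper bounds for $\lim_{t\downarrow 0} h(X+\sqrt{t}Z)$, both equal to $h(X)$.

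\emph{Lower bound and finiteness.} By the hypothesis together with Theorem \ref{Thtwopossibilities}, one has $X \notin \mathcal{C}_{\rm BC}$. Since any Gaussian is manifestly not in $\mathcal{C}_{\rm BC}$ (convolution with another Gaussian gives a Gaussian of finite entropy), Theorem \ref{Thtwopossibilities} yields $h(X+\sqrt{t}Z) < \infty$ for every $t > 0$. Shannon's EPI gives $h(X+\sqrt{t}Z) \geq \frac{1}{2}\log(e^{2h(X)} + 2\pi e t)$, which tends to $h(X)$ as $t \downarrow 0$, so $\liminf_{t\downarrow 0} h(X+\sqrt{t}Z) \geq h(X)$. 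Applying the same conditioning argument to the decomposition $X+\sqrt{t_2}Z_2 = (X+\sqrt{t_1}Z_1) + \sqrt{t_2-t_1}Z'$ for $t_1 < t_2$ shows that $t \mapsto h(X+\sqrt{t}Z)$ is non-decreasing, so the limit $L := \lim_{t\downarrow 0} h(X+\sqrt{t}Z)$ exists and lies in $[h(X),\infty)$.

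\emph{Upper bound.} Let $f_t$ denote the density of $X+\sqrt{t}Z$ and split $-h(f_t) = \int f_t \log^+ f_t - \int f_t \log^- f_t$. Since $u \mapsto u \log^+ u$ is convex on $[0,\infty)$, Jensen's inequality applied to the mixture $f_t(x) = \mathbb{E}[f(x-\sqrt{t}Z)]$ yields $\int f_t \log^+ f_t \leq \int f \log^+ f < \infty$; combined with a.e.\ convergence $f_{t_n} \to f$ on a subsequence (obtained from $L^1$-convergence of the Gaussian mollification) and Fatou's lemma, this gives $\int f_t \log^+ f_t \to \int f \log^+ f$. The proof reduces to showing the matching convergence $\int f_t \log^- f_t \to \int f \log^- f$, which together with the above yields $h(f_t) \to h(X)$.

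\emph{Main obstacle.} The $\log^-$ convergence is the crux, and here the Jensen bound goes the wrong way: the concavity of $u \mapsto u \log^- u$ on $[0,1]$ only yields $\int f_t \log^- f_t \geq \int f \log^- f$. This is precisely the step that fails for $X \in \mathcal{C}_{\rm BC}$ (where $\int f_t \log^- f_t = \infty$ produces the entropy jump), so the hypothesis $h(X+Y) < \infty$ must enter essentially. A natural route is to take the density $g$ of $X+Y$ as a reference, note that lower semi-continuity of relative entropy under $L^1$-convergence gives $\liminf D(f_t\|g) \geq D(f\|g)$, and combine this with the identity $-h(f_t) = \mathbb{E}_{f_t}[-\log g] - D(f_t\|g)$: the problem then reduces to showing $\mathbb{E}_{f_t}[-\log g] \to \mathbb{E}_f[-\log g]$, a uniform-integrability statement about $\{-\log g(X+\sqrt{t}Z)\}_{t\in(0,1]}$. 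I expect this to follow from Jensen-type bounds such as $-\log g(x) \leq \mathbb{E}_Y[-\log f(x-Y)]$ coupled with the finiteness of $h(X)$, $h(Y)$, and $h(X+Y)$. Carrying out this uniform-integrability bound is the main technical step.
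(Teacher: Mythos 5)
Your framing is sound as far as it goes: the lower bound is indeed easy (monotonicity or EPI), $\int f_t \log^+ f_t \to \int f \log^+ f$ does follow from the Jensen bound together with a.e.\ convergence and Fatou, and the whole content is concentrated in the $\log^-$ part where the hypothesis $h(X+Y)<\infty$ must enter. But you have not proved the theorem: you have correctly located the difficulty and then declared it a conjecture (``I expect this to follow from\dots'', ``Carrying out this uniform-integrability bound is the main technical step''). In addition, the sketch you do give has two concrete problems. First, the identity is misstated: it should read $h(f_t) = \mathbb{E}_{f_t}[-\log g] - D(f_t\|g)$, not $-h(f_t)$. The corrected identity still gives the right-direction combination (lower semicontinuity of $D$ plus convergence of the cross-entropy would indeed give $\limsup h(f_t) \le h(f)$), but the sign matters and you should check it. Second, and more seriously, for the identity to be usable you need $D(f_t\|g) < \infty$ and $\mathbb{E}_{f_t}[-\log g] < \infty$, where $g$ is the density of $X+Y$; nothing in the hypotheses guarantees this for your choice of $g$, and the Jensen bound $-\log g(x) \le \mathbb{E}_Y[-\log f(x-Y)]$ leads to $\int f_t(x)\, \mathbb{E}_Y[-\log f(x-Y)]\,dx$, a cross-entropy whose finiteness (let alone convergence as $t\downarrow 0$, uniformly enough) is exactly what needs proving. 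So the proposal does not eliminate the hard step; it relocates it to an unproved uniform-integrability claim.

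For comparison, the paper's proof avoids the reference-measure route entirely. It truncates $X$ to $\{|X|\le n\}$, uses the mutual-information bound $0 \le I(X+\sqrt{t}Z;\mathbb{I}_{\{|X|\le n\}}) \le H(\mathbb{I}_{\{|X|\le n\}}) \to 0$ uniformly in $t$ to reduce to the conditional entropy $h(X+\sqrt{t}Z\mid \mathbb{I}_{\{|X|\le n\}})$, then controls the contribution from $\{|X|>n\}$ via Lemma~\ref{truncationlemma} (a careful monotone/bounded convergence argument that uses $h(X+\sqrt{t_0}Z) < \infty$, itself obtained from the hypothesis via Lemma~\ref{gaussfinite}), and finally invokes Barron's finite-variance continuity argument for the bounded truncated piece. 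That is: the hypothesis $h(X+Y)<\infty$ enters through a sandwich of conditional entropies, not through a reference density $g$. Your approach, if completed, would be a genuinely different proof, but as it stands there is a gap at precisely the point you flag.
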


The proofs, given in Section \ref{debruijnsec}, exploit a submodularity-for-sums inequality for entropy and use a truncation argument, together with some uniform estimates on the mutual information, and a few careful applications of dominated convergence. 

Our next goal is to derive de Bruijn's identity \eqref{debruijn} under the weakest possible conditions. As mentioned, we see from Proposition \ref{bobkovCE} that the existence and finiteness of $h(X)$ is not enough. Nevertheless, we show:
\begin{theorem}[De Bruijn's identity without finite variance] \label{ourdebruijn}
Let $X$ be a random variable in $\R$ with $h(X) < \infty$, for which there exists an independent random variable $Y$ with finite differential entropy such that $h(X+Y)$ exists and is finite. 
Then, if $Z \sim \mathcal{N}(0,1)$ is independent of $X$,
$$\frac{d}{dt}h(X+\sqrt{t}Z) = \frac{1}{2}I(X+\sqrt{t}Z),\quad t>0.$$
\end{theorem}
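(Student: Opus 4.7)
The plan is to first verify that both sides of the identity make sense for $t > 0$, and then perform the classical heat-equation calculation, paying careful attention to the limit-exchange steps that previously required a second-moment assumption.

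\emph{Meaningfulness.} The hypothesis provides an independent $Y$ with $h(Y)$ and $h(X+Y)$ both finite, so $X \notin \mathcal{C}_{\rm BC}$. The Gaussian $\sqrt{t}Z$ is not in $\mathcal{C}_{\rm BC}$ either (for instance, convolving $\sqrt{t}Z$ with a uniform on $[0,1]$ gives a random variable of finite variance and hence finite entropy), so Theorem \ref{Thtwopossibilities} applied to $X$ with witness $\sqrt{t}Z$ yields $h(X + \sqrt{t}Z) < \infty$ for every $t > 0$. Combined with the standard lower bound $h(X+\sqrt{t}Z) \ge h(\sqrt{t}Z \mid X) = \tfrac{1}{2}\log(2\pi e t) > -\infty$, the function $t \mapsto h(X + \sqrt{t}Z)$ is real-valued on $(0, \infty)$, and the right-hand side is finite there as well (e.g.\ via the standard bound $I(X + \sqrt{t}Z) \le 1/t$).

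\emph{Heat-equation calculation.} Fix $t_0 > 0$ and write $f_t$ for the density of $X + \sqrt{t}Z = \phi_t \ast \mu_X$, where $\phi_t$ is the Gaussian density. For $t$ in any compact subinterval of $(0, \infty)$, $f_t$ is $C^\infty$ in $(t,x)$, the derivatives $\partial_x^k f_t$ and $\partial_t f_t$ are uniformly bounded in $x$, and the heat equation $\partial_t f_t = \tfrac{1}{2}\partial_x^2 f_t$ holds pointwise. The formal calculation is
\[
\frac{d}{dt} h(X + \sqrt{t}Z) \;=\; -\int \partial_t f_t\,(1 + \log f_t)\, dx \;=\; -\tfrac{1}{2} \int \partial_x^2 f_t\,(1 + \log f_t) \, dx \;=\; \tfrac{1}{2} \int \frac{(\partial_x f_t)^2}{f_t}\, dx,
\]
which equals $\tfrac{1}{2} I(X + \sqrt{t}Z)$ by the definition of Fisher information. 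Here the first equality is differentiation under the integral, and the third is integration by parts.

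\emph{Main obstacle.} Justifying these two exchanges is the technical heart of the proof. For differentiation under the integral at $t_0$, I would exhibit a $t$-uniform integrable majorant of $|\partial_t f_t\,(1 + \log f_t)|$ on a neighbourhood $[t_0/2,\,2t_0]$: since $\partial_t f_t$ is bounded in $x$, the task reduces to controlling the factor $|\log f_t|$ in the tail region $\{f_t \le \epsilon\}$, which can be handled using the integrability of $-f_t \log f_t$ (from the first step) together with the Gaussian upper bounds on $f_t$. For the integration by parts, performed on $[-R,R]$ and then letting $R \to \infty$, the boundary term $[\partial_x f_t(x)(1 + \log f_t(x))]_{-R}^{R}$ must vanish along a subsequence; since $\int (\partial_x f_t)^2 / f_t\, dx = I(X + \sqrt{t}Z) < \infty$ and the left-hand integral converges absolutely once the first exchange is justified, the boundary terms have a limit, which an additional absolute-integrability argument forces to be $0$. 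These tail estimates, leveraging the Gaussian convolution structure together with finiteness of $h(X + \sqrt{t}Z)$ from the first step, form the main technical difficulty; once they are in place, the identity follows immediately from the formal calculation.
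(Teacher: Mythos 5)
Your plan is, at the top level, the same as the paper's: establish that $h(X+\sqrt{t}Z)$ is finite for all $t>0$ using the $\mathcal{C}_{\rm BC}$ machinery, then run the classical Barron/Stam calculation (differentiate under the integral, apply the heat equation, integrate by parts), replacing the finite-variance hypothesis by $h(X+\sqrt{t_0}Z)<\infty$ in the dominated-convergence step. The paper packages the last two steps separately (Lemmas \ref{lemmaexchange} and \ref{lemmabarronFI}), but the chain of equalities is the same.

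The gap is in your ``main obstacle'' paragraph, which is the only nontrivial part of the theorem. You write that since $\partial_t f_t$ is bounded in $x$, ``the task reduces to controlling $|\log f_t|$ in the tail region $\{f_t\le\epsilon\}$,'' using the integrability of $-f_t\log f_t$. This reduction is not correct as stated: boundedness of $\partial_t f_t$ alone gives no decay, $|\log f_t|$ by itself is never integrable, and the function multiplying $\log f_t$ in the integrand is $\partial_t f_t$, not $f_t$, so $h(X+\sqrt{t}Z)<\infty$ does not directly help. What is actually needed is first the pointwise decay estimate $|\partial_t f_t(z)|\le C_{a,t_0}\,f_{t_0}(z)$ for $a<t<t_0/2$ (Barron's Eq.~(6.12)), and then --- because this bound involves $f_{t_0}$ while the logarithm is of $f_t$, which is sandwiched between constant multiples of $f_a$ and $f_{t_0}$ --- an estimate of the cross term $\int f_{t_0}\,|\log(f_{t_0}/f_a)|$. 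That cross term is \emph{not} a consequence of $h(X+\sqrt{t_0}Z)<\infty$; the paper controls it by the data-processing inequality for relative entropy,
\[
D(f_{t_0}\|f_a)=D(X+\sqrt{t_0}Z\,\|\,X+\sqrt{a}Z)\le D(\sqrt{t_0}Z\,\|\,\sqrt{a}Z)<\infty,
\]
and this is exactly the ingredient that lets one dispense with the second-moment assumption. Your sketch, as written, doesn't anticipate this step, and without it the integrable majorant does not close. Once that estimate is in place, the remainder of your calculation (heat equation, integration by parts with vanishing boundary terms) goes through as you indicate, matching \cite[Lemma 6.4]{barron:cltTR}.
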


Theorem~\ref{ourdebruijn} is proved in Section \ref{debruijnsec}.
The key idea is to replace the step in the proof of Barron \cite{barron:cltTR} where the finite variance assumption is used, with the finiteness of $h(X+\sqrt{t_0}Z)$ for some $t_0>0$ (see Lemma \ref{lemmaexchange}), which is a consequence of the submodularity-for-sums inequality for entropy. A data processing argument together with an application of dominated convergence complete the proof. 

Thus, we are able to prove the following general version of the EPI. We require no assumptions for the EPI to hold and we settle the equality case under the minimal assumption that the entropies exist and are finite. Clearly, if both sides are infinite, equality can be achieved trivially without the random variables being Gaussian. 
\begin{theorem}[EPI] \label{ourEPI}
Let $X,Y$ be independent random variables on $\R$. Then, for any $\lambda\in(0,1),$
\begin{equation} 
h(\sqrt{\lambda}X + \sqrt{1-\lambda}Y) \geq \lambda h(X) + (1-\lambda)h(Y).
\end{equation}
If $h(X)$ and $h(Y)$ exist and are finite, there is equality if and only if $X$ and $Y$ are Gaussian with the same (finite) variance. 
\end{theorem}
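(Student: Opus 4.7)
The plan is to combine the generalized de Bruijn identity of Theorem~\ref{ourdebruijn} with the classical Blachman--Stam semigroup argument, using Theorem~\ref{Thtwopossibilities} and Theorem~\ref{entropycontTh} to dispose of the cases where the classical argument breaks. For the inequality, the degenerate cases are handled first: if $h(X)=-\infty$ or $h(Y)=-\infty$, the right-hand side is $-\infty$ and the inequality is trivial; if $h(Y)=+\infty$, the bound $h(U+V)\geq h(V)$ (valid for independent $U,V$ with $h(V)$ well-defined, since $h(U+V)\geq h(U+V\mid U)=h(V)$) gives $h(\sqrt{\lambda}X+\sqrt{1-\lambda}Y)=+\infty$; and if $X\in\mathcal{C}_{\rm BC}$ with $h(Y)$ finite, scaling yields $\sqrt{\lambda}X\in\mathcal{C}_{\rm BC}$, so $h(\sqrt{\lambda}X+\sqrt{1-\lambda}Y)=+\infty$ once more. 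This reduces matters to the case $h(X),h(Y)$ finite and $X,Y\notin\mathcal{C}_{\rm BC}$.

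In this case, let $Z_1,Z_2$ be i.i.d.\ $\mathcal{N}(0,1)$ independent of $(X,Y)$, set $X_t:=X+\sqrt{t}Z_1$, $Y_t:=Y+\sqrt{t}Z_2$, and observe that $\sqrt{\lambda}X_t+\sqrt{1-\lambda}Y_t\stackrel{d}{=}(\sqrt{\lambda}X+\sqrt{1-\lambda}Y)+\sqrt{t}Z$ for a standard Gaussian $Z$ independent of $(X,Y)$. Iterating Theorem~\ref{Thtwopossibilities} with scaling shows $\sqrt{\lambda}X+\sqrt{1-\lambda}Y\notin\mathcal{C}_{\rm BC}$, so Theorem~\ref{ourdebruijn} is applicable to each of $X$, $Y$, and $\sqrt{\lambda}X+\sqrt{1-\lambda}Y$. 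Setting
$$
G(t):=h\bigl(\sqrt{\lambda}X_t+\sqrt{1-\lambda}Y_t\bigr)-\lambda h(X_t)-(1-\lambda)h(Y_t),
$$
one obtains
$$
G'(t)=\tfrac{1}{2}\bigl[I\bigl(\sqrt{\lambda}X_t+\sqrt{1-\lambda}Y_t\bigr)-\lambda I(X_t)-(1-\lambda)I(Y_t)\bigr]\leq 0,\quad t>0,
$$
by the convex form of Stam's Fisher information inequality, so $G$ is non-increasing on $(0,\infty)$, and Theorem~\ref{entropycontTh} extends continuity to $t=0$. Therefore $G(0)\geq 0$ reduces to $\lim_{t\to\infty}G(t)\geq 0$, which in turn follows from the asymptotic $h(U+\sqrt{t}Z)-\tfrac{1}{2}\log(2\pi e t)\to 0$ applied to each of $U=X,Y,\sqrt{\lambda}X+\sqrt{1-\lambda}Y$. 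I expect this last asymptotic, traditionally proved using a second moment assumption, to be the main technical hurdle: the natural route is to rescale to $U/\sqrt{t}+Z$, whose density converges pointwise to $\phi$ and is uniformly bounded by $1/\sqrt{2\pi}$, and then to control $f_t|\log f_t|$ on a large ball by dominated convergence while bounding the tail using only the finiteness of $h(U)$.

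For the equality case, the inequality just established gives $G(t)\geq 0$ for every $t\geq 0$, so together with $G(0)=0$ and the monotonicity of $G$ this forces $G\equiv 0$ on $[0,\infty)$, hence $G'\equiv 0$ on $(0,\infty)$. Tracing the two steps that yield the convex form of Stam's inequality, namely Stam's inequality itself together with the weighted harmonic--arithmetic mean inequality, the pointwise equality forces $X_t$ and $Y_t$ to be Gaussian with $I(X_t)=I(Y_t)$ for every $t>0$. Since the convolution of a density with a Gaussian can be Gaussian only if the original density is Gaussian, $X$ and $Y$ themselves are Gaussian, and equal Fisher informations at any single $t>0$ then yield equality of variances, completing the proof.
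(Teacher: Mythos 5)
Your overall strategy (heat flow plus de Bruijn plus Stam's Fisher information inequality, with equality forcing Gaussianity at every $t$ and Cram\'er finishing the job) is the same as the paper's, and your preliminary case analysis and the equality argument are fine. The substantive divergence, and the genuine gap, is your choice of parametrization. You run the flow as $X_t = X+\sqrt{t}Z$ on $t\in[0,\infty)$, so that the conclusion $G(0)\ge 0$ has to come from the boundary condition at $t=\infty$, namely $h(U+\sqrt{t}Z)-\tfrac12\log(2\pi e t)\to 0$, equivalently $I(U;U+\sqrt{t}Z)\to 0$, for each of $U=X,Y,\sqrt{\lambda}X+\sqrt{1-\lambda}Y$. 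You correctly flag this as the main technical hurdle, but you do not prove it, and it is not a small step. With finite variance it is immediate (sandwich $h(U+\sqrt{t}Z)$ between $\tfrac12\log(2\pi e t)$ and $\tfrac12\log(2\pi e(\operatorname{Var}(U)+t))$), but the whole point of the theorem is to dispense with moment conditions. Your sketch -- pointwise convergence of the density of $U/\sqrt{t}+Z$ to $\phi$, a uniform bound $1/\sqrt{2\pi}$, and dominated convergence on a ball -- only gives the easy Fatou direction $\liminf\ge h(Z)$; the nontrivial direction is the $\limsup$, which amounts to showing no entropy escapes to the tails, and that requires a separate argument tracking how $h(U)<\infty$ controls the tails of $g_t$ uniformly in $t$ (there is no obvious integrable dominating function). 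So as written the proposal is incomplete.

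The paper avoids this entirely by using the compact interpolation $\tilde X^t=\sqrt{t}X+\sqrt{1-t}Z_1$ on $t\in[0,1]$, and defining $\Delta(t)=h(V_t)-\lambda h(\tilde X^t)-(1-\lambda)h(\tilde Y^t)$ with $V_t=\sqrt{\lambda}\tilde X^t+\sqrt{1-\lambda}\tilde Y^t$. Then the boundary at $t=0$ is trivially $\Delta(0)=h(\mathcal N(0,1))-\lambda h(\mathcal N(0,1))-(1-\lambda)h(\mathcal N(0,1))=0$, with no asymptotics needed; the other endpoint $t=1$, which is the one carrying the content, is handled by Theorem~\ref{entropycontTh}, which is exactly what the machinery of Section~\ref{debruijnsec} was built to provide. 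The only extra bookkeeping in the paper's route is the reparametrization $s(t)=(1-t)/t$ needed to feed $\tilde X^t=\sqrt{t}\bigl(X+\sqrt{s(t)}Z_1\bigr)$ into Theorem~\ref{ourdebruijn}, together with the scaling identity $I(aX)=a^{-2}I(X)$, which gives $\Delta'(t)=\tfrac{1}{2t}\bigl[\lambda I(\tilde X^t)+(1-\lambda)I(\tilde Y^t)-I(V_t)\bigr]\ge 0$ exactly as in your computation. I would recommend you switch to this $[0,1]$ parametrization; alternatively, if you want to keep $[0,\infty)$, you need to supply a self-contained proof that $I(U;U+\sqrt{t}Z)\to 0$ as $t\to\infty$ under only $-\infty<h(U)<\infty$ and $U\notin\mathcal C_{\rm BC}$, which is a nontrivial lemma in its own right.
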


To the best of our knowledge, this is the first characterization of the equality case in the EPI under no assumption other than existence of entropies. 
Theorem~\ref{ourEPI} is proved in the Appendix.
For completeness, we present the complete, general version of Stam's proof, 
relying on Theorems \ref{entropycontTh} and \ref{ourdebruijn}. 

As noted above, a version of de Bruijn's identity for the derivative of relative entropy can be established using the Bakry-{\'E}mery theory \cite[Proposition 5.2.2]{bakry:book}. For a diffusion $\{X_t\}$ on $\R$, this is proved for every function $f$ in the associated Dirichlet domain $\mathcal{D}(\mathcal{E})$. The present setting corresponds to the Ornstein-Uhlenbeck process on the real line, with $f = \frac{d\mu}{d\gamma}$, where $\mu$ denotes the law of $X$ and $\gamma$ is a Gaussian measure. 
Our assumptions are strictly weaker: here, the Dirichlet domain reduces to the set of functions having a weak derivative in $L^2(\gamma)$ \cite[p.~126]{bakry:book}, which excludes, for example, functions with jumps. Our result does not require any such regularity assumptions and includes, for example, the case where $X$ has a piecewise constant density. We note also that, since we are not assuming finite variance for $X$, it is not clear how to translate 
the derivative of relative entropy 
in \cite[Proposition 5.2.2]{bakry:book} 
to the derivative of the entropy itself.

Finally, we state two simple consequences of Theorem \ref{ourdebruijn}. Sometimes, in applications arising in the study of generative models in machine learning, it is of interest to differentiate the entropy of a diffusion with discrete initial distribution. Corollary \ref{discretede} says that de Bruijn's identity is valid for any discrete $X$ with finite discrete entropy $H(X)$. Recall that the discrete (Shannon) entropy of a discrete random variable $X$ with probability mass function $p$ on a finite or countably infinite set $A$ is
\begin{equation}
H(X)=H(p)=-\sum_{x\in A}p(a)\log p(a).
\label{eq:H}
\end{equation}
\begin{corollary} \label{discretede}
Let $X$ be a discrete real-valued random variable with finite entropy $H(X)$. Then, if $Z \sim \mathcal{N}(0,1)$ is independent of $X$,
$$\frac{d}{dt}h(X+\sqrt{t}Z) = \frac{1}{2}I(X+\sqrt{t}Z),\quad t>0.$$
\end{corollary}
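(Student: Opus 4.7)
The plan is straightforward: verify the hypotheses of Theorem \ref{ourdebruijn} and invoke it directly, with no new ideas required.

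First, the hypothesis $h(X) < \infty$ is automatic: a discrete $X$ has no density with respect to Lebesgue measure, so by the paper's convention $h(X) = -\infty$.

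For the second hypothesis, I would take $Y \sim \mathcal{N}(0,1)$ independent of $X$, so that $h(Y)$ is finite, and then establish that $h(X+Y)$ is finite as well. The density of $X+Y$ is the Gaussian mixture $f(y) = \sum_a p_X(a)\,\phi(y-a)$, which is bounded above by $\phi(0) = 1/\sqrt{2\pi} < 1$. Hence the integrand $-f\log f$ is non-negative and $h(X+Y) = -\int f\log f$ is well-defined in $[0,+\infty]$. To show it is finite, I would use the identity
$$h(X+Y) = h(Y) + I(X; X+Y),$$
which follows by writing $I(X;X+Y) = \sum_a p_X(a)\, D\bigl(\phi(\cdot - a)\,\|\, f\bigr)$, splitting $\log(\phi/f) = \log\phi - \log f$, and applying Tonelli to the non-negative contribution $\sum_a p_X(a) \int \phi(y-a)(-\log f(y))\,dy = \int f(y)(-\log f(y))\,dy$. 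Combined with the standard bound $I(X;X+Y) \leq H(X) < \infty$, this gives $h(X+Y) \leq h(Y) + H(X) < \infty$.

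With both hypotheses of Theorem \ref{ourdebruijn} verified, the conclusion follows immediately at every $t > 0$. The only mildly delicate point is justifying the mutual-information identity for a discrete $X$ without any moment assumption; the boundedness of $f$ ensures that $-\log f$ is non-negative throughout, which lets Tonelli take the place of the dominated-convergence argument that would otherwise require finite variance.
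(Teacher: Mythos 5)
There is a genuine gap in the step ``with both hypotheses of Theorem \ref{ourdebruijn} verified, the conclusion follows immediately.'' You cannot apply Theorem \ref{ourdebruijn} directly to a discrete $X$. Although the stated hypothesis ``$h(X)<\infty$'' is, under the paper's conventions, literally satisfied when $h(X)=-\infty$, the proof of that theorem runs through Lemma \ref{gaussfinite} and Lemma \ref{lemmaexchange}, each of which requires $X$ to have a density $f$ with $-\infty<h(X)<\infty$. A discrete $X$ has no density, so those lemmas, and therefore the theorem as proved, do not cover it. (If the theorem did apply to discrete $X$ as stated, Corollary \ref{discretede} would need no proof at all.)

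The missing idea is a smoothing step. Fix $t>0$ and $\varepsilon\in(0,t)$, and set $X':=X+\sqrt{\varepsilon}\,Z'$ with $Z'\sim\mathcal{N}(0,1)$ independent of $X$ and $Z$. Then $X'$ has a smooth density with finite differential entropy, and your bound $h(X+Y)\le h(Y)+H(X)$ (essentially \cite[Lemma 5.1]{bobkov:20b}) shows $h(X'+Y)<\infty$ for $Y\sim\mathcal{N}(0,1)$. Theorem \ref{ourdebruijn} now legitimately applies to $X'$, and since $\sqrt{\varepsilon}\,Z'+\sqrt{t-\varepsilon}\,Z\stackrel{d}{=}\sqrt{t}\,Z$, one has $X'+\sqrt{t-\varepsilon}\,Z\stackrel{d}{=}X+\sqrt{t}\,Z$, so $\frac{d}{dt}h(X+\sqrt{t}Z)=\frac12 I(X+\sqrt{t}Z)$ for all $t>\varepsilon$; letting $\varepsilon\downarrow0$ gives the claim for all $t>0$. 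Your finiteness computation for $h(X+Y)$ is correct and is the same ingredient the paper uses, but without the smoothing step the invocation of the theorem is unjustified.
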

\begin{proof}
By \cite[Lemma 5.1]{bobkov:20b} we have $h(X+Z) \leq H(X) + h(Z) < \infty$.
By applying Theorem \ref{ourdebruijn} to $X+\sqrt{\varepsilon}Z^{\prime}$ for some fixed $0<\varepsilon < t$, where $Z^{\prime}$ is a standard Gaussian independent of $X$ and $Z$, we obtain 

$$
\frac{d}{dt}h(X+\sqrt{t}Z) = \frac{d}{dt}h(X +\sqrt{\varepsilon}Z^{\prime} +\sqrt{t-\varepsilon}Z)  = \frac{1}{2}I(X +\sqrt{\varepsilon}Z^{\prime} +\sqrt{t-\varepsilon}Z) = \frac{1}{2}I(X+\sqrt{t}Z).
$$
\end{proof}

A combination of Theorems \ref{entropycontTh} and \ref{ourdebruijn} with the mean value theorem gives de Bruijn's identity at $t=0$. Recall that, by convention, we set the Fisher information $I(X)=\infty$ for any random variable that does not have an absolutely continuous density.

\begin{corollary} \label{debruijn0}
Let $X$ be a random variable in $\R$ with $h(X) < \infty$, for which there exists an independent random variable $Y$ with finite differential entropy such that $h(X+Y)$ exists and is finite. Then, if $Z \sim \mathcal{N}(0,1)$ is independent of $X,$
\be \label{debruijn0eq}
\frac{d}{dt}h(X+\sqrt{t}Z)\Big|_{t=0^+} = \frac{1}{2}I(X).
\ee
\end{corollary}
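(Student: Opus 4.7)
The strategy is to deduce the corollary from the mean value theorem applied to the function $\phi(t):=h(X+\sqrt{t}Z)$, combined with the continuity of Fisher information under small Gaussian perturbations. Under the stated hypothesis, Theorem \ref{entropycontTh} gives $\lim_{t\downarrow 0}\phi(t)=h(X)=\phi(0)$, so $\phi$ is continuous at $0$, and Theorem \ref{ourdebruijn} gives that $\phi$ is differentiable on $(0,\infty)$ with $\phi'(t)=\tfrac12 I(X+\sqrt{t}Z)$. Moreover, since $X\notin\mathcal{C}_{\rm BC}$ by hypothesis and a Gaussian lies outside $\mathcal{C}_{\rm BC}$, Theorem \ref{Thtwopossibilities}, together with the trivial lower bound $h(X+\sqrt{t}Z)\geq \tfrac12\log(2\pi t)$ coming from the uniform bound on the density of $X+\sqrt{t}Z$, ensures that $\phi$ is finite on $[0,\infty)$. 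Stam's inequality also yields $I(X+\sqrt{t}Z)\leq 1/t<\infty$ for $t>0$, so $\phi'$ is finite there.

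For each $t>0$, the mean value theorem applied to $\phi$ on $[0,t]$ furnishes some $s_t\in(0,t)$ such that
\[
\frac{\phi(t)-\phi(0)}{t} \;=\; \phi'(s_t) \;=\; \tfrac12\,I\bigl(X+\sqrt{s_t}\,Z\bigr).
\]
Since $s_t\downarrow 0$ as $t\downarrow 0$, the identity in the statement reduces to
\[
\lim_{s\downarrow 0}\, I\bigl(X+\sqrt{s}\,Z\bigr) \;=\; I(X),
\]
read in $[0,\infty]$.

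This continuity of Fisher information under vanishing Gaussian smoothing is, I expect, the only non-routine step; everything else is direct. I would establish it by combining two classical properties of $I$. Stam's convolution inequality for Fisher information yields $I(X+\sqrt{s}Z)\leq I(X)$ for all $s>0$ (trivially when $I(X)=\infty$), and hence $\limsup_{s\downarrow 0} I(X+\sqrt{s}Z)\leq I(X)$. Conversely, $X+\sqrt{s}Z\to X$ weakly as $s\downarrow 0$, and $I$ is lower semicontinuous with respect to weak convergence, so $\liminf_{s\downarrow 0} I(X+\sqrt{s}Z)\geq I(X)$. The two bounds force the limit to equal $I(X)$, and the case $I(X)=\infty$ (for instance, when $X$ is discrete as in Corollary \ref{discretede}) is handled identically, yielding a right derivative of $+\infty$ consistent with the convention in the statement.
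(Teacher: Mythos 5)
Your argument is correct and follows the same route as the paper: apply the mean value theorem on $[0,t]$, using Theorem \ref{entropycontTh} for continuity at $0$ and Theorem \ref{ourdebruijn} for $\phi'(t)=\tfrac12 I(X+\sqrt{t}Z)$ on $(0,\infty)$, and then pass to the limit via continuity of the Fisher information under vanishing Gaussian smoothing. The only divergence is in how that last continuity is established: the paper simply cites \cite[Corollary 15.3]{bobkov:22} for $I(X+\sqrt{t}Z)\to I(X)$ as $t\downarrow 0$, whereas you re-derive it by pairing Stam's inequality (giving $\limsup_{t\downarrow 0} I(X+\sqrt{t}Z)\leq I(X)$) with lower semicontinuity of $I$ under weak convergence (giving the matching $\liminf$); both routes are valid, your version is self-contained and also handles the degenerate case $I(X)=\infty$ transparently.
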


\begin{proof}
Corollary 15.3 of \cite{bobkov:22} states that for any random variable $X$,  
\be \label{bobkovsFIconv}
I(X+\sqrt{t}Z) \to I(X) \quad \text{as } t\to 0.
\ee
Let $r(t) := h(X+\sqrt{t}Z), t\geq 0.$ By Theorem \ref{entropycontTh}, $r(t)$ is continuous (from the right) at $t=0$ and by Theorem \ref{ourdebruijn} it is differentiable for any $t>0$ with 
$$
\frac{d}{dt}r(t) = \frac{1}{2}I(X+\sqrt{t}Z).
$$
Therefore, noting that \eqref{bobkovsFIconv} means $r^{\prime}(t) \to \frac{1}{2}I(X)$ as $t \downarrow 0^{+}$, the mean value theorem gives \eqref{debruijn0eq}.
\end{proof}

\subsection{(In)stability of Shannon's EPI} \label{introstabilitysec}
The next question we address is that of stability in Shannon's EPI: if the deficit in the EPI \eqref{EPI} is small, are the random variables $X,Y$ close (in some appropriate sense) to the extremizers, i.e., to Gaussians? 
To make this precise, for $\lambda\in(0,1)$ let 
$$
\depi{\lambda}(X,Y) := h(\sqrt{\lambda}X + \sqrt{1-\lambda}Y) - \lambda h(X) + (1-\lambda)h(Y).
$$
Does
$$
\depi{\lambda}(X,Y) \leq \delta,
$$
imply that
$$
d(X, {G}_X), d(Y, \text{G}_Y) \leq \epsilon(\delta), 
$$
for some Gaussians $G_X,G_Y$, some ``distance'' measure $d$, and with $\epsilon(\delta) \to 0$ as $\delta \to 0$?
Is it possible to obtain quantitative estimates for $\epsilon(\delta)$?

The first (positive) stability result was obtained by Carlen and Soffer in \cite{carlen:91}, where it was shown that, among isotropic random vectors that satisfy a uniform bound on the Fisher information and a uniform bound on the decay of the second moment tails, $\delta_{\mathrm{EPI},\lambda} \to 0$ implies convergence to Gaussianity in relative entropy.
 In the reverse direction,
 Courtade, Fathi and Pananjady \cite{courtade:18} constructed a sequence of random variables that satisfy $\depi{\lambda} \to 0,$ while their relative entropy from any Gaussian is bounded away from zero. These random variables do not satisfy Carlen and Soffer's uniform bound on the second moment tails, but they have unit variance and bounded Fisher information. 
A quantitative stability result for uniformly log-concave random variables
is also established in \cite{courtade:18}.

 Ball, Barthe and Naor \cite{ball:03} proved that, if $X$ has finite Poincar{\'e} constant $C_P(X)$ (see Section \ref{taosection}), then
\begin{equation} \label{eq:ball}
D(X) \leq \frac{2C_P(X) + 2\sigma^2}{\sigma^2}\depi{\frac{1}{2}}(X,X),
\end{equation}
where 
$\depi{\frac{1}{2}}(X,X)$ refers to the case when $X$ and $Y$
have the same distribution,
$\sigma^2$ is the variance of $X$, and $D(X)$ denotes the relative entropy between $X$
and a Gaussian with the same mean and variance as $X$.
This was generalized to higher dimensions under the extra 
assumption that $X$ is log-concave by Ball and Nguyen \cite{ball:12}.
Related bounds were also established in \cite{KM:14,KM:16};
in particular,
using the results of Johnson and Barron \cite{johnson-barron:04},
it was shown in \cite{KM:14} that, if $X$ has finite Poincar{\'e} constant $C_P(X)$, then 
\begin{equation} 
\label{kontoyianniseq}
D(X) \leq \frac{2C_P(X) + \sigma^2}{\sigma^2}\depi{\frac{1}{2}}(X,X). 
\end{equation}
Further discussion of the relation between
$\depi{\frac{1}{2}}(X,X)$ and the closely related ``entropic
doubling constant'' is given 
in \cite{gavalakis-doubling:24,gavalakis-rupert:arxiv}.

Eldan and Mikulincer \cite{eldan:20} used an adaptation of Lehec's idea \cite{lehec:13} to generalize \eqref{eq:ball} for any $\lambda$ (instead of only considering $\lambda = \frac{1}{2}$) and to obtain improved constants in the case where the random variables are already close to Gaussian. 
This result played a key role in the recent breakthrough resolution of Bourgain's slicing problem \cite{klartag:24, guan:24}.

Recently, a weak qualitative stability result was established 
in \cite{gavalakis-doubling:24}, in a similar spirit as that 
in \cite{carlen:91}, 
but without the Fisher information assumption, and in terms of 
weak convergence rather than convergence in relative entropy. 

Although the counterexample of \cite{courtade:18} says that stability may fail for strong distances such as relative entropy, it is natural to ask whether we may still have stability in the  sense of weak convergence. The result 
of \cite{gavalakis-doubling:24} does not completely answer this, as the uniform integrability assumption in that result, which is necessary in the proof due to the use of the result of \cite{carlen:91}, is not satisfied by the counterexample of \cite{courtade:18}.

Our main stability result is only qualitative and in the weakest possible distance, the L{\'e}vy metric $d_L$ which metrizes weak convergence (see Section \ref{stabilitysec} for definitions), but it only assumes finite $k$th moments for some $k>1$:
\begin{theorem} \label{qualitativestabilityTh}
Suppose $X,Y$ are random variables with finite 
differential entropies, $-\infty <h(X),h(Y) < \infty$, 
and
$$
\mathbb{E}|X|^k,\mathbb{E}|Y|^{k} <\infty \text{ for some }k >1.
$$
Fix $\lambda \in (0,1)$. Then 
\be \label{epsdeltaeq}
\mbox{for each $\epsilon >0$ there is a $\delta > 0$ s.t. }\;
\depi{\lambda}(X,Y) < \delta \; \text{ implies } \; d_{\mathrm{L}}(X,G_1),  d_{\mathrm{L}}(Y,G_2)  < \epsilon,
\ee
for some Gaussian random variables $G_1,G_2$ with the same variance.

Equivalently, suppose $\lambda\in(0,1)$ and
$\{(X_n,Y_n)\}$ is a sequence of pairs of
random variables such 
that, for each $n$, $X_n,Y_n$ are independent with have finite differential 
entropies. If $\{X_n\}$ and $\{Y_n\}$ have uniformly bounded 
$k$th absolute moments for some $k>1$ and $\depi{\lambda}(X_n,Y_n) \to 0$ 
as $n\to \infty$, then the limits of any subsequence along which 
$\{X_n\}$ and $\{Y_n\}$ both converge weakly, are Gaussian with the 
same variance. 
\end{theorem}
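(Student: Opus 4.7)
The plan is to prove the sequential formulation (the $\epsilon$-$\delta$ statement then follows by a contrapositive argument using tightness). Assume $(X_n), (Y_n)$ are independent sequences with $h(X_n), h(Y_n)$ finite, $\sup_n \E{|X_n|^k}, \sup_n \E{|Y_n|^k} < \infty$ for some $k > 1$, and $\depi{\lambda}(X_n, Y_n) \to 0$. By Prokhorov's theorem, after passing to a subsequence, $X_n \to X_*$ and $Y_n \to Y_*$ weakly, with $\E{|X_*|^k}, \E{|Y_*|^k} < \infty$ by Fatou. The aim is to show $X_*$ and $Y_*$ are Gaussian with the same variance. The strategy is to regularise by convolution with $\sqrt{t}Z$ for $Z \sim \mathcal{N}(0,1)$, transfer the vanishing deficit to the weak limit for every fixed $t > 0$, apply the equality case of our EPI (Theorem~\ref{ourEPI}) to the smoothed limits, and then deconvolve using characteristic functions.

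First, I would show that Gaussian smoothing does not increase the EPI deficit. Let $Z_1, Z_2$ be independent standard normals, independent of everything, and set $Z := \sqrt{\lambda}Z_1 + \sqrt{1-\lambda}Z_2 \sim \mathcal{N}(0,1)$, so that
$$\sqrt{\lambda}(X_n + \sqrt{t}Z_1) + \sqrt{1-\lambda}(Y_n + \sqrt{t}Z_2) = \sqrt{\lambda}X_n + \sqrt{1-\lambda}Y_n + \sqrt{t}Z.$$
The $k$th-moment bound with $k > 1$ makes each $h(\,\cdot\, + \sqrt{s}Z)$ finite for every $s>0$, so Theorem~\ref{ourdebruijn} applies and each entropy increment $h(\,\cdot\, + \sqrt{t}Z) - h(\,\cdot\,)$ equals $\tfrac{1}{2}\int_0^t I(\,\cdot\, + \sqrt{s}Z)\,ds$. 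The smoothed-minus-original deficit is therefore
$$\tfrac{1}{2}\int_0^t \bigl[ I(\sqrt{\lambda}(X_n + \sqrt{s}Z_1) + \sqrt{1-\lambda}(Y_n + \sqrt{s}Z_2)) - \lambda I(X_n + \sqrt{s}Z_1) - (1-\lambda) I(Y_n + \sqrt{s}Z_2) \bigr]\,ds,$$
which is nonpositive by the convex form of Stam's Fisher-information convolution inequality applied to the independent pair $X_n + \sqrt{s}Z_1$, $Y_n + \sqrt{s}Z_2$. Hence $\depi{\lambda}(X_n + \sqrt{t}Z_1, Y_n + \sqrt{t}Z_2) \to 0$ for every fixed $t > 0$.

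Second, I would prove entropy continuity after Gaussian smoothing: for each fixed $t > 0$, $h(X_n + \sqrt{t}Z) \to h(X_* + \sqrt{t}Z)$, and similarly for $Y_n$ and for the convex combination. The densities $p_n$ of $X_n + \sqrt{t}Z$ are uniformly bounded by $(2\pi t)^{-1/2}$ and converge pointwise to the density $p_*$ of $X_* + \sqrt{t}Z$ (by weak convergence and continuity of the Gaussian kernel), so Scheff\'e's lemma gives $L^1$ convergence. The uniform $k$th-moment transfers to $X_n + \sqrt{t}Z$ and yields uniform tail control; combined with the density bound, this lets one split $\int p_n|\log p_n|$ into the parts $\{p_n \ge \epsilon\}$ and $\{p_n < \epsilon\}$ and apply dominated convergence on each, giving the required entropy continuity. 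Together with the monotonicity from the previous step, this forces $\depi{\lambda}(X_* + \sqrt{t}Z_1, Y_* + \sqrt{t}Z_2) = 0$ for every $t > 0$.

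Finally, since the smoothed limits have finite entropy (upper-bounded via the maximum-entropy principle using the $k$th-moment bound on $X_* + \sqrt{t}Z$, and lower-bounded by $\tfrac{1}{2}\log(2\pi e t)$ since $h(X_* + \sqrt{t}Z) \ge h(\sqrt{t}Z)$), Theorem~\ref{ourEPI} forces $X_* + \sqrt{t}Z_1$ and $Y_* + \sqrt{t}Z_2$ to be Gaussian with a common variance $\sigma_t^2 \ge t$. Dividing characteristic functions, $\phi_{X_*}(\xi) = \exp(i\mu_t\xi - (\sigma_t^2 - t)\xi^2/2)$, so $X_* \sim \mathcal{N}(\mu_t, \sigma_t^2 - t)$; in particular $\sigma_t^2 - t$ is independent of $t$, and the same for $Y_*$, with matching variance. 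The hard part will be the second step: weak convergence alone does not imply entropy convergence, and the argument hinges on using the Gaussian smoothing (which supplies the uniform density bound) together with the $k$th-moment assumption to establish uniform integrability of $|\log p_n|$ against $p_n$, so that the limit can be passed through the entropy integral.
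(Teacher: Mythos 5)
Your proof follows the same overall architecture as the paper's: extract a weakly convergent subsequence by tightness, smooth with a Gaussian, show the EPI deficit does not increase under smoothing, pass the vanishing deficit to the smoothed weak limit via entropy convergence, invoke the equality case of Theorem~\ref{ourEPI}, and then undo the smoothing via Cram\'er's theorem (your ``divide the characteristic functions'' is just Cram\'er in disguise). The interesting divergence is in the middle two steps. For the monotonicity of the deficit under Gaussian smoothing, you write the smoothed-minus-original deficit as $\frac{1}{2}\int_0^t[I(V_n+\sqrt{s}Z)-\lambda I(X_n+\sqrt{s}Z_1)-(1-\lambda)I(Y_n+\sqrt{s}Z_2)]\,ds\leq 0$ using Theorems~\ref{ourdebruijn}, \ref{entropycontTh} and Stam's Fisher-information inequality; this works here because the $k$th-moment bound guarantees all the entropies are finite so the de~Bruijn hypotheses hold. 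The paper's Lemma~\ref{deficitdecrlemma} achieves the same conclusion more economically, via the chain rule $h(\sqrt{\lambda}X_1+\sqrt{1-\lambda}X_2\,|\,\Yl)=h(\sqrt{\lambda}X_1+\sqrt{1-\lambda}X_2)+h(\sqrt{1-t}Z)-h(\Yl)$, conditioning-reduces-entropy, and the EPI itself --- no moment assumptions, no integral representation, no Fisher information. That version is worth having for its own sake (it parallels Carlen--Soffer under weaker hypotheses). For entropy convergence of the smoothed variables, the paper cites Godavarti--Hero \cite{hero:04}, which is exactly the statement you are reconstructing (pointwise density convergence plus a uniform sup-norm bound plus a uniform $k$th-moment bound implies convergence of differential entropies). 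Your split ``$\{p_n\ge\epsilon\}$ versus $\{p_n<\epsilon\}$'' as written is not enough to run dominated convergence: the set $\{p_n<\epsilon\}$ has infinite Lebesgue measure and $p_n|\log p_n|$ has no pointwise dominating integrable envelope there without a further truncation in $x$. You need the $k$th moments to cut off the tail $\{|x|>R\}$ uniformly in $n$, handle $\{|x|\le R,\ p_n<\epsilon\}$ by the monotonicity of $y\mapsto y\log(1/y)$ near $0$, and handle $\{p_n\ge\epsilon\}$ by the bounded density and bounded Lebesgue measure of that set intersected with $\{|x|\le R\}$. That is precisely the content of the cited theorem; spelling it out is fine, but the displayed split alone is a gap.

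One small point on the final deconvolution: dividing characteristic functions gives $X_*\sim\mathcal{N}(\mu_t,\sigma_t^2-t)$, but you should note that $\sigma_t^2-t$ could a priori be zero, i.e.\ $X_*$ a point mass --- Cram\'er's theorem gives exactly the same conclusion (a degenerate Gaussian), and the paper has the same feature, so this is consistent rather than an error, but worth being explicit.
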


Note that the conclusion of Theorem \ref{qualitativestabilityTh} does not imply that the random variables have a weak limit if the deficit in the EPI vanishes;
in fact this may well not hold.
It is also straightforward to check that the counterexample of \cite{courtade:18} satisfies the assumption of our Theorem \ref{qualitativestabilityTh}, since the densities in that example all have unit variance. 

The proof, given in Section \ref{stabilitysec}, is based on a compactness argument. After smoothing the random variables appropriately, the moment condition ensures convergence of differential entropies and we obtain the result by the equality case in the EPI via Theorem~\ref{ourEPI}.

\subsection{Stability of Tao's discrete EPI}

Obtaining discrete analogues of the EPI is a subtle task and it is not 
ever clear what is the most natural discrete version of the continuous EPI. 
Tao \cite{tao:10} proved the following discrete analogue for independent 
and identically distributed (i.i.d.) discrete random variables taking 
values in any torsion-free group: 
\begin{theorem}[\cite{tao:10}]
Let $X_1,X_2$ be i.i.d.\ random variables taking values in a discrete subset of a torsion-free group $G$. Then 
$$H(X_1+X_2) \geq H(X_1) + \frac{1}{2}\log{2} - o(1),$$
as $H(X_1) \to \infty$.
\end{theorem}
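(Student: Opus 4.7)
The plan is to derive Tao's discrete EPI from Shannon's EPI (Theorem~\ref{ourEPI}) by a smoothing reduction. Since the ambient group is torsion-free, a rescaling argument lets us assume $X_1, X_2$ take values in $\Z$ and their common support generates $\Z$ as a group. Let $U_1, U_2$ be i.i.d.\ uniform on $[0,1]$, independent of $X_1, X_2$, and set $\tilde X_i := X_i + U_i$. Then $\tilde X_i$ is absolutely continuous with density $f_{\tilde X_i}(x) = p_{X_i}(\lfloor x\rfloor)$, and a direct calculation yields $h(\tilde X_i) = H(X_i)$. Since $\tilde X_1, \tilde X_2$ are i.i.d.\ continuous random variables with finite differential entropy, Shannon's EPI with $\lambda = 1/2$, combined with the scaling identity $h(aY) = h(Y) + \log|a|$, gives
\begin{equation} \label{eq:taoEPIstep}
h(\tilde X_1 + \tilde X_2) \;\geq\; h(\tilde X_1) + \tfrac{1}{2}\log 2 \;=\; H(X_1) + \tfrac{1}{2}\log 2.
\end{equation}

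Writing $S := X_1 + X_2$ and $V := U_1 + U_2$, we have $\tilde X_1 + \tilde X_2 = S + V$, where $V$ has the triangular density on $[0,2]$ with $h(V) = 1/2$. The chain rule $h(S+V) = h(S+V \mid S) + I(S\,;\,S+V)$ together with $h(S+V\mid S) = h(V)$ and $I(S\,;\,S+V) = H(S) - H(S\mid S+V)$ gives the mixed identity
\begin{equation} \label{eq:taoChain}
h(S+V) \;=\; H(S) + h(V) - H(S \mid S+V),
\end{equation}
so, combining \ref{eq:taoEPIstep} and \ref{eq:taoChain}, it suffices to prove $H(S \mid S+V) \to h(V) = 1/2$ as $H(X_1) \to \infty$. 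For each realization $S+V = y = k + r$ with $k \in \Z$ and $r \in [0,1)$, the only values of $S$ compatible with $y$ are $k-1$ and $k$, with posterior probabilities proportional to $p_S(k-1)(1-r)$ and $p_S(k)\,r$ respectively. Hence, if $p_S(k)/p_S(k-1)\to 1$ for a typical $k$ (in $f_{S+V}$-measure), the posterior becomes $\mathrm{Bernoulli}(r)$ and the fractional part $r = \{S+V\}$ becomes asymptotically uniform on $[0,1)$, so that
\[
H(S \mid S+V) \;\longrightarrow\; \int_0^1 \bigl(-r\log r - (1-r)\log(1-r)\bigr)\, dr \;=\; \tfrac{1}{2},
\]
as desired.

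The principal obstacle is thus establishing this local smoothness of $p_S = p_{X_1+X_2}$: an entropic local-limit-theorem-type statement for the sum of two i.i.d.\ discrete random variables with large Shannon entropy. Both hypotheses are used essentially in this step: torsion-freeness justifies the reduction to $\Z$ and, after it, guarantees that the characteristic function $|\hat p_{X_1}(\xi)|$ is strictly less than $1$ on $(-\pi, \pi]\setminus\{0\}$, while a large value of $H(X_1)$ should rule out approximate concentration on arithmetic progressions and thereby force $|\hat p_{X_1}(\xi)|$ to be \emph{uniformly} small away from $\xi = 0$. A Fourier-analytic argument in the spirit of the classical LCLT is then expected to convert this decay of the characteristic function into the $o(1)$ smoothness bound, uniformly in $X_1$. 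Once this is in hand, \ref{eq:taoEPIstep} and \ref{eq:taoChain} combine to yield $H(X_1 + X_2) \geq H(X_1) + \tfrac{1}{2}\log 2 - o(1)$ as $H(X_1) \to \infty$, which is the desired inequality.
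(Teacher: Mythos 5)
This theorem is cited from \cite{tao:10}; the paper states it without proof, so I am assessing your argument on its own merits. The reduction to $\Z$, the identity $h(X_i + U_i) = H(X_i)$, the EPI step \ref{eq:taoEPIstep}, and the mutual-information decomposition \ref{eq:taoChain} (with $h(V) = \tfrac12$ for the triangular density) are all correct, and they correctly localize the whole problem to the single claim $H(S\mid S+V) \geq \tfrac12 - o(1)$ as $H(X_1)\to\infty$.

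That claim is false as stated, and the heuristic you offer for it --- that large $H(X_1)$ rules out approximate concentration on arithmetic progressions and forces \emph{uniform} decay of $|\hat p_{X_1}|$ away from $0$ --- does not hold. Take $X_1 = 4Y + B$ with $Y$ uniform on $\{0,\dots,M-1\}$ and $B\sim\mathrm{Bernoulli}(1/M)$ independent, and let $M\to\infty$. The support of $X_1$ generates $\Z$, $H(X_1) = \log M + H(1/M) \to\infty$, yet $|\hat p_{X_1}(\pi/2)| = |1 - \tfrac1M + \tfrac{i}{M}| \to 1$, and the neighboring-mass ratios $p_S(k)/p_S(k-1)$ are never close to $1$. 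A direct computation gives $H(S\mid S+V) = O(M^{-1}\log M) \to 0$, so \ref{eq:taoEPIstep} and \ref{eq:taoChain} combine to give only $H(S) \geq H(X_1) + \tfrac12\log 2 - \tfrac12 + o(1)$, which is strictly weaker than the trivial bound $H(S)\geq H(X_1)$ because $\tfrac12\log 2 < \tfrac12$; Tao's inequality remains true for this family, but your argument fails to see it. The missing ingredient is an adaptive choice of smoothing scale: one cannot always convolve with $U[0,1]$, but must first locate a scale $m=m(X_1)$ at which the distribution is genuinely balanced (roughly, $X_1\bmod m$ nearly uniform and nearly independent of $\lfloor X_1/m\rfloor$), and it is the existence of such a scale --- obtained in \cite{tao:10} by a pigeonhole over scales --- that actually uses $H(X_1)\to\infty$. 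Your proposal replaces this lemma with a heuristic that is not correct, so there is a genuine gap.
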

Here $H$ denotes the discrete (Shannon) entropy, recall~\eqref{eq:H}. 
This can be seen as a discrete analogue of \eqref{EPI} for $\lambda=\frac{1}{2}$, which by scaling, is the same as 
$$
h(X_1+X_2) \geq h(X_1) +\frac{1}{2}\log{2}. 
$$
It is easy to see that without the $o(1)$ term, the inequality 
$$
H(X_1+X_2) \geq H(X_1) + \frac{1}{2}\log{2},
$$
can fail if $H(X_1)$ is small enough. 

In the case of integer-valued random variables,
finite bounds for the $o(1)$ term were established 
in \cite{gavalakis:24, fradelizi:arxiv24}, along with 
generalizations to sums of more than two random variables,
under the discrete log-concavity assumption: 
\begin{theorem}[{\cite[Theorem 2, case $n=1$]{gavalakis:24}}]
Let $X_1, X_2$ be i.i.d.\ log-concave random variables in $\mathbb{Z}$. 
There is an absolute constant $C$ such that,
\begin{equation} \label{discreteEPI}
H(X_1+X_2) \geq H(X_1) + \frac{1}{2}\log{2} - C\frac{\log{\sigma}}{\sigma},
\end{equation}
provided that $\sigma^2:= \Var{(X_1)}$ is large enough.
\end{theorem}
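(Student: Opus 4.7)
The plan is to prove the discrete EPI stability~\eqref{discreteEPI} by showing that both $H(X_1)$ and $H(X_1+X_2)$ are within $O(\log\sigma/\sigma)$ of their Gaussian counterparts $\frac{1}{2}\log(2\pi e\sigma^2)$ and $\frac{1}{2}\log(4\pi e\sigma^2)$, respectively. Once these one-sided estimates are in place, the result follows by a simple subtraction, since $H(X_1+X_2) - H(X_1) - \frac{1}{2}\log 2$ is then forced into an interval of length $O(\log\sigma/\sigma)$ around $0$.

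The upper bound $H(X_1) \leq \frac{1}{2}\log(2\pi e\sigma^2) + O(1/\sigma^2)$ is the easy direction: embed $X_1$ in the continuum via $\tilde{X}_1 = X_1 + U$ with $U$ uniform on $[0,1)$ and independent of $X_1$, so that $H(X_1) = h(\tilde{X}_1)$, and apply the Gaussian maximum-entropy inequality using $\mathrm{Var}(\tilde{X}_1) = \sigma^2 + 1/12$. The lower bound $H(X_1) \geq \frac{1}{2}\log(2\pi e\sigma^2) - O(\log\sigma/\sigma)$ is the main technical step. Here I would use the two ingredients highlighted in the introduction: the Cheeger/isoperimetric inequality for log-concave pmfs on $\mathbb{Z}$, which yields a Poincaré-type inequality with constant of order $\sigma^2$, together with the (sub-)exponential concentration of log-concave distributions beyond scale $O(\sigma)$ from the mode. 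Combined with the standard pointwise bound $\|p\|_\infty = O(1/\sigma)$ for log-concave $p$ with variance $\sigma^2$, these allow one to compare $X_1$ to a discrete Gaussian with the same mean and variance; a Talagrand/HWI-type transport-entropy argument driven by the Poincaré constant gives a bound on the relative entropy of order $\log\sigma/\sigma$, which in turn converts to the entropy gap.

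To conclude, I would apply the same key lemma to the random variable $X_1+X_2$, using the well-known fact that the convolution of two log-concave pmfs on $\mathbb{Z}$ is again log-concave, with variance $2\sigma^2$. The lemma then yields $H(X_1+X_2) \geq \frac{1}{2}\log(4\pi e\sigma^2) - O(\log\sigma/\sigma)$, and subtracting the upper bound on $H(X_1)$ produces exactly~\eqref{discreteEPI}. The role of the i.i.d.\ assumption is only through this stability of log-concavity under convolution and the matching of variances; the argument is naturally symmetric in $X_1,X_2$.

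The main obstacle is obtaining the sharp rate $\log\sigma/\sigma$ rather than a weaker polynomial decay. The logarithmic factor typically emerges from balancing the bulk contribution (controlled by Cheeger, which gives precise behaviour on scale $\sigma$) against the tail contribution (controlled by concentration, which incurs a logarithmic loss once one truncates at a scale $\sigma\sqrt{\log\sigma}$ to make the tail total-variation negligible). A secondary technical difficulty is the discrete-to-continuous passage: care is required because while $\tilde{X}_1 = X_1+U$ is not itself log-concave, it does inherit enough regularity from $X_1$ (bounded density, matching moments) to run the Gaussian comparison; avoiding circularity when invoking Poincaré-type inequalities for the smoothed law is the most delicate point of the argument.
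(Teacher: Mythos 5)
This statement is quoted in the paper as a known result (Theorem 2, case $n=1$, of \cite{gavalakis:24}), and the paper does not give a proof of it, so there is no internal proof to compare against; I am therefore assessing your argument on its own.

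There is a genuine gap: the ``main technical step'' you propose, namely the lower bound $H(X_1)\ge \tfrac12\log(2\pi e\sigma^2)-O(\log\sigma/\sigma)$ for log-concave $X_1$ (and the analogous bound for $X_1+X_2$), is false. Log-concave distributions are in general \emph{not} within $o(1)$ in entropy of the Gaussian with matching variance. The two-sided exponential is the standard counterexample: the continuous Laplace density with variance $\sigma^2$ has differential entropy $1+\log(\sqrt{2}\,\sigma)$, while the Gaussian of the same variance has $\tfrac12\log(2\pi e\sigma^2)$; the deficit is $\tfrac12(\log\pi-1)\approx 0.072$, a fixed positive constant independent of $\sigma$. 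The same is true for the discrete geometric/two-sided geometric distribution on $\mathbb{Z}$, which is log-concave with variance $\sigma^2$ as large as you like, yet its deficit from $\tfrac12\log(2\pi e\sigma^2)$ stays bounded away from zero. Consequently, a proof strategy that compares $H(X_1)$ and $H(X_1+X_2)$ separately to an external Gaussian reference cannot yield a $\log\sigma/\sigma$ error term: the Gaussian comparison already costs $\Theta(1)$, which swamps the $\tfrac12\log 2$ gain you are trying to establish. (Relatedly, the Poincar\'e/Cheeger and concentration machinery you cite is used in this paper for the stability estimate of Theorem \ref{mainthdiscrstable}, not to control closeness to Gaussian of a single log-concave law; no Poincar\'e-based argument can defeat the Laplace counterexample above.)

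The correct route — and, as far as one can tell from the cited reference, the route taken there — is to compare $H(X_1+X_2)$ to $H(X_1)$ directly, via the continuous EPI, without ever passing through a Gaussian benchmark. Concretely, one uses the identity $H(X)=h(X+U)$ for $U\sim\mathcal{U}[0,1)$ independent of $X$, applies Shannon's EPI to $X_1+U_1$ and $X_2+U_2$ to get $h(X_1+U_1+X_2+U_2)\ge h(X_1+U_1)+\tfrac12\log 2$, and then the whole technical burden is to bound the discretization mismatch $h(X_1+X_2+U_1+U_2)-H(X_1+X_2)$ from above. That gap is where the $O(\log\sigma/\sigma)$ term genuinely arises, and that is where the log-concavity of $X_1+X_2$ (closure of log-concavity under convolution, which you correctly note) is actually exploited. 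Your observation that the two uniforms $U_1+U_2$ differ from a single $U$ only on a bounded scale, so the mismatch should shrink as the law spreads out, is morally on the right track, but it has to be applied to the smoothing noise, not to a Gaussian comparison of the marginal laws.
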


Our main result in the discrete setting provides a quantitative 
stability estimate of \eqref{discreteEPI}:
\begin{theorem} 
\label{mainthdiscrstable}
 Suppose $X$ has a log-concave distribution on the integers, variance $\Var{(X)} = \sigma^2$ and $X_1, X_2$ are i.i.d.\ copies of $X$. Denote by $Z^{(\Z)}$ a Gaussian with the same mean and variance as $X$, discretized on $\mathbb{Z}$. Assume $\sigma \geq 1547$. Then 
there are absolute constants $C_1, C_2$ such that 
\begin{equation} \label{discretestabilityeq}
D(X\|Z^{(\Z)}) \leq C_1\Bigl( H(X_1+X_2) - H(X_1) - \frac{1}{2}\log{2} \Bigr) + C_2\frac{\log{\sigma}}{\sigma},
\end{equation}
where $D(\cdot\|\cdot)$ denotes the relative entropy.
 \end{theorem}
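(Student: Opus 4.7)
The plan is to lift the problem to the continuous setting by uniform smoothing, apply a continuous Ball--Barthe--Naor-type stability bound to the smoothed variable (with the Poincar\'e constant controlled via Cheeger's inequality), and then transfer the resulting estimate back to the discrete setting, absorbing the approximation error into the $C_2\log\sigma/\sigma$ term using concentration of discrete log-concave distributions.

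Concretely, let $U, U_1, U_2$ be i.i.d.\ $\mathrm{Unif}[0,1)$, independent of everything else, and set $\tilde X := X + U$, $\tilde X_i := X_i + U_i$. Since $X$ is integer valued, $h(\tilde X) = H(X)$ and $\mathrm{Var}(\tilde X) = \tilde\sigma^2 := \sigma^2 + 1/12$. The density $f_{\tilde X}(x) = p_X(\lfloor x \rfloor)$ is unimodal with exponentially decaying tails, both standard consequences of the log-concavity of $p_X$. This allows a direct computation of the one-dimensional Cheeger constant of $\tilde X$: on half-lines, $f_{\tilde X}(x)/\min\{F_{\tilde X}(x),\, 1-F_{\tilde X}(x)\} \geq c/\sigma$, so by Cheeger's inequality $C_P(\tilde X) \leq 4/h_{\mathrm{Ch}}(\tilde X)^2 \leq C\sigma^2$. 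Applying the continuous stability estimate \eqref{kontoyianniseq} to $\tilde X$ then yields
$$
D(\tilde X) \;\leq\; \frac{2 C_P(\tilde X) + \tilde\sigma^2}{\tilde\sigma^2}\,\depi{\frac{1}{2}}(\tilde X, \tilde X) \;\leq\; C_1'\,\depi{\frac{1}{2}}(\tilde X, \tilde X),
$$
where $D(\tilde X)$ denotes the relative entropy of $\tilde X$ from the Gaussian with matching mean and variance.

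Next, I would translate the continuous EPI deficit into the discrete one. Writing $Y := X_1 + X_2$ (log-concave on $\mathbb Z$ with variance $2\sigma^2$) and $W := U_1 + U_2$ (triangular density on $[0,2]$, $h(W) = 1/2$), the chain rule for the mixed-type entropy of $(Y, Y+W)$ gives
$$
h(\tilde X_1 + \tilde X_2) \;=\; h(Y+W) \;=\; H(Y) + h(W) - H(Y \mid Y+W),
$$
which reduces the task to showing $|H(Y \mid Y+W) - 1/2| = O(\log\sigma/\sigma)$. For $z = k + t$ with $t \in [0,1)$, Bayes's rule yields $P(Y = k \mid Y + W = z) = p_Y(k)\,t / [p_Y(k)\,t + p_Y(k-1)(1-t)]$. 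Expanding the binary entropy around $t$ (noting $\int_0^1 H_2 = 1/2$ and that the first-order correction telescopes to zero after summation over $k$), the quadratic error is controlled by $\sum_k p_Y(k-1)\,\epsilon_k^2$ with $\epsilon_k := p_Y(k)/p_Y(k-1) - 1$. By discrete log-concavity, $|\epsilon_k| = O(|k - \mu_Y|/\sigma^2)$ inside the typical range $|k - \mu_Y| \leq M\sigma\log\sigma$, whose total contribution is bounded by $\mathrm{Var}(Y)/\sigma^4 = O(1/\sigma^2)$; outside this range, subexponential concentration of log-concave $Y$ yields tail mass $\sigma^{-c}$, absorbed into $O(\log\sigma/\sigma)$. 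Therefore $\depi{\frac{1}{2}}(\tilde X,\tilde X) \leq [H(X_1+X_2) - H(X_1) - \tfrac{1}{2}\log 2] + O(\log\sigma/\sigma)$.

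For the left-hand side, let $G$ be the Gaussian with mean and variance matching $\tilde X$. The data processing inequality applied to $x \mapsto \lfloor x \rfloor$, together with $\lfloor \tilde X \rfloor = X$, gives $D(X \| \lfloor G \rfloor) \leq D(\tilde X \| G) = D(\tilde X)$. Since $\lfloor G \rfloor$ and $Z^{(\Z)}$ are discretized Gaussians on $\mathbb Z$ whose mean and variance parameters differ only by $O(1)$ (by $1/2$ and $1/12$ respectively), an explicit computation using $\sum_k p_X(k)(k-\mu_X)^2 = \sigma^2$ and subexponential concentration of $X$ shows $|D(X \| Z^{(\Z)}) - D(X \| \lfloor G \rfloor)| = O(1/\sigma^2)$, absorbed into the final error. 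Combining the three steps yields \eqref{discretestabilityeq}. The most delicate point, I expect, is the Poincar\'e bound $C_P(\tilde X) \leq C\sigma^2$: because $f_{\tilde X}$ is only piecewise constant and not log-concave as a continuous density, Bobkov's theorem does not apply directly, and one must instead verify Cheeger's inequality ``by hand'' with an explicit lower bound on the Cheeger constant, exploiting the unimodality and exponential tail decay inherited from the discrete log-concavity of $X$.
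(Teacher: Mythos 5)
Your proposal follows the same high-level structure as the paper's proof: uniform smoothing $X \mapsto X+U$ with $U \sim \mathrm{Unif}[0,1)$; applying the Kontoyiannis--Madiman stability bound \eqref{kontoyianniseq} to the smoothed variable; controlling $C_P(X+U)$ via the Bobkov--Houdr\'e characterization \eqref{Isexpr} of the one-dimensional isoperimetric constant together with Cheeger's inequality \eqref{cheegerineq} and the exponential tail decay inherited from discrete log-concavity (this is precisely the content of Proposition~\ref{poincareprop}); and then transferring both the EPI deficit and the relative entropy from the continuous to the discrete setting with $O(\log\sigma/\sigma)$ errors. You correctly identify the delicate point, namely that $f_{X+U}$ is piecewise constant and not log-concave as a continuous density, so the Cheeger bound must be established directly.

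Where you diverge is in the two transfer steps, which you propose to do by hand where the paper invokes prior results by the same authors. For the EPI-deficit transfer, the paper cites Theorem~1 of \cite{gavalakis:24}; you instead use the mixed-type chain rule $h(Y+W) = H(Y) + h(W) - H(Y\mid Y+W)$ with $Y = X_1+X_2$, $W = U_1 + U_2$, and then do a second-order expansion of the binary entropy $H_2$, exploiting the telescoping of the first-order term and the bound $\sum_k p_Y(k-1)\epsilon_k^2 = O(1/\sigma^2)$ on the quadratic term. This is a correct re-derivation of the estimate behind that citation, though it does need the tail control (where $\epsilon_k$ is not small) that you indicate via subexponential concentration. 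For the final transfer $D(X+U\|Z) \to D(X\|Z^{(\Z)})$, the paper uses Theorem~2.1 of \cite{gavalakis:clt}, which directly gives $D(X\|Z^{(\Z)}) \leq \tfrac{1}{2}\log(2\pi e\sigma^2) - H(X) + 2/\sigma$ and hence $D(X\|Z^{(\Z)}) \leq D(X+U\|Z) + 2/\sigma$; you instead apply the data processing inequality to the floor map to get $D(X\|\lfloor G\rfloor) \leq D(X+U\|G)$ and then argue that the $O(1)$ perturbation of the mean (by $1/2$) and variance (by $1/12$) in passing from $\lfloor G\rfloor$ to $Z^{(\Z)}$ changes the relative entropy by only $O(1/\sigma^2)$. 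Both routes give the same conclusion; the paper's is cleaner (a single cited inequality), while yours is self-contained but requires verifying the parameter-perturbation estimate, which involves comparing two discretized Gaussians that are neither exactly Gaussian nor exactly aligned in their lattice offset. Overall this is essentially the paper's proof with the two cited lemmas unpacked.
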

From the proof of Theorem \ref{mainthdiscrstable} it can be seen that we may take $C_1 = 876489$. An explicit value for $C_2$ may also be extracted, but it will be at least $ 10^{10}$, so we do not pursue this here. We expect that these constants can be improved significantly. 

By a discretized Gaussian $Z^{(\Z)}$
in Theorem \ref{mainthdiscrstable}
(also referred to as quantized Gaussian in \cite{gavalakis:clt}) 
we mean 
$$\mathbb{P}(Z^{(\Z)} = k) = \int_{[k,k+1)}\varphi_{\mu,\sigma^2}(x)dx, 
\quad k \in \mathbb{Z}, 
$$
where $\varphi_{\mu,\sigma^2}$ denotes the Gaussian density
with the same mean $\mu$ and variance $\sigma^2$ as $X$.

Theorem \ref{mainthdiscrstable} says that if the discrete EPI \eqref{discreteEPI} is close to equality,
and the if variance of $X_1$ is large enough (a condition
which is, in general, weaker than the entropy being large), 
then $X_1$ is close in relative entropy to a Gaussian discretized 
on the integers. It can also be viewed as a discrete analogue of 
the continuous stability results of 
Ball, Barthe and Naor \eqref{eq:ball}
and of Kontoyiannis-Madiman \eqref{kontoyianniseq}.

We present the proof of Theorem \ref{mainthdiscrstable} in Section \ref{taosection}. Our strategy is to use \eqref{kontoyianniseq} applied to $X+U$,
where $U\sim {\cal U}[0,1]$, and to approximate the continuous EPI deficit 
by the discrete one using the results of \cite{gavalakis:24}. Note that,
since the density of $X+U$ is piecewise constant, it is not log-concave. 
Nevertheless, we show that it has a finite Poincar{\'e} constant,
bounded by an absolute constant times the variance of $X$: 
\begin{proposition} \label{poincareprop}
Let  $X$ be a discrete log-concave random variable on the integers and let $U$ be a continuous uniform random variable on the unit interval,
independent of $X$. Assume $\sigma^2 := \Var{(X)} \geq 4$. Then 
\[
C_P(X + U) \leq 438244 \cdot \sigma^2,
\]
where $C_P(\cdot)$ denotes the Poincar{\'e} constant. 
\end{proposition}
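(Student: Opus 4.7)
The plan is to apply a one-dimensional Cheeger-type inequality to $Y := X+U$ and to lower-bound the isoperimetric constant of $Y$ by a constant multiple of $1/\sigma$. Since $Y$ has the piecewise-constant density $f_Y(t) = p_{\lfloor t\rfloor}$, where $p_k := \mathbb{P}(X=k)$, the standard 1-D characterization
\[
h(Y) \;=\; \essinf_{t\in\R}\;\frac{f_Y(t)}{\min\bigl(F_Y(t),\,1-F_Y(t)\bigr)},
\]
combined with the monotonicity of $F_Y$ on each interval $[k,k+1)$, reduces the task to lower-bounding
\[
\frac{p_k}{\min\bigl(F_X(k),\,1-F_X(k-1)\bigr)} \qquad \text{over } k\in\mathbb{Z}.
\]
By applying the same argument to $-X$, it suffices to handle $k$ with $F_X(k) \leq 1/2$ and to show $p_k/F_X(k) \geq c/\sigma$ for an explicit absolute $c > 0$; then Cheeger's inequality yields $C_P(Y) \leq 4\sigma^2/c^2$.

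I would then split according to the distance of $k$ from the mode $m$ of $X$. In a \emph{bulk regime} $|k-m| \leq C_1 \sigma$, I would invoke quantitative concentration properties of discrete log-concave distributions to produce an explicit lower bound $p_k \geq c_1/\sigma$; together with $F_X(k) \leq 1/2$, this gives $p_k/F_X(k) \geq 2c_1/\sigma$. Such a pointwise lower bound is a quantitative form of the fact that a log-concave pmf with variance $\sigma^2$ is comparable to $1/\sigma$ throughout a window of length $\Theta(\sigma)$ around its mean, and follows from the maximum-density estimate $p_m \leq C/\sigma$, unimodality, and the variance constraint.

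In the complementary \emph{tail regime} $k < m - C_1 \sigma$, I would exploit log-concavity directly. Since the ratios $r_j := p_j/p_{j-1}$ are non-increasing in $j$ and satisfy $r_j \geq 1$ for $j \leq m$, the estimate $p_j \leq p_k \, r_k^{-(k-j)}$ for $j \leq k$ followed by a geometric sum gives
\[
F_X(k) \;\leq\; \frac{p_k\, r_k}{r_k - 1}, \qquad \text{so} \qquad \frac{p_k}{F_X(k)} \;\geq\; 1 - \frac{1}{r_k}.
\]
For $C_1$ sufficiently large, quantitative log-concave tail bounds force $r_k \geq \rho > 1$ for an explicit absolute $\rho$, so the last expression is bounded below by an absolute constant, and in particular exceeds $c/\sigma$ for all $\sigma \geq 4$. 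A symmetric argument covers $k$ above the median, completing the lower bound $h(Y) \geq c/\sigma$.

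The main obstacle is tracking the constants explicitly to arrive at $438244$. The compounded losses from the factor in Cheeger's inequality, the quantitative bulk estimate (which depends both on the constant in $p_m \leq C/\sigma$ and on the chosen window width $C_1 \sigma$), and the threshold ratio $\rho$ together yield a very small absolute $c$. The hypothesis $\sigma^2 \geq 4$ ensures the bulk window is non-degenerate; verifying the numerical value $438244$ then amounts to a careful optimization over $C_1$ once all constants have been made explicit, which is the most delicate step of the argument.
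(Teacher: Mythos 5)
Your high-level plan coincides with the paper's: apply Cheeger's inequality $C_P \le 4/\Is^2$ to $Y=X+U$, use the Bobkov--Houdr\'e one-dimensional formula for $\Is$, and exploit log-concavity to force exponential decay of the tails. The regime split and the two concrete quantitative claims you plug in, however, contain genuine gaps.

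First, the \emph{bulk} claim fails. It is not true that a log-concave p.m.f.\ with variance $\sigma^2$ satisfies $p_k \ge c_1/\sigma$ throughout a window $|k-m| \le C_1\sigma$ around the mode (or mean). Take $p_k \propto e^{-\beta|k|}$ for $-M \le k \le 0$ and $p_k \propto e^{-\alpha k}$ for $0 \le k \le N$ with $\beta$ a large constant and $\alpha = 1/\sigma$; this is log-concave with variance $\asymp \sigma^2$ and mode $m=0$, but $p_{-5} \asymp e^{-5\beta}/\sigma$ is astronomically smaller than $1/\sigma$ while $|{-5}-m|=5 \le C_1\sigma$. The point is that the variance constraint only controls the slowly-decaying side, and the mode can sit next to a steep cliff. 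Of course in this example the \emph{ratio} $p_{-5}/F_X(-5)$ is still of order one, so the isoperimetric bound survives --- but it is your tail argument, not the bulk estimate, that must rescue it, and your regime split (by distance from the mode) does not allow this.

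Second, the \emph{tail} claim $r_k \ge \rho > 1$ for an absolute $\rho$ is also false. For a discretized Gaussian one has $r_k = p_k/p_{k-1} \approx \exp\bigl((m-k+\tfrac12)/\sigma^2\bigr)$, so at $k = m - C_1\sigma$ this is $\approx 1 + C_1/\sigma \to 1$ as $\sigma \to\infty$. What is actually true, and what you need, is $r_k \ge 1 + c/\sigma$ (equivalently $1-1/r_k \gtrsim 1/\sigma$), but that is a quantitative statement about the decay scale of a log-concave p.m.f.\ and requires proof. This is precisely the content of the paper's Lemma \ref{exponentialp}: for $m \ge 49\sigma + 2\mu_p + 8$ one gets $p(k) \le p(m)\,2^{-(k-\mu)/(m-\mu)+1}$, which gives a geometric ratio $\le 2^{-1/(49\sigma+21)}$, i.e.\ $r_m \ge 1 + \Theta(1/\sigma)$ at that specific scale. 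The paper also bounds the location of the mode, $|\xmp| \le 18\sigma + 11$ after centering, so that the ``bulk'' $\xmp \le k < 49\sigma+10$ contains $O(\sigma)$ terms each $\le p(m)$ by unimodality, which replaces your (false) pointwise lower bound on $p_k$ by a counting argument. A further minor gap: your reduction to $F_X(k)\le 1/2$ by symmetrizing $-X$ misses the interval $[k,k+1)$ on which $F_Y$ crosses $1/2$, where $\min(F_X(k),1-F_X(k-1)) > 1/2$; that case needs a word on its own (it is where $p_k$ is largest, so it is easy, but not covered by the stated reduction).
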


This bound is analogous to the continuous case, up to the large absolute constant. In the proof we use the Cheeger inequality 
$$
C_P(\nu) \leq \frac{4}{\Is^2(\nu)},
$$
where $\Is(\nu)$ denotes the isoperimetric constant of a probability measure $\nu$ (see Section \ref{cheegersec} for definitions). A useful identity obtained by Bobkov and Houdr{\'e} in the one-dimensional case then allows us to exploit the concentration properties of discrete log-concave measures to bound the isoperimetric constant. 

We mention in passing that bounding the Poincar{\'e} constant (or the inverse of the isoperimetric constant) from above in high dimensions is related to the Kannan-Lov{\'a}sz-Simonovits conjecture \cite{KLS:95}, which states that the Poincar{\'e} constant of any (continuous) log-concave random vector is bounded above by an absolute constant, independent of the dimension. The best known upper bound to date is $C \sqrt{\log{n}}$ due to Klartag \cite{klartag:24b}. Of course, our Proposition \ref{poincareprop} is only in dimension $1$. 

Another motivation for considering estimates as in \eqref{discretestabilityeq} comes from the recent proof of Marton's conjecture \cite{GGMT:25, green:25}. The equivalent entropic formulation of the polynomial Freiman-Ruzsa (PFR) conjecture \cite[Theorem 1.8]{GGMT:25} can be seen as a stability estimate in the lower bound 
$$
H(X_1+X_2) -\frac{1}{2}H(X_1) - \frac{1}{2}H(X_2) \geq 0
$$
when $X_1,X_2$ take values in $\mathbb{F}_2^n$, in terms
of a ``distance" known as Ruzsa distance. The extremizers in this case 
are uniform distributions on finite subgroups. 
Obtaining analogous estimates in torsion-free groups is related to PFR over $\mathbb{Z}$, which remains open.  

\section{Proof of de Bruijn's identity} \label{debruijnsec}

In this section we prove Theorems \ref{Thtwopossibilities}, \ref{entropycontTh} and \ref{ourdebruijn}, which provide the necessary tools for the proof
of Shannon's EPI in Theorem~\ref{ourEPI}, including the characterization 
of the case of equality, under no assumptions beyond the existence
of entropies.

We are going to make repeated use of the following submodularity-for-sums inequality, which is a consequence of the data processing inequality for mutual information:
\begin{lemma}[\cite{madiman-itw:08,KM:14}] \label{km14Lemma}
Let $X, Y, Z$ be independent random variables such that $h(Y),h(X+Y),h(Y+Z)$ are all finite. Then  
\be \label{ruzsa}
h(X+Y+Z)  \leq h(X+Y) + h(Y+Z) - h(Y).
\ee
\end{lemma}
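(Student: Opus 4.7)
The natural approach is via the data processing inequality (DPI) for mutual information, combined with the translation-invariance of differential entropy. The key observation is that, for independent random variables $A, B$ with $h(A)$ finite, conditioning on $B$ reduces $A+B$ to a translate of $A$, so $h(A+B \mid B) = h(A)$, and hence
\[
I(B;\, A+B) = h(A+B) - h(A).
\]
This identity holds in $[0,\infty]$ and is finite precisely when $h(A+B)$ is finite. I would apply it twice: with $A = Y$, $B = Z$, noting $h(Y), h(Y+Z)$ are finite by hypothesis, to obtain $I(Z; Y+Z) = h(Y+Z) - h(Y)$; and with $A = X+Y$, $B = Z$, to obtain $I(Z; X+Y+Z) = h(X+Y+Z) - h(X+Y)$, a priori as an equation in $[0,\infty]$.

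Next I would observe that $Z \to Y+Z \to X+Y+Z$ forms a Markov chain: writing $X+Y+Z = (Y+Z) + X$ with $X$ independent of $(Y, Z)$, conditional on $Y+Z$ the variable $X+Y+Z$ is a measurable function of $Y+Z$ and the independent $X$, and is therefore conditionally independent of $Z$. The DPI for mutual information thus yields
\[
I(Z;\, X+Y+Z) \leq I(Z;\, Y+Z).
\]
Substituting the two identities and rearranging gives
\[
h(X+Y+Z) + h(Y) \leq h(X+Y) + h(Y+Z),
\]
which is exactly the claim.

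The main potential obstacle is the interplay with infinite values. The Bobkov--Chistyakov construction (Proposition \ref{bobkovCE}) shows that $h(A+B) = +\infty$ can occur even with $h(A), h(B)$ finite, so \emph{a priori} it is conceivable that $h(X+Y+Z) = +\infty$, in which case reading off $I(Z; X+Y+Z)$ as a naive entropy difference would be delicate. The DPI route sidesteps this issue: mutual information is a well-defined element of $[0,\infty]$ regardless of entropy finiteness, and the inequality $I(Z; X+Y+Z) \leq h(Y+Z) - h(Y) < \infty$, combined with $h(X+Y) < \infty$, forces $h(X+Y+Z) < \infty$ \emph{a posteriori}, thereby retroactively validating the identity and completing the argument without any further moment or regularity assumption on $X, Y, Z$.
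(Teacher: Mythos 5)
Your proposal is correct and follows essentially the same route the paper relies on: the submodularity inequality is derived from the data processing inequality for mutual information applied along the Markov chain $Z \to Y+Z \to X+Y+Z$, using $I(Z;Y+Z) = h(Y+Z) - h(Y)$ and $I(Z;X+Y+Z) = h(X+Y+Z) - h(X+Y)$ via translation invariance, with the finiteness of $h(X+Y+Z)$ obtained a posteriori from $I(Z;X+Y+Z) \le I(Z;Y+Z) < \infty$ and the finiteness of the conditional entropy $h(X+Y+Z\mid Z) = h(X+Y)$. This is precisely the reasoning the paper gives in the remark following the lemma (citing the original proof and noting that the identity $I(U;V) = h(V) - h(V\mid U)$ needs only $-\infty < h(V\mid U) < \infty$, so finiteness of $h(X+Y+Z)$ is not an extra hypothesis), so the two arguments coincide.
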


In \cite{KM:14} it is assumed that all entropies that appear exist and are finite. However, a careful examination of the proof of Lemma \cite{KM:14} shows that finiteness of $h(X+Y+Z)$ is not necessary. This is because the data processing inequality holds without finiteness assumptions and the expression $I(X;Y) = h(Y) -h(Y|X)$ is valid at least as long $-\infty < h(Y|X) < \infty$. In particular, if $h(Y), h(X+Y)$ and $h(Y+Z)$ are all finite, then $h(X+Y+Z)$ is finite as well. Inequality \eqref{ruzsa} was first observed in the discrete setting in \cite{KV:83}.

\begin{lemma} \label{gaussfinite}
Let $X$ be a random variable in $\mathbb{R}$ with density $f$ and finite 
differential entropy $-\infty < h(X) < \infty$. If there is a random variable 
$Y$, independent of $X$, with finite differential entropy and with $h(X+Y) < \infty$, then 
$$
h(X+Z) < \infty,
$$
for every random variable $Z$ independent of $X$ with finite variance. 
\end{lemma}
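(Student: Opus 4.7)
The plan is to deduce the lemma directly from the dichotomy of Theorem~\ref{Thtwopossibilities} by checking that every $Z$ with finite variance automatically meets the two conditions placed there on the ``second slot'': $h(Z) < \infty$ (in the paper's convention) and $Z \notin \mathcal{C}_{\rm BC}$. The hypothesis of the lemma, namely the existence of an independent $Y$ with $h(Y)$ finite and $h(X+Y) < \infty$, already forces $X \notin \mathcal{C}_{\rm BC}$: membership in $\mathcal{C}_{\rm BC}$ would require $h(X+W) = \infty$ for every independent $W$ with $h(W) > -\infty$, and this is directly contradicted by $W = Y$.

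To verify the two conditions for $Z$, I would split on whether $Z$ has a density. If $Z$ does not (for instance $Z$ discrete or singular), then by convention $h(Z) = -\infty$, so $h(Z) < \infty$, and $Z \notin \mathcal{C}_{\rm BC}$ holds trivially since $\mathcal{C}_{\rm BC}$-membership requires finite entropy. If $Z$ does have a density, the Gaussian maximum-entropy bound gives $h(Z) \le \tfrac{1}{2}\log(2\pi e \Var{(Z)}) < \infty$; to rule out $Z \in \mathcal{C}_{\rm BC}$ in the remaining case where $h(Z)$ is finite, pair $Z$ with an independent standard Gaussian $G$, noting $h(G) > -\infty$ and
\[
h(Z+G) \;\le\; \tfrac{1}{2}\log\!\bigl(2\pi e(\Var{(Z)}+1)\bigr) \;<\; \infty,
\]
which contradicts the defining property of $\mathcal{C}_{\rm BC}$ applied to the pair $(Z, G)$. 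Applying Theorem~\ref{Thtwopossibilities} to $X$ then delivers $h(X+Z) < \infty$, as desired.

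The only real subtlety is keeping straight the bookkeeping between the conventions ``$h(\cdot) < \infty$'' (which permits $-\infty$ or an undefined integral) and ``$h(\cdot)$ finite'' (which excludes these), but both are absorbed cleanly by the definition of $\mathcal{C}_{\rm BC}$ requiring finite entropy for membership. A more self-contained route would sidestep Theorem~\ref{Thtwopossibilities} and work directly from Lemma~\ref{km14Lemma}: combining $h(X+Z) \le h(X+Y+Z)$ (conditioning reduces entropy, after coupling so that $Z$ is independent of both $X$ and $Y$) with $h(X+Y+Z) \le h(X+Y) + h(Y+Z) - h(Y)$ (submodularity with $Y$ as middle variable) would reduce the problem to showing $h(Y+Z) < \infty$, which appears to require a truncation of $Z$ together with a careful passage to the limit. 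This alternative looks considerably more delicate than the dichotomy-based approach above, and bounding $h(Y+Z)$ from above is the step I would expect to be the main technical obstacle.
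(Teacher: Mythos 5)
Your main route is circular: in the paper, Theorem~\ref{Thtwopossibilities} is proved \emph{after} Lemma~\ref{gaussfinite} and its proof invokes Lemma~\ref{gaussfinite} explicitly (it is precisely how the finiteness of $h(X+Z)$ and $h(Z+Y)$ for a Gaussian $Z$ is obtained there). So deducing Lemma~\ref{gaussfinite} from Theorem~\ref{Thtwopossibilities} reverses the logical dependency. Your verification that any finite-variance $Z$ satisfies $h(Z)<\infty$ and $Z\notin\mathcal{C}_{\rm BC}$ is fine in itself, but it cannot rescue the argument because the dichotomy it feeds into is not available at this stage.

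Your sketched alternative is on the right track in spirit (submodularity via Lemma~\ref{km14Lemma} is indeed the engine), but the choice of middle variable is what makes it stall. Using $Y$ as the middle term reduces the problem to showing $h(Y+Z)<\infty$, which is of the same difficulty as what you started with, since nothing controls the variance of $Y$. The paper instead truncates $X$: it sets $X_n$ to be $X$ conditioned on $\{|X|\le n\}$, shows $h(X_n)>-\infty$ via the identity $I(X;\mathbb{I}_{\{|X|\le n\}})=h(X)-h(X\mid \mathbb{I}_{\{|X|\le n\}})=H(\mathbb{I}_{\{|X|\le n\}})\in[0,1]$, shows $h(X_n+X')<\infty$ (where $X'$ is an i.i.d.\ copy, using $h(X+X')\le 2h(X+Y)-h(Y)<\infty$ and conditioning), and then applies submodularity with the \emph{bounded} random variable $X_n$ in the middle slot:
$$
h(X'+Z)\;\le\; h(X'+X_n)+h(X_n+Z)-h(X_n)\;<\;\infty,
$$
the key point being that $X_n+Z$ has finite variance (as $X_n$ is bounded and $Z$ has finite variance), so $h(X_n+Z)<\infty$ by the Gaussian maximum-entropy bound. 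That is the missing idea: truncate the variable whose entropy is under control, not the auxiliary $Y$, so that the middle variable of the submodularity inequality has finite variance when added to $Z$.
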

\begin{proof}
First we note that if $X,X^{\prime}$ are i.i.d., then we have by Lemma \ref{km14Lemma}
$$
h(X+\Xp) \leq h(X+Y+\Xp) \leq 2h(X+Y) - h(Y) <\infty.
$$

Now we define the binary random variable 
$$I_n := \mathbb{I}_{\{|X| \leq n\}},$$
where we choose $n$ such that 
$p_n := \mathbb{P}(I_n=1)$ is strictly between 0 and 1.
We also let $X_n$ be distributed as~$X$ conditioned on $\{I_n = 1\}$.
Writing $I(V;W)$ for the mutual information \cite{cover:book2}
between two random variables $V$ and $W$,
we observe that $I(X;I_n)$ can be expressed 
\cite{pinsker:book}
in two different ways as 
\begin{align} 
\label{mutualinfoexpr}
I(X;I_n) &= H(I_n) \\
&= h(X) - h(X|I_n),
\end{align}
where $h(\cdot|\cdot)$ denotes the conditional
differential entropy~\cite{cover:book2}, and
$H(I_n) \in [0,1]$.
Thus, we must have
$$
-\infty < h(X|I_n) <\infty.
$$
Therefore, writing $h(W|A)$ for the differential entropy
of a random variable with the distribution of $W$ conditioned
on the event $A$, and
using $p_nh(X_n) + (1-p_n)h(X|I_n=0) = h(X|I_n)$,
we see that 
$$
h(X_n) > -\infty.
$$

Finally, since conditioning reduces entropy, we have 
\begin{align*}
\infty > h(X+\Xp) \geq h(X+\Xp|I_n) &= p_n h(X_n+\Xp) + (1-p_n)h(X+\Xp|I_n = 0) \\
&\geq p_n h(X_n+\Xp) + (1-p_n)h(\Xp) 
\end{align*}
and therefore 
$$
h(X_n+\Xp) <\infty.
$$

Putting everything together, and using Lemma \ref{km14Lemma}, we have 
$$
h(\Xp+Z) \leq h(\Xp+X_n+Z) \leq h(\Xp+X_n) + h(X_n+Z) - h(X_n) < \infty
$$
where $h(X_n+Z)$ is finite since $X_n + Z$ has finite variance 
(as $X_n$ is bounded). 
\end{proof}

In order to prove Theorem \ref{Thtwopossibilities} we only need to generalize Lemma \ref{gaussfinite} to all random variables $Z \notin \mathcal{C}_{\rm BC}$ (with finite entropy), rather than $Z$ having finite variance:

\begin{proof}[Proof of Theorem \ref{Thtwopossibilities}]
Let $X$ be a random variable with finite differential entropy. We need to show that if there exists a random variable $W$ with finite differential entropy itself, independent of $X$, such that 
$$h(X+W) <\infty,$$
and if $Y,V$ are two independent random variables, independent from $X, W$, with finite differential entropies, such that 
$$h(Y+V) < \infty,$$
then 
$$
h(X+Y) < \infty.
$$

Let $Z$ be a standard Gaussian. By another application of Lemma \ref{km14Lemma}

$$
h(X+Y) \leq h(X+Z+Y) \leq h(X+Z) + h(Z+Y) - h(Z) <\infty,
$$
where the finiteness of the last two entropies of the sums
follows by Lemma \ref{gaussfinite}.
\end{proof}

\begin{lemma} \label{truncationlemma}
Let $X$ be a random variable with finite entropy, $Z$ a standard Gaussian independent of $X$, and $t_0 >0$. Assume that $h(X+\sqrt{t_0}Z) < \infty$.
Then 
$$
\lim_{n \to \infty} h(X+\sqrt{t_0}Z\mid|X| \leq n) = h(X+\sqrt{t_0}Z)
$$
\end{lemma}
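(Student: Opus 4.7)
Write $g$ for the density of $Y := X+\sqrt{t_0}Z$ and $g_n$ for the density of $Y_n := X_n+\sqrt{t_0}Z$, where $X_n$ is $X$ conditioned on $\{|X|\leq n\}$ and $p_n := \mathbb{P}(|X|\leq n)$. The goal is to show $h(g_n)\to h(g)$. My strategy is to compare $g_n$ with $g$ by brute force, using three simple facts: (i) $g_n\leq g/p_n$ pointwise, (ii) $g_n\to g$ pointwise, and (iii) $g$ is bounded and $g|\log g|\in L^1$.

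Fact (i) follows because the density of $X$ restricted to $[-n,n]$ is pointwise dominated by $f$, so convolving with $\varphi_{t_0}$ gives $p_n g_n(y)=\int_{-n}^{n}\varphi_{t_0}(y-x)f(x)\,dx\leq g(y)$. Fact (ii) is dominated convergence in the convolution, with dominating function $f$: $p_n g_n(y)\to g(y)$ for every $y$, and $p_n\to 1$. Fact (iii) uses that $g$ is bounded above by $(2\pi t_0)^{-1/2}$ (since it is a convolution with $\varphi_{t_0}$), so $\int g\log g\,\mathbb{I}_{\{g\geq 1\}}\leq \log\bigl((2\pi t_0)^{-1/2}\bigr)$, while $-\int g\log g\,\mathbb{I}_{\{g<1\}}\leq -\int g\log g+C=h(Y)+C<\infty$ by hypothesis, so $\int g|\log g|\,dy<\infty$.

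The key identity will then be
\[
h(Y_n)\;=\;-\int g_n\log g\,dy\;-\;D(g_n\,\|\,g),
\]
which holds because $g_n\ll g$. For the relative-entropy term, (i) gives $g_n/g\leq 1/p_n$ on $\{g>0\}$, so $D(g_n\|g)\leq -\log p_n\to 0$. For the first term, $g_n\log g\to g\log g$ pointwise by (ii), and $|g_n\log g|\leq (g/p_n)|\log g|\leq 2g|\log g|$ for $n$ large enough that $p_n\geq 1/2$; by (iii) and dominated convergence, $\int g_n\log g\,dy\to\int g\log g\,dy=-h(Y)$. Combining the two limits yields $h(Y_n)\to h(Y)$, which is the claim.

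I do not anticipate a serious obstacle: the only point that needs a moment's care is that $h(Y_n)$ is well-defined and finite (so that the identity above makes sense), but this is immediate because $X_n$ is bounded, hence $Y_n$ has finite variance and $h(Y_n)\in[\tfrac12\log(2\pi e t_0),\tfrac12\log(2\pi e\,\mathrm{Var}(Y_n))]$. Notably, the argument uses neither a moment bound on $X$ nor the finiteness of $h(X)$ as such; only the assumed finiteness of $h(X+\sqrt{t_0}Z)$ is required.
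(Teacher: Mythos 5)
Your proof is correct, and it takes a genuinely different route from the paper's. The paper proceeds by a hands-on splitting of $\R$ into the sublevel and superlevel sets $\{g_{t_0}\leq 1/e\}$, $\{1/e<g_{t_0}\leq 1\}$, $\{g_{t_0}>1\}$ of the limiting density, and then applies monotone convergence on the first and last pieces (exploiting the monotonicity of $x\mapsto x\log x$ off $[0,1/e]$ and the pointwise monotone convergence $p_n g_n\uparrow g$) and bounded convergence on the middle piece, which has Lebesgue measure at most $e$. Your argument instead isolates the relative-entropy part via the decomposition $h(g_n)=-\int g_n\log g\,dy-D(g_n\|g)$ and kills it with the one-line bound $D(g_n\|g)\le-\log p_n$ coming from $g_n/g\le 1/p_n$, leaving only a dominated-convergence step for $\int g_n\log g$ with dominating function $2g|\log g|\in L^1$; this is shorter and more conceptual. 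The paper's approach has the (minor) advantage of not requiring the a priori verification that $\int g|\log g|<\infty$, which you correctly supply from $h(X+\sqrt{t_0}Z)<\infty$ together with the uniform bound $g\le(2\pi t_0)^{-1/2}$. Your closing observation -- that only $h(X+\sqrt{t_0}Z)<\infty$ is used, not the finiteness of $h(X)$ -- is accurate and worth noting; the paper's proof likewise does not actually use $h(X)$ being finite. Two trivial nits: the dominating function in your Fact~(ii) should be $\varphi_{t_0}(y-\cdot)f(\cdot)$ rather than $f$ alone, and the split in the identity should be justified by remarking that $g>0$ everywhere (since $g=f*\varphi_{t_0}$) and both $\int g_n|\log g|$ and $\int g_n|\log(g_n/g)|$ are finite, which you have implicitly established.
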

\begin{proof}
Write $f$ for the density of $X$ and $\gnt$ for the density of $X+\sqrt{t_0}Z$
conditioned on $\{|X| \leq n\}$. Then
$$
\gnt(z) = \frac{1}{p_n}\int_{|x|\leq n}{f(x)\varphi_{t_0}(z-x)dx},
\quad z\in\R,
$$
where $\varphi_t$ denotes the ${\cal N}(0,t)$ density,
and
$p_n = \mathbb{P}{(|X|\leq n)} \to 1$ as $n\to \infty$.
Note also that $p_n\gnt \uparrow g_{t_0}$ as $n\to \infty$ pointwise, where $g_{t_0} = f*\varphi_{t_0}$. 

Using $p_n \to 1$, we have 
\begin{align} \nonumber
  &h(X+\sqrt{t_0}Z\mid|X| \leq n) \\
	\nonumber
  &= -\int_{0\leq g_{t_0}\leq \frac{1}{e}}{\gnt(z)\log{\gnt(z)} dz} -\int_{ \frac{1}{e}<g_{t_0} \leq 1}{\gnt(z)\log{\gnt(z)} dz} -\int_{1<g_{t_0}}{\gnt(z)\log{\gnt(z)} dz} \\ \nonumber
  &= -\frac{1}{p_n}\int_{0\leq g_{t_0}\leq \frac{1}{e}}{p_n\gnt(z)\log{(p_n\gnt(z))} dz} -\frac{1}{p_n}\int_{ \frac{1}{e}<g_{t_0} \leq 1}{p_n\gnt(z)\log{(p_n\gnt(z))} dz}  \\ \label{intsplit}
  &\quad-\frac{1}{p_n}\int_{1<g_{t_0}}{p_n\gnt(z)\log{(p_n\gnt(z))} dz} + o(1).
\end{align}
We consider the three integral terms in \eqref{intsplit} separately. 

In the range $0\leq g_{t_0}\leq \frac{1}{e}$, using the fact that $-x\log{x}$ is non-decreasing for $x\leq \frac{1}{e},$ we have 
$$-p_n\gnt\log{(p_n\gnt)} \uparrow -g_{t_0}\log{g_{t_0}}.$$ Therefore, by monotone convergence, 
\begin{equation} \label{firstint}
    -\int_{0\leq g_{t_0} \leq \frac{1}{e}}{p_n\gnt\log{(p_n\gnt)}} \uparrow -\int_{0\leq g_{t_0} \leq \frac{1}{e}}g_{t_0}\log{g_{t_0}}.
\end{equation}
For the second term, note that $\mathrm{Leb}(\{\frac{1}{e}<g_{t_0} \leq 1\}) \leq e $ because of $\int{g_{t_0} = 1}$, where $\mathrm{Leb}$ denotes the Lebesgue measure. In addition, $-x\log{x}$ is bounded for $0\leq x \leq 1.$ Therefore, by bounded convergence
\begin{equation} \label{secint}
-\int_{ \frac{1}{e}<g_{t_0} \leq 1}{p_n\gnt(z)\log{(p_n\gnt(z))} dz} \to -\int_{ \frac{1}{e}<g_{t_0} \leq 1}{g_{t_0}(z)\log{(g_{t_0}(z))} dz}.
\end{equation}
Finally, the third term is 
\begin{align} \nonumber
&-\frac{1}{p_n}\int_{1<g_{t_0}}{p_n\gnt(z)\log{(p_n\gnt(z))} dz} \\ \label{twolastints}
&= -\frac{1}{p_n}\int_{1<g_{t_0}}{\mathbb{I}_{\{p_n\gnt(z) \leq 1\}}p_n\gnt(z)\log{(p_n\gnt(z))} dz}  -\frac{1}{p_n}\int_{1<g_{t_0}}{\mathbb{I}_{\{p_n\gnt(z) > 1\}}p_n\gnt(z)\log{(p_n\gnt(z))} dz}.
\end{align}
Now, since $p_n\gnt \to g_{t_0}$ pointwise, the integrand of the 
first term in \eqref{twolastints} converges pointwise to $0$. Using a 
similar argument as in \eqref{secint}, we see that that integrand is 
bounded and the integral is over a set of Lebesgue measure at most $1$. 
Hence, by bounded convergence, 
\begin{equation} \label{vanishint}
-\frac{1}{p_n}\int_{1<g_{t_0}}{\mathbb{I}_{\{p_n\gnt(z) \leq 1\}}p_n\gnt(z)\log{(p_n\gnt(z))} dz} \to 0.
\end{equation}
On the other hand, for the second integral in \eqref{twolastints} we have, using that $x\log{x}$ is non-decreasing for $x > 1 > \frac{1}{e}$ and monotone convergence again, 
\begin{equation} \label{finalint}
    \int_{1<g_{t_0}}{\mathbb{I}_{\{p_n\gnt(z) > 1\}}p_n\gnt(z)\log{(p_n\gnt(z))} dz} \uparrow \int_{1<g_{t_0}}{g_{t_0}(z)\log{(g_{t_0}(z))} dz}.
\end{equation}
Substituting \eqref{firstint}, \eqref{secint}, \eqref{vanishint} and \eqref{finalint} into \eqref{intsplit} gives the result. 
\end{proof}

We are ready to give the proof of Theorem \ref{entropycontTh}.
\begin{proof}[Proof of Theorem \ref{entropycontTh}.]
Note first that by Lemma \ref{gaussfinite} we have 
$$
h(X+\sqrt{t_0}Z) <\infty.
$$
for some (in fact for every) $t_0 >0$.

Since $h(X+\sqrt{t}Z) \geq h(X)$, it suffices to show that 
\be \label{sufflimsup}
\limsup_{t\to 0}h(X+\sqrt{t}Z) \leq h(X).
\ee
As in the proof of Lemma \ref{gaussfinite}, define  
$$I_n := \mathbb{I}_{\{|X| \leq n\}},$$ 
let $p_n = \mathbb{P}(I_n=1)$,
and consider the mutual information 
\begin{align} \label{mutualinfosecond}
I(X+\sqrt{t}Z;I_n) = h(X+\sqrt{t}Z) - h(X+\sqrt{t}Z|I_n).
\end{align}
Similarly to \eqref{mutualinfoexpr}, we have
\be \label{uniformint}
0\leq I(X+\sqrt{t}Z;\mathbb{I}_{\{|X|\leq n\}}) \leq H(I_n) \to 0 \quad \text{as } n \to \infty, \text{ uniformly in } t. 
\ee
Therefore, by \eqref{mutualinfosecond}, it suffices to show that,
 for any $\epsilon > 0$.
\be \label{sufftrunc}
\limsup_{t\to 0}h(X+\sqrt{t}Z|I_n) \leq h(X) +\epsilon \quad \text{for all } n \text{ large enough,}
\ee
as this would yield \eqref{sufflimsup}.

Now 
\be \label{condenttrunc}
h(X+\sqrt{t}Z|I_n) = p_nh(X+\sqrt{t}Z\mid |X|\leq n) + (1-p_n)h(X+\sqrt{t}Z\mid|X|>n).
\ee
But for any $t < t_0,$
\be \label{sandwich}
(1-p_n)h(X\mid|X|>n) \leq (1-p_n)h(X+\sqrt{t}Z\mid|X|>n) \leq (1-p_n)h(X+\sqrt{t_0}Z\mid|X|>n). 
\ee
Since also
$$
h(X+\sqrt{t_0}Z|I_n) = p_nh(X+\sqrt{t_0}Z\mid |X|\leq n) + (1-p_n)h(X+\sqrt{t_0}Z\mid|X|>n),
$$
we see by Lemma \ref{truncationlemma} together with \eqref{mutualinfosecond} and \eqref{uniformint} that 
$$
(1-p_n)h(X+\sqrt{t_0}Z\mid|X|>n) \to 0 \quad \text{ (uniformly in } t < t_0).
$$
Furthermore, the lower bound in \eqref{sandwich} also vanishes as $n \to \infty$, uniformly in $t< t_0$, since by an easy application of dominated convergence 
\be \label{enttruncconv}
h(X\mid|X|\leq n) \to h(X),
\ee
and as in \eqref{uniformint},
$$
h(X|I_n) \to h(X). 
$$
Therefore,
$$
(1-p_n)h(X+\sqrt{t}Z\mid|X|>n) \to 0 \quad \text{uniformly in } t<t_0.
$$
Now assuming $n$ is large enough and taking the $\limsup$ as $t\to0$ in \eqref{condenttrunc},
\begin{align}
\limsup_{t\to 0}h(X+\sqrt{t}Z|I_n) &\leq p_n\limsup_{t\to 0} h(X+\sqrt{t}Z\mid |X|\leq n) + \frac{\epsilon}{2} \nonumber\\
&\leq \limsup_{t\to 0} h(X+\sqrt{t}Z\mid |X|\leq n) + \frac{\epsilon}{2} 
\nonumber\\ \label{finitevarianceresult}
&= h(X\mid|X|\leq n) +\frac{\epsilon}{2} \\ \label{lasteentjust}
&\leq h(X) +\epsilon,
\end{align}
where in \eqref{finitevarianceresult} we used that, conditional on the event $\{|X| \leq n\}$, $X$ has finite variance (as it is bounded) and therefore, $h(X+\sqrt{t}Z\mid|X|\leq n) \to h(X\mid|X|\leq n)$ as in 
equations (2.21)--(2.22)
in the proof of Lemma 2.1 of \cite{barron:cltTR}.
In \eqref{lasteentjust} we used \eqref{enttruncconv} and assumed $n$ is large enough.

This establishes \eqref{sufftrunc} and, since $\epsilon>0$ can be taken arbitrary, the proof is complete. 
\end{proof}

As in the proof of Lemma \ref{truncationlemma}, if $X$ has density $f$, 
we write 
\[
g_t = f * \varphi_t, 
\]
for
the density of the convolution of with $\varphi_t$, a Gaussian with 
mean zero and variance $t$. We will need the classical fact that 
$g_t$ always satisfies the heat equation; see, e.g., 
\cite[Lemma 6.2]{barron:cltTR}.

\begin{lemma} 
\label{diffusion}
    Let $X$ be any random variable on $\R$. Then
$$
          \frac{\partial}{\partial t}g_t(y) = \frac{1}{2}\frac{\partial^2}{\partial y^2}g_t(y), \quad \text{ for every } t > 0, \;y \in \R.
$$
\end{lemma}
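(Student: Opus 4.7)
The plan is to reduce the statement to the standard fact that the Gaussian kernel $\varphi_t$ itself solves the heat equation, and then push the derivatives through the convolution integral against $f$ using the Leibniz rule. Since $f$ is only assumed to be a density (no moment or regularity assumption), I must be careful that the dominating functions I produce are integrable against $f(x)\,dx$; because $\int f = 1$, any bound that is uniform in $x$ for $(t,y)$ in a small neighborhood will suffice.

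First I would directly differentiate the explicit formula $\varphi_t(y) = (2\pi t)^{-1/2}\exp(-y^2/(2t))$ to check that
$$
\partial_t \varphi_t(y) = -\frac{1}{2t}\varphi_t(y) + \frac{y^2}{2t^2}\varphi_t(y), \qquad \partial_{yy}\varphi_t(y) = -\frac{1}{t}\varphi_t(y) + \frac{y^2}{t^2}\varphi_t(y),
$$
which gives $\partial_t \varphi_t = \tfrac12 \partial_{yy}\varphi_t$ on $(0,\infty)\times\R$. The same identity with $y$ replaced by $y-x$ then holds pointwise, and if differentiation can be exchanged with the integral $\int f(x)\varphi_t(y-x)\,dx$, the result follows at once.

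Next I would justify the exchange. Fix $t_0>0$ and $y_0\in\R$ and work on a neighborhood $t\in[t_0/2,2t_0]$, $y$ in a compact interval $K$. Using the elementary bound $s^2 e^{-s^2/(2t)}\le 2t/e$ with $s=y-x$, each of $\varphi_t(y-x)$, $(y-x)\varphi_t(y-x)$ and $(y-x)^2\varphi_t(y-x)$ is bounded by a constant depending only on $t_0$, uniformly in $x\in\R$ and $(t,y)$ in the above neighborhood. Consequently $|\partial_t\varphi_t(y-x)|$ and $|\partial_{yy}\varphi_t(y-x)|$ are bounded by a constant $C=C(t_0)$, and since $\int f(x)\,dx = 1$, the constant function $C$ is an integrable majorant with respect to $f(x)\,dx$. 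The Leibniz rule (via dominated convergence applied to difference quotients) then allows both $\partial_t$ and $\partial_{yy}$ to be interchanged with the integral, yielding
$$
\partial_t g_t(y) \;=\; \int f(x)\,\partial_t\varphi_t(y-x)\,dx \;=\; \tfrac{1}{2}\int f(x)\,\partial_{yy}\varphi_t(y-x)\,dx \;=\; \tfrac{1}{2}\partial_{yy} g_t(y).
$$

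The only mildly delicate point is producing these dominating bounds without assuming any moments on $X$; this is handled precisely by the fact that polynomial-times-Gaussian factors of $(y-x)$ are globally bounded uniformly in $x$ once $t$ stays bounded away from $0$ and $\infty$. Everything else is routine calculus, and the statement holds for every $t>0$ and every $y\in\R$.
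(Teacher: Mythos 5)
Your proof is correct and complete. The paper does not actually prove this lemma; it simply cites it as a classical fact (Barron's Lemma 6.2), so there is no ``paper proof'' to compare against. Your argument is the standard one: verify pointwise that the Gaussian kernel $\varphi_t$ solves the heat equation, then push $\partial_t$ and $\partial_{yy}$ through the convolution $g_t = f * \varphi_t$ by dominated convergence, using the key observation that $(y-x)^k \varphi_t(y-x)$ is bounded uniformly in $x$ once $t$ is confined to a compact subinterval of $(0,\infty)$, which makes the constant function an integrable majorant against $f(x)\,dx$ with no moment assumptions on $X$ required. One small point worth making explicit if this were written up in full: the exchange for $\partial_{yy}$ needs two applications of the dominated-convergence/difference-quotient argument (one for each $\partial_y$), and the intermediate factor $(y-x)\varphi_t(y-x)$ must also be dominated uniformly; your bounds already cover this, but it deserves a sentence.
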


The next lemma generalizes \cite[Lemma 6.3]{barron:cltTR}, in that we omit 
the finite variance assumption on $X$ and only assume that there exists an 
independent random variable $Y$ such that the entropy of $X+Y$ is finite: 

\begin{lemma} \label{lemmaexchange}
Let $X$ be a random variable with density $f$, finite differential entropy, and for which there exists an independent random variable $Y$ with finite differential entropy itself such that $h(X+Y) < \infty.$ Then, writing as above $g_t = f * \varphi_t$, we have
$$
\frac{\partial}{\partial t}h(X+\sqrt{t}Z) = -\int{\Big(\frac{\partial}{\partial t}g_t(y)\Big)\log{g_t(y)}dy},
\quad t>0.
$$
\end{lemma}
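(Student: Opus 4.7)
The identity we wish to establish is exactly differentiation under the integral sign applied to $h(X+\sqrt{t}Z) = -\int g_t(y)\log g_t(y)\,dy$. This representation is legitimate because Lemma~\ref{gaussfinite} combined with the hypothesis on $Y$ gives $h(X+\sqrt{t}Z)<\infty$ for every $t>0$, while $h(X+\sqrt{t}Z)\geq h(X)>-\infty$ by the entropy power inequality (or directly). Formally,
\[
\frac{\partial}{\partial s}\bigl(g_s\log g_s\bigr)(y) \;=\; \frac{\partial g_s}{\partial s}(y)\bigl(1+\log g_s(y)\bigr),
\]
and, since $\int g_s\,dy\equiv 1$, we get $\int \frac{\partial g_s}{\partial s}(y)\,dy=0$, so the $+1$ term vanishes upon integration. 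The lemma therefore reduces to justifying the exchange of $\frac{d}{dt}$ and $\int\cdot\,dy$.

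The plan is to write the difference quotient $\delta^{-1}\bigl(h(X+\sqrt{t+\delta}Z)-h(X+\sqrt{t}Z)\bigr)$ as $-\int \frac{\partial g_{s^\ast(y,\delta)}}{\partial s}(y)\bigl(1+\log g_{s^\ast(y,\delta)}(y)\bigr)\,dy$ via the mean value theorem applied to $s\mapsto g_s(y)\log g_s(y)$, and then pass to the limit $\delta\to 0$ by dominated convergence. The crux is to produce, for each $t>0$, a small interval $J\ni t$ and an integrable $G:\R\to[0,\infty)$ with
\[
\sup_{s\in J}\Bigl|\tfrac{\partial g_s}{\partial s}(y)\bigl(1+\log g_s(y)\bigr)\Bigr|\;\leq\;G(y),\qquad y\in\R.
\]

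To construct $G$, fix $0<t_1<t$ and take $J\subset (t_1,\infty)$ a small closed interval around $t$. By Lemma~\ref{gaussfinite} and the hypothesis on $Y$, $h(X+\sqrt{t_1}Z)<\infty$, hence $g_{t_1}|\log g_{t_1}|$ is integrable; this integrability plays the role that finite variance plays in Barron's original argument. The heat equation (Lemma~\ref{diffusion}) combined with the semigroup identity $g_s=g_{t_1}\ast\varphi_{s-t_1}$ gives $\tfrac{\partial g_s}{\partial s}=\tfrac{1}{2}\,g_{t_1}\ast\varphi_{s-t_1}^{\prime\prime}$, and the pointwise Gaussian estimate $|\varphi_\tau^{\prime\prime}(z)|\leq c(\tau)\,\varphi_{2\tau}(z)$ yields $|\tfrac{\partial g_s}{\partial s}(y)|\leq C\, g_{2s-t_1}(y)$ uniformly in $s\in J$. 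Also $g_s(y)\leq (2\pi t_1)^{-1/2}$ uniformly in $s\in J$, so $(1+\log g_s)^+$ is bounded.

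The main obstacle is the tail region where $|\log g_s|$ is large and no moment bound is available to absorb it; this is precisely where Barron uses $\int y^2 g_s(y)\,dy<\infty$. Our substitute is to dominate $g_{2s-t_1}(y)\,|\log g_s(y)|$ by an integrable function expressible in terms of $g_{t_1}(y)|\log g_{t_1}(y)|$ and of one or two fixed Gaussian convolutions $g_{r_0}(y)$ with $r_0>t_1$. Concretely, on $\{g_s\leq 1\}$ we exploit the convexity inequality $-g_{t_1}\ast\varphi_{s-t_1}\,\log(g_{t_1}\ast\varphi_{s-t_1})\leq -(g_{t_1}\log g_{t_1})\ast\varphi_{s-t_1}+\text{bounded terms}$ (Jensen applied to $-x\log x$ together with $g_s\leq M$), which translates pointwise bounds on $g_{t_1}|\log g_{t_1}|$ through the convolution. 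Combining these estimates with the uniform pointwise bound on $\tfrac{\partial g_s}{\partial s}$ obtained above produces the required integrable dominator $G$, and dominated convergence delivers the claimed identity.
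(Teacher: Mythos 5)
Your overall framing — replace Barron's finite-variance hypothesis with $h(X+\sqrt{t_1}Z)<\infty$ via Lemma~\ref{gaussfinite}, then exchange $\partial_t$ and $\int\,dy$ by the mean value theorem and dominated convergence, after bounding $|\partial_s g_s|$ above by a constant times a Gaussian convolution with larger variance — is exactly the paper's strategy. But the crucial step, production of an integrable dominator for $|\partial_s g_s|\,|\log g_s|$ on the tails, is where your sketch breaks down, and the gap is not cosmetic.

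Your proposal rests on the ``convexity'' inequality
\[
-\,(g_{t_1}*\varphi_{s-t_1})\log(g_{t_1}*\varphi_{s-t_1}) \;\leq\; -\,(g_{t_1}\log g_{t_1})*\varphi_{s-t_1} \;+\;\text{bounded},
\]
but $\phi(x)=-x\log x$ is \emph{concave}, so Jensen applied to $g_s(y)=\int g_{t_1}(z)\varphi_{s-t_1}(y-z)\,dz$ gives the \emph{opposite} inequality,
\[
-\,g_s(y)\log g_s(y)\;\geq\;-\,(g_{t_1}\log g_{t_1})*\varphi_{s-t_1}(y),
\]
a lower bound where you need an upper bound. (This is, of course, the entropy-increase-under-convolution fact; it goes the wrong way for domination.) Moreover, even if the direction were right, the quantity you must dominate is $|\partial_s g_s(y)|\,|\log g_s(y)|\lesssim g_{2s-t_1}(y)\,|\log g_s(y)|$, not $g_s(y)|\log g_s(y)|$, and Jensen applied to $\phi(g_s)$ says nothing about the mixed product with a \emph{different} density $g_{2s-t_1}$.

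The paper's mechanism for the tails is different and is the essential ingredient you are missing. For $a<t<t_0/2$ one has a pointwise \emph{lower} bound $g_t(z)\geq C_{t_0,a}\,g_a(z)$, coming from the elementary comparison of heat kernels with variances $t>a$. This converts $|\log g_t|$ into $C' + |\log g_a|$, and after writing $|\log g_a|\leq |\log g_{t_0}| + |\log(g_a/g_{t_0})|$ and multiplying by the factor $g_{t_0}$ from the bound on $|\partial_t g_t|$, integrability of the dominator reduces to two facts: $g_{t_0}|\log g_{t_0}|\in L^1$ (finite entropy, which you correctly identify), and $g_{t_0}|\log(g_{t_0}/g_a)|\in L^1$, which follows from $D(g_{t_0}\|g_a)<\infty$ via the data processing inequality $D(X+\sqrt{t_0}Z\,\|\,X+\sqrt{a}Z)\leq D(\sqrt{t_0}Z\,\|\,\sqrt{a}Z)$. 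Your proposal never produces a bound of this second type, and without it the Gaussian-tail region where $g_s$ is small is not controlled. You should replace the Jensen step with the lower bound $g_t\geq C\,g_a$ and the relative-entropy/data-processing argument.
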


\begin{proof}
By Lemma \ref{gaussfinite} we have 
$$
h(X+\sqrt{t_0}Z) <\infty.
$$
for every $t_0 >0$.
We will show that $|\partial_t g_t \log g_t|$ is dominated by an integrable function uniformly in a neighborhood of $t$. Fix $0 < a < t < t_0/2$.

As in Barron \cite[Eq. (6.12)]{barron:cltTR}, for $t < t_0/2$,
\be \label{partialbound}
|\partial_t g_t(z)| \leq C_{a,t_0} \, g_{t_0}(z),
\ee
for every $z \in \R$.
In addition, $g_t$ satisfies the bounds 
\begin{align*}
g_t(z) &\leq \frac{1}{\sqrt{2\pi t}} \leq \frac{1}{\sqrt{2 \pi a}},
\\ 
\mbox{and}\quad g_t(z) &= \mathbb{E}\Big[\frac{1}{\sqrt{2\pi t}} e^{-\frac{(z-X)^2}{2t}}\Big] 
\geq C_{t_0,a}\mathbb{E}\Big[\frac{1}{\sqrt{2\pi a}} e^{-\frac{(z-X)^2}{2a}}\Big] =C_{t_0,a}\,g_a(z),
\end{align*}
for every $z\in \R$, where $C_{t_0,a} = \sqrt{2a/t_0}.$
Therefore, 
$$
|\log g_t(z)| \leq \frac{1}{\sqrt{2 \pi a}} + C_{t_0,a} + |\log{g_a(z)}|\leq \frac{1}{\sqrt{2 \pi a}} + C_{t_0,a} + |\log g_{t_0}(z)| + \left| \log \frac{g_a(z)}{g_{t_0}(z)} \right|.
$$
Combining with \eqref{partialbound} and writing $C^{\prime}_{t_0,a} = \frac{1}{\sqrt{2 \pi a}} + C_{t_0,a}$ we get 
\be \label{partialloggbound}
|\partial_t g_t(z) \log g_t(z)| \leq  C^{\prime}_{t_0,a}g_{t_0}(z)+   g_{t_0}(z) |\log g_{t_0}(z)| + g_{t_0}(z)\left| \log \frac{g_{t_0}(z)}{g_{a}(z)} \right|.
\ee

The first term in \eqref{partialloggbound} is clearly integrable. The second term is integrable because of $h(X + \sqrt{t_0}Z) < \infty$.
To see that the third term is integrable, we observe first that by the data processing inequality for relative entropy \cite{cover:book2},
    \[
    D(g_{t_0} \| g_{a}) = D(X+\sqrt{t_0}Z\|X+\sqrt{a}Z) \leq D((X,\sqrt{t_0}Z)\|(X,\sqrt{a}Z))=D(\sqrt{t_0}Z\|\sqrt{a}Z) < \infty,
    \]
where the finiteness of the last relative entropy
follows by simple direct calculation. 
This implies that the integral of $g_{t_0}\log[g_{t_0}/g_a]$ is finite,
and Lebesgue integrability implies that the last term in
\eqref{partialloggbound} is also integrable.
[Alternatively, this can be seen directly via the
inequality \cite[pg. 339, proof of Corollary]{barron:clt}
$$
\int_{\R}{f(x)\Big|\log{\frac{f(x)}{g(x)}}\Big|dx} \leq D(f\|g) + [2D(f\|g)]^{\frac{1}{2}},
$$
for any two probability densities $f$ and $g$.]

Thus, $|\partial_t g_t(z) \log g_t(z)|$ is bounded uniformly in $t \in (a, t_0/2)$ by the right-hand side of \eqref{partialloggbound}, which is integrable.

By the mean value theorem and dominated convergence we may exchange 
differentiation and expectation to obtain
\begin{align*}
\frac{\partial}{\partial t}h(X+\sqrt{t}Z) 
&= -\int{\frac{\partial}{\partial t}\bigl(g_t(y)\log{g_t(y)}\bigr)dy} \\
&= -\int\frac{\partial}{\partial t}g_t(y)dy 
- \int{\Big(\frac{\partial}{\partial t}g_t(y)\Big)\log{g_t(y)}dy} \\
&= -\frac{\partial}{\partial t} \int g_t(y)dy - \int{\Big(\frac{\partial}{\partial t}g_t(y)\Big)\log{g_t(y)}dy} \\
&= - \int{\Big(\frac{\partial}{\partial t}g_t(y)\Big)\log{g_t(y)}dy},
\end{align*}
where $\int\frac{\partial}{\partial t}g_t(y)=\frac{\partial}{\partial t} \int g_t(y)$ as in \cite[Eq. (6.10)]{barron:cltTR}.
\end{proof}

Finally, Lemma \ref{lemmabarronFI} below was established 
as Lemma~6.4 in \cite{barron:cltTR}. The same proof as 
in \cite{barron:cltTR} 
works under the present assumptions, 
except we rely on Lemma \ref{lemmaexchange} instead
\cite[Lemma 6.3]{barron:cltTR}. 

Recall the definition of the Fisher information in \eqref{FIdef}. 
Since $X+\sqrt{t}Z$ has a $C^{\infty}$ density, 
the integral in $I(X+\sqrt{t}Z)$ is well-defined and finite. 

\begin{lemma} \cite[Lemma 6.4]{barron:cltTR} \label{lemmabarronFI}
Let $X$ be a random variable with density $f$, finite differential entropy, and for which there exists an independent random variable $Y$ with finite 
differential entropy itself such that $h(X+Y)$ is finite. Then,
$$
I(X+\sqrt{t}Z) = -\int_{\R}\Big(\frac{\partial^2}{\partial y^2}g_t(y)\Big)
\log{g_t(y)} dy, \quad t>0.
$$
\end{lemma}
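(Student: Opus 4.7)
The plan is to establish the identity
\[
I(X+\sqrt{t}Z) = -\int_{\R}\bigl(\partial_y^2 g_t(y)\bigr)\log g_t(y)\,dy
\]
by integration by parts, following Barron's \cite[Lemma 6.4]{barron:cltTR} essentially verbatim, with the one adjustment that our Lemma \ref{lemmaexchange} replaces Barron's Lemma 6.3 wherever the latter is invoked. In particular, the present assumption on $X$ enters only through Lemma \ref{lemmaexchange}.

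First I would record that both sides of the identity are finite. On the left, the submodularity of Fisher information under independent sums (or Stam's inequality) gives $I(X+\sqrt{t}Z) \leq I(\sqrt{t}Z) = 1/t$. On the right, by the heat equation (Lemma \ref{diffusion}) we have $g_t'' = 2\,\partial_t g_t$, and $|\partial_t g_t \cdot \log g_t|$ was shown in the proof of Lemma \ref{lemmaexchange} to be dominated, uniformly in a neighborhood of $t$, by an integrable function; hence $g_t''\log g_t \in L^1(\R)$.

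Next, since $g_t = f*\varphi_t$ is $C^\infty$ and strictly positive, I would integrate by parts on any bounded interval $[-R,R]$ to obtain
\begin{equation*}
-\int_{-R}^{R} g_t''(y)\,\log g_t(y)\,dy
= -\bigl[g_t'(y)\log g_t(y)\bigr]_{y=-R}^{y=R}
+ \int_{-R}^{R} \frac{\bigl(g_t'(y)\bigr)^2}{g_t(y)}\,dy.
\end{equation*}
As $R\to\infty$, the left-hand integral converges to $-\int_\R g_t''\log g_t$ by dominated convergence, and the last integral converges monotonically to $I(X+\sqrt{t}Z)$. Thus the boundary term $[g_t'\log g_t]_{-R}^R$ has a limit; the remaining task is to show this limit is $0$, at least along a subsequence $R_n\to \infty$.

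The main obstacle is justifying the vanishing of the boundary terms without a moment assumption on $X$. The key bound is the elementary factorization
\begin{equation*}
\bigl|g_t'(y)\log g_t(y)\bigr|
= \sqrt{g_t(y)}\,\bigl|\log g_t(y)\bigr|\,\sqrt{\tfrac{(g_t'(y))^2}{g_t(y)}}.
\end{equation*}
Since $g_t(y) \leq \|\varphi_t\|_\infty = 1/\sqrt{2\pi t}$ uniformly in $y$, and the map $u\mapsto \sqrt{u}\,|\log u|$ is bounded on any interval $(0,K]$, the factor $\sqrt{g_t(y)}\,|\log g_t(y)|$ is uniformly bounded by some constant $C_t$. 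Since Fisher information $\int_{\R} (g_t')^2/g_t\,dy$ is finite, the tail integrals $\int_R^\infty (g_t')^2/g_t$ and $\int_{-\infty}^{-R}(g_t')^2/g_t$ vanish as $R\to\infty$, so a standard selection argument produces a sequence $R_n\to\infty$ with both $(g_t'(R_n))^2/g_t(R_n)\to 0$ and $(g_t'(-R_n))^2/g_t(-R_n)\to 0$. Along $R_n$ the boundary terms are $o(1)$, and passing to the limit yields the claimed identity.
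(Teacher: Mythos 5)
Your proof is correct. The paper itself gives no argument here beyond a pointer to Barron's \cite[Lemma 6.4]{barron:cltTR}, stating that the proof carries over once Lemma \ref{lemmaexchange} is used in place of Barron's Lemma 6.3; you have reconstructed the full integration-by-parts argument that this deferral leaves implicit. You correctly isolate where the present hypotheses on $X$ enter --- namely, to obtain $g_t''\log g_t\in L^1(\R)$ via the heat equation (Lemma \ref{diffusion}) and the domination bound from the proof of Lemma \ref{lemmaexchange} --- and your handling of the boundary terms, through the factorization $|g_t'\log g_t|=\sqrt{g_t}\,|\log g_t|\cdot\sqrt{(g_t')^2/g_t}$, the uniform bound on $\sqrt{g_t}\,|\log g_t|$ coming from $g_t\le(2\pi t)^{-1/2}$, and a selection argument relying only on $I(X+\sqrt{t}Z)<\infty$, is sound and, appropriately, entirely moment-free. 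One small streamlining: rather than insisting on a single sequence $R_n$ along which $(g_t')^2/g_t$ vanishes at both $R_n$ and $-R_n$ (which is true but requires a short measure-theoretic argument to synchronize the two tails), it is simpler to choose separate sequences $a_n,b_n\to\infty$ at which the respective pointwise values tend to zero and to integrate by parts over $[-b_n,a_n]$; since the other two terms converge independently of the choice of interval, the conclusion is the same.
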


We are now ready to give the proof of de Bruin's identity without 
the finite second moment assumption: 
\begin{proof}[Proof of Theorem \ref{ourdebruijn}]
By Lemma \ref{gaussfinite}, $h(X+\sqrt{t}Z)$ is finite. The Fisher information $I(X+\sqrt{t}Z) \leq \frac{1}{t}$ is also finite. 
The result then follows from Lemmas \ref{diffusion}, \ref{lemmaexchange} and \ref{lemmabarronFI}. 
\end{proof}

\section{Proof of weak stability in Shannon's EPI} \label{stabilitysec}
This section is devoted to the proof of Theorem \ref{qualitativestabilityTh}. 
Throughout, for a random variable $X$ and $0\leq t\leq 1$, we write
\be \label{pertdef}
\tilde{X}^t = \sqrt{t}X + \sqrt{1-t}Z,
\ee
where $Z$ is a standard Gaussian, independent of $X$.

Recall that for two independent random variables $X_1,X_2$,
$\depi{\lambda}(X_1,X_2)$ was defined
as the difference, $h(\sqrt{\lambda}X_1 + \sqrt{1-\lambda}X_2) - \lambda h(X_1) - (1-\lambda) h(X_2).$

The following lemma shows that the deficit in the EPI decreases under Gaussian perturbation. A version of this lemma was established by Carlen and Soffer \cite{carlen:91}, but under the additional assumption that the random variables have 
covariance matrices with the same (finite) trace. Their proof makes use of an integral form of de Bruijn's identity. Here we make no moment assumptions and provide a simpler argument, which uses only the EPI. We state it for any dimension $d$, but we will only use it for $d=1$.

\begin{lemma} \label{deficitdecrlemma}
Let \( X_1, X_2 \in \mathbb{R}^d\) be independent random
vectors with finite differential entropies, and assume $-\infty < h(X_1+X_2) < \infty$. 
Then for any $\lambda,t \in [0,1]$,
$$
 \depi{\lambda}(\tilde{X}_1^t,\tilde{X}_2^t) \leq \depi{\lambda}(X_1,X_2).
$$
\end{lemma}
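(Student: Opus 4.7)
The plan is to reduce the claim to monotonicity of a suitably reparametrized deficit. Fix $\lambda\in[0,1]$ and argue in dimension $d=1$ (which is all that the paper subsequently uses). For $t\in(0,1]$, let $s:=(1-t)/t\in[0,\infty)$ and define
\[
\psi(s):=\depi{\lambda}\bigl(X_1+\sqrt{s}\,Z_1,\;X_2+\sqrt{s}\,Z_2\bigr).
\]
Writing $\tilde{X}_i^t=\sqrt{t}\bigl(X_i+\sqrt{s}\,Z_i\bigr)$ and using the scaling identity $h(cW)=h(W)+\log|c|$, the $\tfrac{1}{2}\log t$ contributions from the three entropies comprising $\depi{\lambda}(\tilde{X}_1^t,\tilde{X}_2^t)$ cancel, since their coefficients sum to $1-\lambda-(1-\lambda)=0$, yielding $\depi{\lambda}(\tilde{X}_1^t,\tilde{X}_2^t)=\psi(s)$. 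At $t=0$, $\tilde{X}_i^0=Z_i$ are Gaussian, so $\depi{\lambda}(\tilde{X}_1^0,\tilde{X}_2^0)=0$, while the EPI gives $\depi{\lambda}(X_1,X_2)\ge 0$. So it suffices to show $\psi$ is non-increasing on $[0,\infty)$.

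Before differentiating $\psi$, I would verify finiteness of every entropy appearing in it. Set $S:=\sqrt{\lambda}X_1+\sqrt{1-\lambda}X_2$ and $Z:=\sqrt{\lambda}Z_1+\sqrt{1-\lambda}Z_2\sim\mathcal{N}(0,1)$, noting that $\sqrt{\lambda}(X_1+\sqrt{s}Z_1)+\sqrt{1-\lambda}(X_2+\sqrt{s}Z_2)$ equals $S+\sqrt{s}Z$ in distribution. The EPI gives $h(S)\ge\lambda h(X_1)+(1-\lambda)h(X_2)>-\infty$. For the upper bound, $h(X_1+X_2)<\infty$ together with finiteness of $h(X_1),h(X_2)$ forces $X_1,X_2\notin\mathcal{C}_{\rm BC}$; since $\mathcal{C}_{\rm BC}$ is scale-invariant, $\sqrt{(1-\lambda)/\lambda}\,X_2\notin\mathcal{C}_{\rm BC}$, and Theorem \ref{Thtwopossibilities} applied to $X_1$ gives $h\bigl(X_1+\sqrt{(1-\lambda)/\lambda}\,X_2\bigr)<\infty$, hence $h(S)<\infty$. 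Finiteness of $h(X_i+\sqrt{s}Z_i)$ is immediate from Lemma \ref{gaussfinite}; for $h(S+\sqrt{s}Z)$, I would first apply Lemma \ref{km14Lemma} to $\sqrt{\lambda}X_1,Z',\sqrt{1-\lambda}X_2$ (with $Z'$ a standard Gaussian independent of everything) to get $h(S+Z')<\infty$, and then invoke Lemma \ref{gaussfinite} for $S$.

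With these in hand, Theorem \ref{ourdebruijn} applies to each of $X_1,X_2$ (with partners $X_2,X_1$) and to $S$ (with partner $Z'$), giving
\[
\psi'(s)=\tfrac{1}{2}I(S+\sqrt{s}Z)-\tfrac{\lambda}{2}I(X_1+\sqrt{s}Z_1)-\tfrac{1-\lambda}{2}I(X_2+\sqrt{s}Z_2),\qquad s>0.
\]
Stam's inequality $I(U+V)^{-1}\ge I(U)^{-1}+I(V)^{-1}$ applied to $U=\sqrt{\lambda}(X_1+\sqrt{s}Z_1)$ and $V=\sqrt{1-\lambda}(X_2+\sqrt{s}Z_2)$, combined with $I(cW)=I(W)/c^2$ and the convexity of $x\mapsto 1/x$, yields
\[
I(S+\sqrt{s}Z)\le \lambda\,I(X_1+\sqrt{s}Z_1)+(1-\lambda)\,I(X_2+\sqrt{s}Z_2),
\]
so $\psi'(s)\le 0$. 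Continuity of $\psi$ at $s=0$ (Theorem \ref{entropycontTh}, applied to each of $X_1$, $X_2$, and $S$) then completes the argument: $\psi(s)\le\psi(0)=\depi{\lambda}(X_1,X_2)$ for every $s\ge 0$.

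The main obstacle is the complete absence of moment assumptions, so every step must be routed through the tools of Section \ref{debruijnsec} rather than classical finite-variance arguments. In particular, deducing $h(S)<\infty$ from $h(X_1+X_2)<\infty$ alone relies on the dichotomy in Theorem \ref{Thtwopossibilities}, and applying de Bruijn's identity to the sum variable $S$ requires one extra submodularity manipulation via Lemma \ref{km14Lemma} to produce a finite-entropy Gaussian partner for $S$.
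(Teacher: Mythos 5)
Your proof is correct (for $d=1$), but it takes a genuinely different and noticeably heavier route than the paper's. The paper's argument is a short, self-contained manipulation: it writes $\sqrt{\lambda}\tilde{X}_1^t+\sqrt{1-\lambda}\tilde{X}_2^t = \sqrt{t}\bigl(\sqrt{\lambda}X_1+\sqrt{1-\lambda}X_2\bigr)+\sqrt{1-t}Z$, applies the chain rule for conditional differential entropy, and then lower-bounds $h\bigl(\sqrt{\lambda}X_1+\sqrt{1-\lambda}X_2\mid \text{perturbed sum}\bigr)$ by conditioning on the pair $(\tilde{X}_1^t,\tilde{X}_2^t)$ and invoking the (conditional) EPI; subtracting recovers the deficit on each side. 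This uses only the chain rule, the fact that conditioning reduces entropy, and the EPI itself; it works in any dimension $d$ directly; and it requires no differentiability, no de Bruijn's identity, no Stam's inequality, and no auxiliary finiteness bookkeeping. Your proof instead re-derives the monotonicity by differentiating $\psi(s)=\depi{\lambda}(X_1+\sqrt{s}Z_1,X_2+\sqrt{s}Z_2)$ via Theorem \ref{ourdebruijn} and then applying Stam's convolution inequality. That is precisely the Carlen--Soffer strategy that the paper explicitly says it is \emph{replacing} with a simpler argument (``Their proof makes use of an integral form of de Bruijn's identity. Here we make no moment assumptions and provide a simpler argument, which uses only the EPI''). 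Your route is made to work without moment assumptions by routing every finiteness check through Theorem \ref{Thtwopossibilities}, Lemma \ref{gaussfinite} and Lemma \ref{km14Lemma}; the finiteness verifications you perform (notably deducing $h(\sqrt{\lambda}X_1+\sqrt{1-\lambda}X_2)<\infty$ from the dichotomy, and fabricating a Gaussian partner for $S$ via the submodularity lemma) are correct, but they are exactly the kind of bookkeeping the paper's simpler proof avoids. Two caveats: (i) the lemma is stated for $\mathbb{R}^d$ and your argument is restricted to $d=1$ (you acknowledge this; the differential-calculus tools of Section \ref{debruijnsec} are one-dimensional as written, so the restriction is real, whereas the paper's chain-rule proof extends to any $d$ for free); (ii) you invoke Theorem \ref{entropycontTh} for continuity at $s=0$, but since $\psi(0)=\depi{\lambda}(X_1,X_2)\ge 0$ and $\depi{\lambda}(\tilde{X}_1^0,\tilde{X}_2^0)=0$ at $t=0$, and since for $t\in(0,1]$ right-continuity of $\psi$ at $0$ is needed to pass the bound $\psi(s)\le\psi(0)$ through the limit, this is used correctly but does add one more dependency the paper's proof does not have.
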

\begin{proof}
Note that, for $\lambda \in [0,1]$,
$$
\Yl = \sqrt{t}\bigl(\Xl\bigr) + \sqrt{1-t}Z,
$$
where $Z$ is a standard Gaussian, independent of $X_1$ and $X_2$.
By the chain rule for differential entropy 
\be \label{chaindoubl}
h(\Xl|\Yl) = h(\Xl) + h(\sqrt{1-t}Z) - h(\Yl).
\ee

On the other hand, using that conditioning reduces entropy and the EPI 
in dimension $d$, we have
\begin{align} 
\nonumber
&h(\Xl|\Yl) \\
\nonumber
&\geq h(\Xl|\tilde{X}_1^t,\tilde{X}_2^t) \\
\nonumber
&\geq \lambda h(X_1|\tilde{X}_1^t) + (1-\lambda)h(X_2|\tilde{X}_2^t) \\ 
\nonumber
&= \lambda h(X_1) + \lambda h(\sqrt{1-t}Z_1) + (1-\lambda)h(X_2) +(1-\lambda)h(\sqrt{1-t}Z_2) -\lambda h(\tilde{X}_1^t) - (1-\lambda)h(\tilde{X}_2^t) \\ 
\label{lastexprcondepi}
&= \lambda h(X_1) +  h(\sqrt{1-t}Z) + (1-\lambda)h(X_2) -\lambda h(\tilde{X}_1^t) - (1-\lambda)h(\tilde{X}_2^t),
\end{align}
where $Z_1,Z_2$ are independent standard Gaussians. 
Combining \eqref{chaindoubl} and \eqref{lastexprcondepi} and rearranging gives the result. 
\end{proof}
\begin{remark}
The proof of Lemma \ref{deficitdecrlemma} can easily be generalized to $n$ random variables as in \cite{carlen:91}, using the general form of the EPI.
\end{remark}

    The argument in the proof of Lemma \ref{deficitdecrlemma} is reminiscent of the argument made in \cite{green:25} to show that the (discrete) {\em doubling} does not increase under group homomorphisms. 

Before giving the proof of Theorem \ref{qualitativestabilityTh}, we recall the definition of the L{\'e}vy metric between two distribution functions $F$ and $G$
on $\R$:
\begin{align*}
&d_{\mathrm{L}}(F,G) := \inf{\Big\{\varepsilon>0: 
 F(x-\varepsilon) -\varepsilon \leq G(x) \leq
F(x+\varepsilon)+ \varepsilon, \text{ for all }x\in\R}\Big\}.
\end{align*}
When $F,G$ are the cumulative distribution functions of random variables $X$ and $Y$ respectively,
we write $d_{\mathrm{L}}(X,Y)=d_{\mathrm{L}}(F,G)$.
We will only make use of the fact that $d_{\mathrm{L}}$ metrizes weak convergence in $\R$. 

\begin{proof}[Proof of Theorem \ref{qualitativestabilityTh}]
We argue by contradiction. Suppose that the result is not true, i.e., \eqref{epsdeltaeq} does not hold. Let $\{(X_n,Y_n)\}_{n\geq1}$ be any joint sequence such that $X_n$ and $Y_n$ are independent and $\depi{\lambda}(X_n,Y_n) \to 0$.
Note first that, because of the uniformly-bounded $k$th moment assumption, 
the laws of $\{X_n\}$ and $\{Y_n\}$ form two tight sequences, so each subsequence of either has a convergent subsequence. 

We may thus extract a subsequence, along which both $X_n$ and $Y_n$ converge weakly. 
Assume without loss of generality that the common sequence is $\{(X_j,Y_j)\}_{j\geq1}$ and we have $X_{j}\stackrel{d}{\to}X_{\infty}$ and $Y_j \stackrel{d}{\to}Y_{\infty}$.

By assumption $\depi{\lambda}(X_j,Y_j) \to 0$, but the laws of either $X_j$ or $Y_j$ have bounded below L{\'e}vy distance from all Gaussians infinitely often, that is, there is a $c>0$ such that for infinitely many $j \geq 1$,
\be \label{contradiction}
\max\{d_{\mathrm{L}}(X_j,G_1),d_{\mathrm{L}}(Y_j,G_2)\} > c >0
\ee
for all Gaussians $G_1$ and $G_2$ that have the same variance.

Now consider the perturbed random variables $\tilde{X_j}$ and $\tilde{Y}_j$ as defined in \eqref{pertdef} for $t=\frac{1}{2}$ (any $t>0$ works), where we omit the superscript for simplicity. 
The density of $\tilde{X_j}$ is 
the expectation of a continuous, bounded function (the Gaussian density) of $X_j$ and therefore converges pointwise to the density of 
$\tilde{X}_{\infty}$, and similarly for the density of $\tilde{Y}_j$. 

In addition, the densities of $\tilde{X}_j,\tilde{Y}_j,\tilde{X}_{\infty},\tilde{Y}_{\infty}$ are all bounded above $\frac{1}{\sqrt{\pi}}$.
Moreover, by 
the uniformly-bounded $k$th moment assumption we have
$$\mathbb{E}|\tilde{X}_j|^k,\mathbb{E}|\tilde{Y}_j|^{k} \leq C_k,
$$
for some finite constant $C_k$ depending on the uniform
bound on $\mathbb{E}|X_n|^k$ and $\mathbb{E}|Y_n|^k$
and on $k$.
By considering the truncations 
$$
\mathbb{E}\Bigl(|\tilde{X}_{\infty}|^k; |\tilde{X}_{\infty}| \leq M\Bigr) = \lim_{j \to \infty}\mathbb{E}\Bigl(|\tilde{X_j}|^k; |\tilde{X}_j|\leq M \Bigr) \leq C_k,
$$
and similarly for $\tilde{Y}$, we see, after letting $M\to \infty,$ that $\tilde{X}_{\infty},\tilde{Y}_{\infty}$ satisfy the same $k$-moment condition.

Combining the previous two properties with the fact the densities of $\tilde{X}_{j}$ and $\tilde{Y}_{j}$ converge pointwise and using \cite[Theorem 1]{hero:04}, we conclude that 
\be \label{convergenceofhX}
h(\tilde{X}_j) \to h(\tilde{X}_{\infty}), \quad h(\tilde{Y}_j) \to h(\tilde{Y}_{\infty}).
\ee
A similar argument shows that 
\be \label{convergenceofhl}
h(\sqrt{\lambda}\tilde{X}_j + \sqrt{1-\lambda}\tilde{Y}_j) \to h(\sqrt{\lambda}\tilde{X}_{\infty} + \sqrt{1-\lambda}\tilde{Y}_{\infty}).
\ee

By Lemma \ref{deficitdecrlemma},
$$
0 \leq \depi{\lambda}(\tilde{X}_{j},\tilde{Y}_{j}) \leq \depi{\lambda}(X_{j},Y_j) \to 0.
$$
But by \eqref{convergenceofhX} and \eqref{convergenceofhl} we must have 
$$
\depi{\lambda}(\tilde{X}_{\infty},\tilde{Y}_{\infty}) = 0.
$$

Now, the assumption \eqref{contradiction} implies that $X_{\infty}$ is 
not Gaussian, or $Y_{\infty}$ is not Gaussian,
or they have different variances. By Cram{\'e}r's theorem, this implies 
that $\tilde{X}_{\infty}$ is not Gaussian, or $\tilde{Y}_{\infty}$ is not 
Gaussian, or they have different variances, which contradicts the 
equality case in the EPI. 

This establishes the second, equivalent formulation of the theorem,
and completes the proof.
\end{proof}

\begin{remark}
As can be seen from the proof, even if one is willing to fix the variances of $X_n$, and $Y_n$, these may well be different from that of the limit, as is the case in the counterexample of \cite{courtade:18}, where $X_n$ are Gaussian mixtures with variance $1,$ whereas $X_{\infty}$ is a Gaussian with variance $\frac{1}{2}$. As already mentioned, the latter example satisfies the assumption of Theorem \ref{qualitativestabilityTh} because of the fixed, finite variance.
\end{remark}
\begin{remark}
The proof of Theorem \ref{qualitativestabilityTh} can easily be extended to random vectors on $\R^d$ using the L{\'e}vy-Prokhorov metric, but then one has to invoke the equality case for the EPI in $\R^d$.
\end{remark}

\section{Proof of stability in Tao's EPI} \label{taosection}

\subsection{Log-concavity preliminaries}

A probability mass function (p.m.f.) 
$p : \mathbb{Z} \to \mathbb{R}_+$ is {\em log-concave} if 
$$
p(k)^2 \geq p(k-1)p(k+1), \quad \text{for all } k\in \mathbb{Z}. 
$$
It was shown in \cite{bobkov:22b} that a p.m.f.\ $p$ on the integers is log-concave if and only if there exists a continuous log-concave function $f: \mathbb{R}\to \mathbb{R}_+$ (the continuous, piecewise linear extension of $p$) such that 
\begin{equation*} 
f(k) = p(k),\quad \text{for all } k \text{ in } \mathbb{Z}.
\end{equation*}

We first prove a concentration lemma for continuous, log-concave functions. 
\begin{lemma} \label{lemmamatthieu}

Let  \( f:\mathbb{R} \to \mathbb{R}_{+} \) be a log-concave, integrable function. Denote $\mu = \frac{\int_{\mathbb{R}}{xf(x)dx}}{\int_{\mathbb{R}}f(x)dx}$, $\mu_- = \max\{-\mu,0\}$ and $\mu_+ = \max{\{\mu,0\}}$. Then
for every \( x \geq x_0 \geq \frac{3 \int_{\mathbb{R}} f(x)dx}{f(\mu)} + \mu_+ \),
\be
f(x) \leq f(x_0) 2^{-\frac{x-\mu}{x_0-\mu} + 1}.
\label{concf}
\ee
Analogously, the exact same bound
also holds for every \( x \leq x_0 \leq -\frac{3 \int_{\R} f(x)dx}{f(\mu)} 
- \mu_{-}\).
\end{lemma}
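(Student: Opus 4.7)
The plan is as follows. First I would rewrite the exponent using the identity
$$
-\frac{x-\mu}{x_0-\mu}+1 \;=\; -\frac{x-x_0}{x_0-\mu},
$$
so that the bound \eqref{concf} becomes
$$
f(x) \;\leq\; f(x_0)\,2^{-(x-x_0)/(x_0-\mu)}, \qquad x\geq x_0 > \mu.
$$
Also, I may assume $f(\mu)>0$, as otherwise the hypothesis is vacuous, and I may assume $x>x_0$ (equality is trivial). Note that the hypothesis $x_0 \geq 3\int f / f(\mu) + \mu_+$ together with $\mu_+ \geq \mu$ gives the cleaner inequality
$$
x_0 - \mu \;\geq\; \frac{3\int_\R f(y)\,dy}{f(\mu)}. \qquad (\star)
$$

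The main step is to establish the intermediate claim $f(x_0) \leq \tfrac{1}{2}f(\mu)$. I would prove this by contradiction: suppose $f(x_0) > f(\mu)/2$. Since $\log f$ is concave and finite on $[\mu,x_0]$, linear interpolation yields $f(y) \geq \min\{f(\mu), f(x_0)\} \geq f(\mu)/2$ for every $y \in [\mu,x_0]$. Integrating over this interval,
$$
\int_\R f(y)\,dy \;\geq\; \int_\mu^{x_0} f(y)\,dy \;\geq\; (x_0-\mu)\,\frac{f(\mu)}{2},
$$
which rearranges to $x_0 - \mu \leq 2 \int f / f(\mu)$, contradicting $(\star)$. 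Hence $f(x_0) \leq f(\mu)/2$.

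With this inequality in hand, I would invoke log-concavity once more on $[\mu,x]$. Writing $x_0 = \lambda \mu + (1-\lambda)x$ with $\lambda = (x-x_0)/(x-\mu)$, concavity of $\log f$ gives $\log f(x_0) \geq \lambda \log f(\mu) + (1-\lambda)\log f(x)$. Rearranging and using $\lambda/(1-\lambda) = (x-x_0)/(x_0-\mu)$,
$$
\log f(x) \;\leq\; \log f(x_0) + \frac{x-x_0}{x_0-\mu}\,\log\frac{f(x_0)}{f(\mu)} \;\leq\; \log f(x_0) - \frac{x-x_0}{x_0-\mu}\log 2,
$$
where the last inequality uses $f(x_0) \leq f(\mu)/2$. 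Exponentiating gives precisely the rewritten form of \eqref{concf}. The analogous left-tail bound follows by applying everything just proved to $\tilde f(y) := f(-y)$, which is log-concave with mean $-\mu$ and $\tilde\mu_+ = \mu_-$.

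I do not expect any real obstacle: the entire argument rests on two applications of log-concavity (once to bootstrap $f$ on $[\mu,x_0]$ away from zero, once to interpolate between $\mu$ and $x$) together with the elementary integral bound. The only point that requires a little care is the algebraic manipulation $\mu_+ \geq \mu$ used to deduce $(\star)$, which allows a uniform treatment of the signs of $\mu$; this is also what makes the bound genuinely useful when the mean is negative but $x_0$ still lies on the right of $\mu$.
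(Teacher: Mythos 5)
Your proof is correct, and the overall strategy matches the paper's: establish the intermediate inequality $f(x_0)\leq f(\mu)/2$, then apply log-concavity along $[\mu,x]$ to obtain geometric decay. The difference lies in how the intermediate inequality is obtained. The paper's argument (adapted from Fradelizi et al.) first bounds the location of the mode $x_{\max}$ via $\int f \geq f(\mu)(x_{\max}-\mu)$, then integrates over $[x_{\max},x_0]$ using unimodality to deduce $\int f \geq f(x_0)\cdot 2\int f/f(\mu)$. Your argument dispenses with the mode entirely: you argue by contradiction, noting that if $f(x_0)>f(\mu)/2$ then log-concavity forces $f\geq\min\{f(\mu),f(x_0)\}>f(\mu)/2$ on all of $[\mu,x_0]$, which is incompatible with $x_0-\mu\geq 3\int f/f(\mu)$. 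This is a cleaner route to the same half-bound; it avoids the implicit (and slightly delicate) reduction to $x_{\max}\geq\mu$ used in the paper, and it makes the symmetry under $y\mapsto -y$ for the left-tail case transparent. Your algebraic rewriting of the exponent as $-(x-x_0)/(x_0-\mu)$ and the final interpolation step are in full agreement with the paper's computation.
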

\begin{proof}
Write $x_{\mathrm{max}}$ for the mode of $f$, or one of its modes if
it is not unique. Assume without loss of generality that $x_{\mathrm{max}} \geq \mu$ (otherwise, consider $g(x) = f(-x)$). 

We will only prove the positive case, since the negative case is similar. Proceeding similarly to the proof of \cite[Lemma 22]{fradelizi:arxiv24} and noting that 
\begin{equation} \label{modecontrol}
\int_{\R}{f(x)dx} \geq \int_{\mu}^{\infty} f(x) dx \geq \int_{\mu}^{x_{\mathrm{max}}}f(\mu) = f(\mu)(x_{\max}-\mu),
\end{equation}
we have
\begin{equation} \label{intf03}
2 \int_{\R}^{} \frac{f(x)}{f(\mu)}  \, dx + x_{\max} \leq 3 \frac{\int_{\R} f(x)dx}{f(\mu)} + \mu \leq  3 \frac{\int_{\R} f(x)dx}{f(\mu)} + \mu_+.
\end{equation}
Thus, using the assumption on $x_0$ and \eqref{intf03}, 
\[
\int_{\R}^{} f(x)  dx \geq \int_{x_{\max}}^{x_0} f(x) \, dx \geq f(x_0) (x_0 - x_{\max}) \geq f(x_0) \left( 2\int_{\R}^{} \frac{f(x)}{f(\mu)} \, dx + x_{\max} - x_{\max} \right),
\]
or,
\begin{equation} \label{half}
f(x_0) \leq \frac{f(\mu)}{2}.
\end{equation}

Now, using log-concavity and writing \( x_0 = \frac{x_0-\mu}{x-\mu} x + \left( 1 - \frac{x_0-\mu}{x-\mu} \right) \mu \), where by assumption $0\leq \frac{x_0-\mu}{x-\mu} \leq 1$, we have
\[
\log{f(x_0)} \geq \frac{x_0-\mu}{x-\mu}\log{f(x)} + (1-\frac{x_0-\mu}{x-\mu})\log{f(\mu)},
\]
or,
\[
f(x)^{\frac{x_0-\mu}{x-\mu}} \leq \frac{f(x_0)}{f(\mu)^{-\frac{x_0-\mu}{x-\mu}+1}},
\]
hence,
\[
f(x) \leq f(x_0)\Bigl(\frac{f(x_0)}{f(\mu)}\Bigr)^{\frac{x-\mu}{x_0-\mu}-1}.
\]
This combined with \eqref{half} gives \eqref{concf}.
\end{proof}

The following lemma shows, via finite bounds,
that a discrete log-concave p.m.f.\
eventually decays exponentially.
\begin{lemma} \label{exponentialp}
 Let $p:\mathbb{Z} \to \mathbb{R}_+$ be a discrete log-concave p.m.f., with mean $\mu_p := \sum_{k \in \mathbb{Z}}p(k)k$ and variance $\sigma^2.$ Assume $\sigma \geq 2.$ Then for all positive integers $k,m$ such that $k \geq m \geq 49\sigma +2\mu_p + 8$, we have,
$$
p(k) \leq  p(m)2^{-\frac{k-\mu}{m-\mu}+1},
$$
for some $\mu \in \mathbb{R}$ with 
\be \label{muassumptionbounds}
|\mu| \leq 8 + 2|\mu_p|.
\ee
Analogously, 
the exact same bound holds 
for all negative integers $k \leq m \leq -(49\sigma +2\mu_p + 8)$,
for some $\mu$ satisfying \eqref{muassumptionbounds}.
\end{lemma}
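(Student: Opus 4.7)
The approach is to lift the problem from the discrete lattice to the continuous line. By the characterization in \cite{bobkov:22b}, the continuous piecewise-linear interpolation $f:\R \to \R_+$ of $p$, defined by $f(k) = p(k)$ for $k \in \Z$ and linearly in between, is log-concave on $\R$. A direct computation (equivalently, $f$ is the density of $V+T$ with $V \sim p$ independent of a random variable $T$ with triangular density on $[-1,1]$) shows that $\int_{\R} f = 1$, the mean of $f$ equals $\mu_p$, and the variance of $f$ is $\sigma^2 + \tfrac{1}{6}$.

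The first step is to establish a quantitative lower bound $f(\mu_p) \geq c/\sigma$ for an explicit absolute constant $c > 0$. This combines two standard one-dimensional log-concave facts: the mode bound $f_{\max}\sigma_f \geq 1/\sqrt{12}$ (attained by the uniform density), and the ratio bound $f(\mu_f)/f_{\max} \geq 1/e$ (attained by the exponential). Using $\sigma_f^2 = \sigma^2 + \tfrac{1}{6} \leq \tfrac{5}{4}\sigma^2$, which holds since $\sigma \geq 2$, this yields the required estimate.

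Next I would apply Lemma \ref{lemmamatthieu} with the choice $\mu = \mu_p$, which trivially satisfies the constraint $|\mu| \leq 8 + 2|\mu_p|$, and $x_0 = m$. The hypothesis $x_0 \geq 3/f(\mu) + \mu_+$ is then majorized by $3\sigma/c + (\mu_p)_+$; the absolute constants $49$ and $8$ in the stated threshold are chosen with enough slack so that $m \geq 49\sigma + 2\mu_p + 8$ implies this hypothesis. Because $f$ coincides with $p$ at every integer, restricting the resulting inequality $f(x) \leq f(x_0)\,2^{-(x-\mu)/(x_0-\mu)+1}$ to $x = k \in \Z$ and $x_0 = m \in \Z$ delivers the claimed exponential decay for $p$. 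The negative case follows symmetrically, by applying the same argument to $x \mapsto f(-x)$.

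\textbf{Main obstacle.} The crux of the argument is the quantitative lower bound $f(\mu_p) \gtrsim 1/\sigma$; once it is in hand the rest is routine bookkeeping. A subtler point lies in the regime where $\mu_p$ is very negative: the stated threshold $49\sigma + 2\mu_p + 8$ can become much smaller than $3\sigma/c$, so the hypothesis of Lemma \ref{lemmamatthieu} cannot be satisfied with $\mu = \mu_p$. This is precisely where the built-in flexibility of choosing any $\mu$ with $|\mu| \leq 8 + 2|\mu_p|$ is exploited, e.g.\ by taking $\mu$ near the mode of $p$ rather than its mean; verifying that such a recentering is compatible with the explicit constants in the statement is the only real technical point of the proof.
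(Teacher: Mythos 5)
The proposal rests on a false premise: the continuous, \emph{linearly} interpolated extension of a log-concave p.m.f.\ is not, in general, log-concave. Take $p$ proportional to $(1,2,4)$ on $\{0,1,2\}$; this is log-concave since $p(1)^2 = p(0)p(2)$, but the piecewise-linear interpolant $f$ satisfies
\begin{equation*}
f(1) = \tfrac{2}{7} < \tfrac{\sqrt{4.18}}{7} = \sqrt{f(0.9)\,f(1.1)},
\end{equation*}
violating log-concavity at the kink $x=1$ (equivalently, $(\log f)'$ jumps \emph{up} from $\tfrac12$ to $1$ there). The condition for the linear interpolant to be log-concave is $2p(k) \geq p(k-1)+p(k+1)$, which is concavity of $p$, a strictly stronger property than log-concavity. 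Consequently you cannot apply Lemma~\ref{lemmamatthieu} to this $f$, and the two clean facts you rely on ($f_{\max}\sigma_f \geq 1/\sqrt{12}$ and $f(\mu_f)/f_{\max}\geq 1/e$) are unavailable. The log-concave extension actually invoked from \cite[Proposition~5.1]{bobkov:22b} is a different object (its logarithm, not the function itself, is piecewise affine — the paper's parenthetical is admittedly misleading here), and this extension does \emph{not} integrate to $1$, does \emph{not} have mean $\mu_p$, and does \emph{not} have variance $\sigma^2 + \tfrac16$. Controlling the resulting discrepancies — $\bigl|\int f - 1\bigr| \leq \max_k p(k)$ and $\bigl|\mu - \mu_p/\!\int f\bigr| \leq (e+1)\sum_k f(k)/\!\int f$, via \cite{fradelizi:arxiv24} — is precisely the content of the paper's proof that your plan skips.

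A secondary confusion: the quantity $\mu$ in Lemma~\ref{lemmamatthieu} is not a free parameter; it is \emph{defined} to be the centroid $\int xf/\!\int f$ of the continuous extension, and the hypothesis on $x_0$ is stated in terms of that specific $\mu$. Your ``main obstacle'' paragraph proposes to exploit a ``built-in flexibility'' by choosing $\mu$ near the mode, but there is no such flexibility: the lemma's conclusion is tied to the actual centroid, and the bound $|\mu| \leq 8 + 2|\mu_p|$ in the statement of Lemma~\ref{exponentialp} is a \emph{derived estimate} on where that centroid sits (obtained via the Fradelizi bound above), not a license to pick $\mu$ at will. Once you work with the correct log-concave extension, the regime you worry about ($\mu_p$ very negative) is handled by the same estimates without any recentering device.
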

\begin{proof}
By \cite[Proposition 5.1]{bobkov:22b} there exists a continuous log-concave function $f$ such that $f(k) = p(k)$ for all integers $k$.

We prove the right-tail bound. 
By inequality (50) in the proof of \cite[Prop. 24]{fradelizi:arxiv24}, which holds for any integrable, log-concave function $f$ (not necessarily 
a density), we have 
$$
\Big|\int_{\mathbb{R}}{f(x)dx} - \sum_k {p(k)}\Big| \leq \max_{\mathbb{Z}}{p(k)} \leq \frac{1}{\sigma} \leq \frac{1}{2},
$$
where we used \cite[Theorem 1.1]{bobkov:22b} and the assumption $\sigma \geq 2$.

Write $\mu = \frac{\int_{\R}{xf(x)dx}}{\int_{\R}{f(x)dx}}$. By \cite[Theorem 4]{fradelizi:97}
\begin{equation*} 
f(\mu)\geq \frac{\max_{\mathbb{R}}{f(x)}}{e} \geq \frac{\max_{\mathbb{Z}}{f(k)}}{e}.
\end{equation*}
In addition, by the one-dimensional part of \cite[Proposition 27]{fradelizi:arxiv24},
\begin{equation*}
    \left|\mu-\frac{\mu_p}{\int_{\R}f(x)dx}\right|\leq (e+1)\frac{\sum_{k\in \mathbb{Z}} f(k)}{\int_{\R}{f(x)dx}} \leq \frac{4}{1-\frac{1}{\sigma}} \leq 8.
\end{equation*} 
The latter implies 
\be \label{meanbounds}
-\frac{2}{3}|\mu_p|-8 \leq -\frac{|\mu_p|}{\int_{\R}f(x)dx} -8\leq \mu \leq 8+\frac{|\mu_p|}{\int_{\R}f(x)dx} \leq 8+2|\mu_p|.
\ee

Combining the above bounds we have 
\be
\frac{3\int_{\R}{f(x)dx}}{f(\mu)} + \mu_+ \leq \frac{9}{2f(\mu)} + |\mu| \leq \frac{9e}{2\max_{k}p(k)} + 8 +2|\mu_p| \leq 49\sigma + 8 + 2\mu_p,
\label{combined}
\ee
where we also used the fact that $\max_kp(k) \geq \frac{1}{4 \sigma}$ for $\sigma \geq 1$ by \cite[Theorem 1.1]{bobkov:22b}.
Therefore, the assumption on $m$ implies that the first assumption of Lemma \ref{lemmamatthieu} is satisfied for $x_0 = m$ and $x = k$. Applying that lemma we obtain,
$$
p(k) \leq  p(m)2^{-\frac{k-\mu}{m-\mu}+1},
$$
where $\mu$ satisfies the bounds \eqref{meanbounds} and therefore also the claimed bound \eqref{muassumptionbounds}. 

The left tail can be bounded in an analogous manner. 
\end{proof}

\subsection{Cheeger constant, Poincar{\'e} constant and stability} \label{cheegersec}

Here we show that the Poincar{\'e} constant of a discrete log-concave random variable is bounded by a constant times its variance, and use this to prove Theorem \ref{mainthdiscrstable}. We recall the definition of the Poincar{\'e} constant of a random variable $X$, denoted by $C_P(X)$: 
$$
C_P(X) = \sup{\frac{\Var{(g(X))}}{\mathbb{E}\bigl[\bigl(g^{\prime}(X)\bigr)^2\bigr]}},
$$
where the supremum is over all continuously differentiable functions $g$. 
We also write $C_P(\nu) = C_P(X)$ when $X$ has law $\nu.$ 

The isoperimetric constant of a separable probability measure $\nu$ on a metric space $(\mathcal{X},d)$ is defined as 
$$
\Is(\nu) := \inf \frac{\nu^+(A)}{\min{\{\nu(A),1-\nu(A)\}}},
$$
where the infimum ranges over all Borel sets $A\subset \mathcal{X}$ with 
$0<\nu(A)<1$, $\nu^+(A) := \liminf_{h\to0^+}\frac{\nu(A^h)-\nu(A)}{h}$ and 
$A^h$ denotes the open neighborhood of $A$ with radius $h$ with respect to $d$.
It was introduced by Cheeger \cite{cheeger:71} in the 
context of Riemannian geometry.

We are interested in the special case where $(\mathcal{X},d) = (\R,|\cdot|).$ If $F(x)$ is the distribution function of a probability measure $\nu$ with density $f_\nu$ with respect to the Lebesgue measure, then we have the following characterization of the isoperimetric constant due to Bobkov and Houdr{\'e} \cite[Theorem 1.3]{bobkov:97}:
\be \label{Isexpr}
\Is(\nu) = \essinf_{a<x<b}\;\frac{f_\nu(x)}{\min{\{F(x),1-F(x)}\}},
\ee
where $a = \inf{\{x:F(x) > 0\}}$, $b = \sup{\{x:F(x)<1\}}$,
and the essential infimum is with respect to Lebesgue measure
on $(a,b)$.

Cheeger's inequality \cite{cheeger:71} (see also \cite[Eq. (3.8)]{bobkov:97}) states that,
\be \label{cheegerineq}
C_P(\nu) \leq \frac{4}{\Is^2(\nu)}.
\ee
Therefore, any lower bound on the isoperimetric constant gives
an upper bound on the Poincar{\'e} constant. 

\begin{proof}[Proof of Proposition \ref{poincareprop}]
Let $\f$ be the (piecewise constant) density of $X+U$,
$\F$ be its distribution function, and $\nu$ its law.
In view of \eqref{Isexpr} and \eqref{cheegerineq}, our goal is to show that, 
\be \label{goalpoincare}
\min{\{\F(x),1-\F(x)\}} \leq C\sigma \f(x) \quad \text{for Lebesgue-almost every } x\in \R,
\ee
for an appropriate absolute constant $C$,
where $\sigma^2$ is the variance of $X$.

Write $p$ for the p.m.f.\ of $X$ and $\mu_p$ for its mean. By shifting $X$ by an integer if necessary, we may assume that $\mu_p \in [0,1]$. As before, by \cite[Proposition 5.1]{bobkov:22b}, there exists a continuous function $f$ which is log-concave and $f(k) = p(k)$ for all $k\in\Z$.
Using \eqref{modecontrol} for the mode $x_m$ of $f$,
noting that an analogous lower bound on the mode may be proved 
in the same way in the case $x_m \leq \mu$, 
and applying \eqref{combined}, we obtain that

\begin{equation*}
|x_m| \leq \frac{\int_{\R}{f(x)dx}}{f(\mu)} + |\mu| \leq \frac{3}{2f(\mu)} + |\mu| \leq \frac{3e}{2\max_{k}p(k)} + 8 +2|\mu_p|.
\end{equation*}
To see that the first inequality above follows 
from \eqref{modecontrol}, note that, either,
$$
\mu \leq x_m \leq \frac{\int_{\R}{f(x)dx}}{f(\mu)} + \mu_+,
$$
or,
$$
\mu \geq x_m \geq -\frac{\int_{\R}{f(x)dx}}{f(\mu)} - \mu_-.
$$
Hence, using again the fact that $\max_kp(k) \geq \frac{1}{4\sigma}$ 
for $\sigma \geq 1$, we see that 
\begin{equation*}
|x_m| \leq 18\sigma+10,
\end{equation*}
where we also used the fact that $\mu_p \leq 1$. 
By unimodality, the mode $\xmp$ of $p$, satisfies 
$
x_m -1 \leq \xmp \leq x_m+1,
$
and, therefore, 
\be \label{modepbound}
|\xmp| \leq 18\sigma+11.
\ee

We will show that, for $x \geq \xmp$ we have $1-\F(x) \leq C\sigma \f(x)$,
while for $x < \xmp$ we have $\F(x) \leq C\sigma\f(x)$.

Consider the first case, $x \geq \xmp$, and let $m = \lfloor x\rfloor$. 
With this notation, $\f(x) = p(m)$. For 
$$m \geq 49\sigma+10, 
$$
we will use Lemma \ref{exponentialp}, which is applicable since we have assumed $\mu_p \in [0,1]$. We have
$$1-\F(x) 
= \int_{x}^{\infty}\f(y)dy 
\leq \sum_{k=m}^{\infty}p(k)
= p(m)\sum_{k=m}^{\infty}\frac{p(k)}{p(m)}.
$$
Recalling that 
discrete log-concavity
says that $\frac{p(k+1)}{p(k)}$ is non-increasing in $k$,
we obtain
\begin{align}
1-\F(x) 
\nonumber
&\leq p(m)\sum_{k=m}^{\infty}{\Bigl(\frac{p(m+1)}{p(m)}}\Bigr)^{k-m} \\ \label{pmceilingsbound}
&\leq p(m)\sum_{k=m}^{\infty}{\Bigl(\frac{p(\lceil 49\sigma+10\rceil+1)}{p(\lceil 49\sigma+10\rceil)}}\Bigr)^{k-m}.
\end{align}
Now, by Lemma \ref{exponentialp},  we have
\be
\frac{p(\lceil 49\sigma+10\rceil+1)}{p(\lceil 49\sigma+10\rceil)}\leq 2^{-\frac{1}{\lceil 49\sigma+10\rceil-\mu}} \leq 2^{-\frac{1}{49\sigma+21}},
\label{lgtails}
\ee
where $\mu$ is as in the lemma, and where we used the 
bound \eqref{muassumptionbounds} and the fact that $|\mu_p|\leq 1$.
Therefore, substituting \eqref{lgtails} into \eqref{pmceilingsbound},
a simple change of variables in the sum gives, 
\begin{align}
\nonumber
1-\F(x) &\leq \f(x)\sum_{k=0}^{\infty} 2^{-\frac{k}{\SI}} \\ \label{exponentialelementarybound}
&= \f(x)\frac{1}{1-2^{-\frac{1}{\SI}}}.
\end{align}
Using the elementary bound $1-x\geq e^{-2x} \geq 2^{-4x}$, 
for $x< \frac{\log{2}}{2}$, we can 
bound \eqref{exponentialelementarybound} further to obtain 
\begin{align*}
1-\F(x) &\leq \f(x) \frac{1}{1-e^{-\frac{1}{98\sigma + 42}}} \\
&\leq \f(x)\cdot (196 \sigma + 84) \\
&\leq 238  \sigma \cdot \f(x).
\end{align*}
On the other hand, if 
$$m < 49\sigma +10,$$
while $x\geq \xmp$, using \eqref{modepbound} and unimodality
we have,
\begin{align}
\nonumber
1-\F(x) &\leq \sum_{k = m}^{\lfloor 49\sigma+10 \rfloor}p(k) + \sum_{k=\lfloor 49\sigma+10 \rfloor}^{\infty}p(k) \\
\nonumber
&\leq \sum_{-18\sigma-11\leq k \leq 49\sigma + 10}p(m)+\sum_{k=\lfloor 49\sigma+10 \rfloor}^{\infty}p(k) \\ \label{secondtermrepeat}
&\leq 78 \sigma \cdot p(m) + 253\sigma \cdot p(\lfloor 49 \sigma +10\rfloor) \\
\nonumber
&\leq 331\sigma \cdot p(m) \\ \label{finalleftcdf}
&= 331\sigma \cdot \f(x),
\end{align}
where we bounded the second term in \eqref{secondtermrepeat} in the same way as in \eqref{pmceilingsbound}.

The second case, where $x < \xmp$, is treated 
exactly the same way: using the left-tail bounds of 
Lemma \ref{exponentialp}, we can show that
\be \label{finalrightcdf}
\F(x) \leq 331\sigma \cdot \f(x). 
\ee

The two bounds \eqref{finalleftcdf} and \eqref{finalrightcdf} 
give \eqref{goalpoincare}, as desired, with $C = 331$.
By Cheeger's inequality \eqref{cheegerineq} we conclude that:
$$
C_P(X+U) \leq 4\cdot 331^2 \sigma^2  = 438244 \cdot \sigma^2.
$$
\end{proof}

We are now ready to give the proof of Theorem \ref{mainthdiscrstable}: 
\begin{proof}[Proof of Theorem \ref{mainthdiscrstable}]
Let $Z$ be a Gaussian with the same mean and variance as $X+U,$ 
where $U$ is a continuous uniform on the unit interval,
independent of $X$.
Then by \cite[Theorem 4.1, (ii)]{KM:14}, i.e., equation \eqref{kontoyianniseq}, applied to $X+U,$ 
\begin{align*}
D(X+U \| Z) &\leq \Bigl( \frac{2 C_P(X+U)}{\sigma^2 + \frac{1}{12}} + 1\Bigr)\Bigl( h(X_1+ U_1 + X_2 + U_2) - h(X_1 + U_1) - \frac{1}{2}\log{2} \Bigr)\\
&\leq 876489\cdot\Bigl( h(X_1+ U_1 + X_2 + U_2) - h(X_1 + U_1) - \frac{1}{2}\log{2} \Bigr),
\end{align*}
where we used Proposition \ref{poincareprop}.

Now we invoke \cite[Theorem 1]{gavalakis:24}. 
The assumption $\sigma > 1547 > \frac{3^7}{\sqrt{2}}$ ensures that 
we may apply the finite bound in \cite[Eq. (11)]{gavalakis:24},
where the error term
in this case is at most 
$
C\frac{\log{\sigma}}{\sigma}
$
for some absolute constant $C$.
Thus, we have,
\be \label{contstabilitypoinc}
D(X+U \| Z) \leq C_1\Bigl( H(X_1+ X_2) - H(X_1) - \frac{1}{2}\log{2} + C_2\frac{\log{\sigma}}{\sigma} \Bigr),
\ee
for some absolute constants $C_1,C_2$.

By \cite[Theorem 2.1]{gavalakis:clt}, with $n=1$ and maximal span equal to 1,
we finally obtain,
\begin{align*}
D(X\|Z^{(\mathbb{Z})}) &\leq \frac{1}{2}\log{(2\pi e \sigma^2)} - H(X) + \frac{2}{\sigma}\\
&\leq \frac{1}{2}\log{2\pi e (\sigma^2+\frac{1}{12})} - h(X+U) + \frac{2}{\sigma} \\
&=D(X+U\|Z) + \frac{2}{\sigma}\\
&\leq C_1\Bigl( H(X_1+ X_2) - H(X_1) - \frac{1}{2}\log{2}\Bigr) + C^{\prime}_2\frac{\log{\sigma}}{\sigma},
\end{align*}
for some absolute constant $C_2^{\prime}$,
where the last inequality follows by \eqref{contstabilitypoinc}. 
\end{proof}

\appendix

\section*{Appendix: Proof of the EPI via de Bruin's identity} \label{appendix}

Here we present Stam's proof of the EPI \cite{stam:59},
in the form given in \cite{dembo-cover-thomas:91}, 
but justifying de Bruijn's identity and continuity of the entropy function 
using our results, which avoid the finite variance assumption.

Recall that Stam's inequality 
\cite{stam:59, blachman:65, dembo-cover-thomas:91},
\be \label{blachmanstam}
I(\sqrt{\lambda} X + \sqrt{1-\lambda} Y) \leq \lambda I(X) + (1-\lambda) I(Y),
\ee
holds for any pair of independent random variables $X,Y$ and all 
$\lambda\in(0,1)$, with the Fisher information defined as in~\eqref{FIdef}.
Moreover, if $I(X)$ and $I(Y)$ are finite, there is 
equality in \eqref{blachmanstam} if and only if $X,Y$ are Gaussian. 
In the proof below we will only use \eqref{blachmanstam} on perturbed 
random variables, which possess smooth densities, so existence of 
Fisher information will not be an issue. 

\begin{proof}[Proof of Theorem \ref{ourEPI}]
If $h(X) = -\infty,$ $h(Y) = -\infty$ or $h(\sqrt{\lambda}X+\sqrt{1-\lambda}Y) = \infty$, there is nothing to prove. So assume $h(X),h(Y) > -\infty$ and $h(\sqrt{\lambda}X+\sqrt{1-\lambda}Y)< \infty$. In particular, $X,Y$ must have densities with respect to the Lebesgue measure. 

Let \(Z_{1},Z_{2}\sim\mathcal{N}(0,1)\) be independent of \(X,Y\).
For \(t\in[0,1]\) consider the perturbations, as defined in \eqref{pertdef}, 
\begin{align*}
  \Xt &= \sqrt{t}X + \sqrt{1-t}\,Z_{1}, \\
   \Yt &= \sqrt{t}Y + \sqrt{1-t}\,Z_{2}.
\end{align*}
Now fix $\lambda \in (0,1)$ and consider $V_t = \sqrt{\lambda}\Xt + \sqrt{1-\lambda}\Yt$. Note that 

\be \label{Vtexpr}
V_t = \sqrt{t}V_1 + \sqrt{1-t}V_0,
\ee
where $V_0 \sim \mathcal{N}(0,1)$.

Define
\[
\Delta(t) := h(V_t) - \lambda h(X_t) - (1 - \lambda) h(Y_t) \quad t \in [0,1].
\]
Then,
\[
\Delta(1) = h(V_1) - \lambda h(\tilde{X}^1) - (1 - \lambda) h(\tilde{Y}^1) = h(\sqrt{\lambda} X + \sqrt{1 - \lambda} Y) - \lambda h(X) - (1 - \lambda) h(Y).
\]
We aim to prove that \( \Delta(1) \ge 0 \).
From de Bruijn's identity, Theorem \ref{ourdebruijn}, we have that $\Delta(t)$ is differentiable for all $t<1$ and therefore also continuous. Thus, 
\[
\Delta(0) = \lim_{t \downarrow 0} \Delta(t) = 0.
\]
Since, by Theorem \ref{entropycontTh}, 
$\Delta(1) = \lim_{t \uparrow 1} \Delta(t),$ it suffices to prove 
that \( \Delta^\prime(t) \ge 0 \) for all \( t \in (0,1) \). Note that,
since we have assumed $h(\sqrt{\lambda}X+\sqrt{1-\lambda}Y) < \infty$,
the 
assumptions of 
Theorem \ref{entropycontTh}
are satisfied by $\sqrt{\lambda}X$ and $\sqrt{1-\lambda}Y$.
Also, by an application of the submodularity inequality Lemma \ref{km14Lemma},
$\sqrt{\lambda}X + \sqrt{1-\lambda}Y$ satisfies the assumption of 
Theorem \ref{entropycontTh} as well. 

Let
\[
s(t) = \frac{1 - t}{t}, \qquad s^\prime(t) = -\frac{1}{t^2},
\]
and note that,
\[
\Xt = \sqrt{t} X + \sqrt{1 - t} Z_1 = \sqrt{t} \left( X + \sqrt{s(t)} Z_1 \right).
\]
Since,
\[
h(\Xt) = \frac{1}{2} \log t + h\left(X + \sqrt{s(t)} Z_1\right),
\]
we have, by de Bruijn's identity and the scaling property $I(aX) = \frac{1}{a^2}I(X),$
\[
\frac{d}{dt} h(\Xt) = \frac{1}{2t} + \frac{1}{2} s^\prime(t) I\left(X + \sqrt{s(t)} Z_1\right) = \frac{1}{2t} - \frac{1}{2t} I(X_t).
\]
Similarly, for \( \Yt \) and \( V_t \), recalling the expression \eqref{Vtexpr}, we get
\[
\frac{d}{dt} h(\Yt) = \frac{1}{2t} - \frac{1}{2t} I(\Yt), \qquad \frac{d}{dt} h(V_t) = \frac{1}{2t} - \frac{1}{2t} I(V_t).
\]

Thus, the derivative of \( \Delta(t) \) becomes
\[
\Delta^\prime(t) = \frac{1}{2t} \left[ \lambda I(\Xt) + (1 - \lambda) I(\Yt) - I(V_t) \right],
\]
and from Stam's
inequality \eqref{blachmanstam} we have 
$ \Delta^\prime(t) \geq 0$, for all $t \in (0,1)$,
completing the proof of the EPI.

Now, suppose
there is equality in the EPI. Since, by assumption $h(X),h(Y) < \infty,$ 
we must have that $h(\sqrt{\lambda}X+\sqrt{1-\lambda}Y) < \infty$. Moreover, we must have $\Delta^{\prime}(t) = 0$ for every $t \in [0,1]$, which by the equality case in the Fisher information inequality implies that $\Xt$ and $\Yt$ are Gaussian. Cram{\'e}r's characterization of the Gaussian distribution shows that $X$ and $Y$ must be Gaussian as well. Since by assumption $h(X),h(Y)$ are finite and $X,Y$ are Gaussian, their variances must be finite as well. Moreover, if $X$ and $Y$ are Gaussian and we have equality in \eqref{EPI}, we have 
$$
\frac{1}{2}\log{\Bigl({{\lambda}\mathrm{Var}(X) + {(1-\lambda)}\mathrm{Var}(Y)}\Bigr)} = \frac{\lambda}{2}\log{\mathrm{Var}(X)} + \frac{(1-\lambda)}{2}\log{\mathrm{Var}(Y)},
$$ 
which implies that $\mathrm{Var}(X) = \mathrm{Var}(Y)$.
\end{proof}

\bibliographystyle{plain}
\bibliography{ik}

\def\cprime{$'$}
\begin{thebibliography}{10}

\bibitem{anantharam:22}
V.~Anantharam, V.~Jog, and C.~Nair.
\newblock Unifying the {Brascamp-Lieb} inequality and the entropy power
  inequality.
\newblock {\em IEEE Trans. Inform. Theory}, 68(12):7665--7684, December 2022.

\bibitem{bakry-emery:85}
D.~Bakry and M.~{\'E}mery.
\newblock Diffusions hypercontractives.
\newblock In {\em S{\'e}minaire de Probabilit{\'e}s XIX 1983/84}, pages
  177--206. Springer, Berlin, 1985.

\bibitem{bakry:book}
D.~Bakry, I.~Gentil, and M.~Ledoux.
\newblock {\em Analysis and geometry of Markov diffusion operators}, volume 348
  of {\em Grundlehren der mathematischen Wissenschaften}.
\newblock Springer, Cham, Germany, 2014.

\bibitem{ball:03}
K.~Ball, F.~Barthe, and A.~Naor.
\newblock {Entropy jumps in the presence of a spectral gap}.
\newblock {\em Duke Math. J.}, 119(1):41--63, July 2003.

\bibitem{ball:12}
K.~Ball and V.H. Nguyen.
\newblock Entropy jumps for isotropic log-concave random vectors and spectral
  gap.
\newblock {\em Studia Mathematica}, 213(1):81--96, 2012.

\bibitem{barron:cltTR}
A.R. Barron.
\newblock Monotonic central limit theorem for densities.
\newblock NSF Technical Report no.\ 50, Department of Statistics, Stanford
  University, March 1984.
\newblock Available at
  \texttt{statistics.stanford.edu/resources/technical-reports}.

\bibitem{barron:clt}
A.R. Barron.
\newblock Entropy and the central limit theorem.
\newblock {\em Ann. Probab.}, 14(1):336--342, January 1986.

\bibitem{blachman:65}
N.M. Blachman.
\newblock The convolution inequality for entropy powers.
\newblock {\em IEEE Trans. Inform. Theory}, 11(2):267--271, April 1965.

\bibitem{bobkov:22}
S.G. Bobkov.
\newblock Upper bounds for {Fisher} information.
\newblock {\em Electron. J. Probab.}, 27:1--44, 2022.

\bibitem{bobkov:15}
S.G. Bobkov and G.P. Chistyakov.
\newblock Entropy power inequality for the {R{\'e}nyi} entropy.
\newblock {\em IEEE Trans. Inform. Theory}, 61(2):708--714, February 2015.

\bibitem{bobkov:97}
S.G. Bobkov and C.~Houdr{\'e}.
\newblock {Isoperimetric constants for product probability measures}.
\newblock {\em Ann. Probab.}, 25(1):184 -- 205, January 1997.

\bibitem{bobkov:20b}
S.G. Bobkov and A.~Marsiglietti.
\newblock Entropic {CLT} for smoothed convolutions and associated entropy
  bounds.
\newblock {\em International Mathematics Research Notices},
  2020(21):8057--8080, February 2020.

\bibitem{bobkov:22b}
S.G. Bobkov, A.~Marsiglietti, and J.~Melbourne.
\newblock Concentration functions and entropy bounds for discrete log-concave
  distributions.
\newblock {\em Comb., Probab. Comput.}, 31(1):54--72, January 2022.

\bibitem{carlen:91}
E.A. Carlen and A.~Soffer.
\newblock Entropy production by block variable summation and central limit
  theorems.
\newblock {\em Comm. Math. Phys.}, 140(2):339--371, September 1991.

\bibitem{cheeger:71}
J.~Cheeger.
\newblock A lower bound for the smallest eigenvalue of the {Laplacian}.
\newblock In R.C. Gunning, editor, {\em Problems in Analysis}, pages 195--200.
  Princeton University Press, Princeton, NJ, 1971.

\bibitem{courtade:16b}
T.A. Courtade.
\newblock Monotonicity of entropy and {Fisher} information: {A} quick proof via
  maximal correlation.
\newblock {\em Commun. Inf. Syst.}, 16(2):111--115, Januray 2016.

\bibitem{courtade:18}
T.A. Courtade, M.~Fathi, and A.~Pananjady.
\newblock Quantitative stability of the entropy power inequality.
\newblock {\em IEEE Trans. Inform. Theory}, 64(8):5691--5703, August 2018.

\bibitem{cover:book2}
T.M. Cover and J.A. Thomas.
\newblock {\em Elements of information theory}.
\newblock John Wiley \& Sons, New York, NY, second edition, 2012.

\bibitem{dembo-cover-thomas:91}
A.~Dembo, T.M. Cover, and J.A. Thomas.
\newblock Information-theoretic inequalities.
\newblock {\em IEEE Trans. Inform. Theory}, 37(6):1501--1518, November 1991.

\bibitem{eldan:20}
R.~Eldan and D.~Mikulincer.
\newblock Stability of the {S}hannon-{S}tam inequality via the {F}\"{o}llmer
  process.
\newblock {\em Probab. Theory Related Fields}, 177(3-4):891--922, 2020.

\bibitem{fradelizi:97}
M.~Fradelizi.
\newblock Sections of convex bodies through their centroid.
\newblock {\em Archiv der Mathematik}, 69(6):515--522, December 1997.

\bibitem{fradelizi:arxiv24}
M.~Fradelizi, L.~Gavalakis, and M.~Rapaport.
\newblock On the monotonicity of discrete entropy for log-concave random
  vectors on {${\mathbb Z}^d$}.
\newblock {\em arXiv e-prints}, \texttt{2401.15462 [math.PR]}, January 2024.

\bibitem{gavalakis:24}
L.~Gavalakis.
\newblock Approximate discrete entropy monotonicity for log-concave sums.
\newblock {\em Comb., Probab. Comput.}, 33(2):196--209, March 2024.

\bibitem{gavalakis:clt}
L.~Gavalakis and I.~Kontoyiannis.
\newblock Entropy and the discrete central limit theorem.
\newblock {\em Stoch. Proc. Appl.}, 170:104294, June 2024.

\bibitem{gavalakis-doubling:24}
L.~Gavalakis, I.~Kontoyiannis, and M.~Madiman.
\newblock The entropic doubling constant and robustness of {Gaussian} codebooks
  for additive-noise channels.
\newblock {\em IEEE Trans. Inform. Theory}, 70(12):8467--8477, December 2024.

\bibitem{hero:04}
M.~Godavarti and A.~Hero.
\newblock Convergence of differential entropies.
\newblock {\em IEEE Trans. Inform. Theory}, 50(1):171--176, January 2004.

\bibitem{GGMT:25}
W.T. Gowers, B.~Green, F.~Manners, and T.~Tao.
\newblock On a conjecture of {M}arton.
\newblock {\em Ann. of Math.}, 201(2):515--549, March 2025.

\bibitem{green:25}
B.~Green, F.~Manners, and T.~Tao.
\newblock Sumsets and entropy revisited.
\newblock {\em Random Struct. Algorithms}, 66(1):e21252, 2025.

\bibitem{guan:24}
Q.~Guan.
\newblock A note on {B}ourgain's slicing problem.
\newblock {\em arXiv e-prints}, \texttt{2412.09075 [math.MG]}, December 2024.

\bibitem{johnson-barron:04}
O.~Johnson and A.R. Barron.
\newblock Fisher information inequalities and the central limit theorem.
\newblock {\em Probab. Theory Related Fields}, 129(3):391--409, July 2004.

\bibitem{KV:83}
V.A. Kaimanovich and A.M. Vershik.
\newblock Random walks on discrete groups: {B}oundary and entropy.
\newblock {\em Ann. Probab.}, 11(3):457--490, August 1983.

\bibitem{KLS:95}
R.~Kannan, L.~Lov{\'a}sz, and M.~Simonovits.
\newblock Isoperimetric problems for convex bodies and a localization lemma.
\newblock {\em Discrete Comput. Geom.}, 13:541--559, 1995.

\bibitem{klartag:24b}
B.~Klartag.
\newblock Logarithmic bounds for isoperimetry and slices of convex sets.
\newblock {\em Ars Inven. Anal.}, pages Paper No. 4, 17, 2023.

\bibitem{klartag:24}
B.~Klartag and J.~Lehec.
\newblock Affirmative resolution of {Bourgain's} slicing problem using {Guan's}
  bound.
\newblock {\em arXiv e-prints}, \texttt{2412.15044 [math.MG]}, December 2024.

\bibitem{klartag:25}
B.~Klartag and O.~Ordentlich.
\newblock The strong data processing inequality under the heat flow.
\newblock {\em IEEE Trans. Inform. Theory}, 71(5):3317--3333, May 2025.

\bibitem{KM:14}
I.~Kontoyiannis and M.~Madiman.
\newblock Sumset and inverse sumset inequalities for differential entropy and
  mutual information.
\newblock {\em IEEE Trans. Inform. Theory}, 60(8):4503--4514, August 2014.

\bibitem{lehec:13}
J.~Lehec.
\newblock Representation formula for the entropy and functional inequalities.
\newblock {\em Ann. Inst. Henri Poincar\'e}, 49(3):885--899, August 2013.

\bibitem{gavalakis-rupert:arxiv}
R.~Li, L.~Gavalakis, and Kontoyiannis.
\newblock Entropic additive energy and entropy inequalities for sums and
  products.
\newblock {\em arXiv e-prints}, \texttt{2506.20813 [cs.IT]}, June 2025.

\bibitem{lieb:78}
E.H. Lieb.
\newblock Proof of an entropy conjecture of {Wehrl}.
\newblock {\em Comm. Math. Phys.}, 62(1):35--41, August 1978.

\bibitem{madiman-itw:08}
M.~Madiman.
\newblock On the entropy of sums.
\newblock In {\em 2008 IEEE Workshop on Information Theory (ITW)}, pages
  303--307, May 2008.

\bibitem{madiman:07}
M.~Madiman and A.R Barron.
\newblock Generalized entropy power inequalities and monotonicity properties of
  information.
\newblock {\em IEEE Trans. Inform. Theory}, 53(7):2317--2329, July 2007.

\bibitem{KM:16}
M.~Madiman and I.~Kontoyiannis.
\newblock Entropy bounds on abelian groups and the {Ruzsa} divergence.
\newblock {\em IEEE Trans. Inform. Theory}, 64(1):77--92, January 2018.

\bibitem{pinsker:book}
M.S. Pinsker.
\newblock {\em Information and information stability of random variables and
  processes}.
\newblock Holden-Day, San Fransisco, 1964.

\bibitem{rioul:07}
O.~Rioul.
\newblock A simple proof of the entropy-power inequality via properties of
  mutual information.
\newblock In {\em 2007 IEEE International Symposium on Information Theory
  (ISIT)}, pages 46--50, Nice, France, June 2007.

\bibitem{rioul:11}
O.~Rioul.
\newblock Information theoretic proofs of entropy power inequalities.
\newblock {\em IEEE Trans. Inform. Theory}, 57(1):33--55, January 2011.

\bibitem{rioul:17b}
O.~Rioul.
\newblock Optimal transportation to the entropy-power inequality.
\newblock In {\em Workshop on Information Theory and Applications (ITA)}, UCSD,
  San Diego, CA, February 2017.

\bibitem{rioul:17}
O.~Rioul.
\newblock Yet another proof of the entropy power inequality.
\newblock {\em IEEE Trans. Inform. Theory}, 63(6):3595--3599, June 2017.

\bibitem{shannon:48}
C.E. Shannon.
\newblock A mathematical theory of communication.
\newblock {\em Bell System Tech. J.}, 27(3):379--423, 623--656, 1948.

\bibitem{stam:59}
A.J. Stam.
\newblock Some inequalities satisfied by the quantities of information of
  {F}isher and {S}hannon.
\newblock {\em Inf. Contr.}, 2(2):101--112, 1959.

\bibitem{szarek:00}
S.J. Szarek and D.~Voiculescu.
\newblock Shannon's entropy power inequality via restricted {Minkowski} sums.
\newblock In V.D. Milman and G.~Schechtman, editors, {\em Geometric Aspects of
  Functional Analysis: Israel Seminar 1996-2000}, pages 257--262. Springer,
  Berlin, Heidelberg, 2000.

\bibitem{tao:10}
T.~Tao.
\newblock Sumset and inverse sumset theory for {Shannon} entropy.
\newblock {\em Comb., Probab. Comput.}, 19(4):603--639, July 2010.

\bibitem{verdu:06}
S.~Verd{\'u} and D.~Guo.
\newblock A simple proof of the entropy-power inequality.
\newblock {\em IEEE Trans. Inform. Theory}, 52(5):2165--2166, May 2006.

\end{thebibliography}

\end{document}